\documentclass{article}

\usepackage{amsmath}
\usepackage{amssymb}
\usepackage{amsthm}

\newtheorem{lemma}{Lemma}
\newtheorem{theorem}{Theorem}
\newtheorem{corollary}{Corollary}
\newtheorem{proposition}{Proposition}
\newtheorem{definition}{Definition}
\newtheorem*{proof of theorem 1}{Proof of Theorem $1$}
\newtheorem*{proof of theorem 3}{Proof of Theorem $3$}
\newtheorem*{proof of theorem 4}{Proof of Theorem $4$}
\newtheorem{remark}{Remark}
\newtheorem*{theorem a}{Theorem A}
\newtheorem*{corollary a}{corollary A}

\title{The local Morse Homology of the critical points in the Lagrange problem}
\author{Xiuting Tang\\fztxt@126.com}

\begin{document}

\maketitle

\begin{abstract}
In this paper, we construct local Morse homology in a new way and compute the local Morse homology of the critical points of the Lagrange problem. As a corollary, we prove for the first time that each of the linear critical points is either a saddle point or a degenerate critical point compared to the previous conclusion that if all the linear critical points are non-degenerate, they are saddle points.
\end{abstract}



\section{Introduction}





The Lagrange problem is the problem of two fixed centers adding an elastic force acting from the midpoint of the two masses. It was first observed by Lagrange \cite{Lagrange} that this problem is integrable. We refer to \cite{Hiltebeitel} for a comprehensive treatment which forces one can add to the problem of two fixed centers while still keeping the problem completely integrable. 
Setting the two fixed centers as $e=(-\frac{1}{2},0)$ and $m=(\frac{1}{2},0)$, the Hamiltonian of the Lagrange problem is

\begin{equation}H(q,p)=T(p)-U(q).\end{equation}\label{H of Lagrange}
where
$$T(p)=\frac{1}{2}|p|^2.$$
$$U:\mathbb{R}^2\backslash\{(\pm\frac{1}{2},0)\}\rightarrow \mathbb{R},
q\rightarrow \frac{m_1}{\sqrt{(q_1+\frac{1}{2})^2+q_2^2}}+\frac{m_2}{\sqrt{(q_1-\frac{1}{2})^2+q_2^2}}+\frac{\epsilon}{2}|q|^2,$$
$p=(p_1,p_2)^T, q=(q_1,q_2)^T$
and $m_1,m_2,\epsilon\in \mathbb{R}^+$.

If we set $\epsilon=1$ and $m_1=m_2=\frac{1}{2}$, then the Lagrange problem has the same potential energy as the circular restricted three body problem in the rotating coordinate, and the whole system is the circular restricted three body problem subtracting a Coriolis force. So the Lagrange problem reflect some information about the circular restricted three body problem near the boundary of the Hill's region.

In \cite{Tang}, we studied some properties of the Lagrange problem and found that it has five critical points $\{l_i| i=1, 2, 3,4,5\}$, where $l_4, l_5$ are maxima by direct computations.

In this paper we construct local Morse homology in a new way and compute the local Morse homology of these critical points. Compared to the conclusion in \cite{Tang} that if all the linear critical points are non-degenerate, they are saddle points, we prove for the first time that each of the linear critical points is either a saddle point or a degenerate critical point as a corollary.

\begin{theorem a}
For the collinear critical points $l_i(i=1, 2, 3)$ of the Lagrange problem with $m_1,m_2,\epsilon\in \mathbb{R}^+$, we have
$$HM^{loc}_*(l_i)=\left\{
\begin{aligned}
&\mathbb{Z}_2, *=1;\\
&\{0\}, otherwise.
\end{aligned}
\right.$$
For the maxima $l_i(i=4,5)$, we have
$$HM^{loc}_*(l_i)=\left\{
\begin{aligned}
&\mathbb{Z}_2, *=2;\\
&\{0\}, otherwise.
\end{aligned}
\right.$$
\end{theorem a}

\begin{corollary a}\label{saddle}
Each of the three critical points of the Lagrange problem with $m_1,m_2,\epsilon\in\mathbb{R}^+$ on the x-axis is either a saddle point or a degenerate critical point.
\end{corollary a}

This paper is organized as follows. In section 2, we construct local Morse homology completely in a new way. In section 2.1, we prove that the gradient flow lines behave locally. In section 2.2, we prove the weak compactness and Floer-Gromov convergence of the Moduli space of gradient flow lines similar as in the Morse homology. In section 2.3, we consider the transversality and prove that the moduli space of gradient flow lines is a manifold similar as in the Morse homolgy. In section 2.4, we give the well defined definition of local Morse homolgy. In section 2.5, we prove the invariance of local Morse homology with respect to the Morse-Smale pairs and small neighborhood we choose by considering the time dependent flow lines. In section 3, we give the local Morse homology of the critical points in the Lagrange problem and prove that each of the linear critical points is either a saddle point or a degenerate critical point for the first time.

\section{Local Morse homology}

\subsection{Introduction of  local Morse homology}

The constructions of local Morse homology and Floer homology go back to the original work of Floer(see, e.g.,\cite{Floer, Floer1}).
We can also refer to the constructions in \cite{Ginzburg} by Ginzburg.

There are some other researches about local Morse homology so far as we know. In \cite{Rot}, Rot and Vandervorst construct Morse-Conley-Floer homology using Conley's isolating blocks and Lyapunov functions. In \cite{Hein}, D. Hein, U. Hryniewicz, L. Macarini studied the transversality for local Morse homology with symmetries.

In \cite{Audin}, Mich\`ele Audin and Mihai Damian construct Morse homology from a modern point of view, and then naturally go to Floer homology. In this paper, we construct local Morse homolgy defined in the following from this modern point of view and then use it to study critical points in the Lagrange problem. 

We consider a smooth function $\tilde f$ on a Riemannian manifold $(M,g)$ with an isolated critical point $x_0$. Within a small neighborhood $B$ of the critical point, we perturb $\tilde f$ a little to get a Morse function $f$, in the progress of perturbation, there will be some critical points of $f$ appearing and disappearing in $B_\delta$. We then define the $k$-th local Morse Complex as the vector space of these critical points with Morse index k. There exits a gradient flow line of the equation
\begin{equation}
\partial_s x(s)=-\nabla f(x(s))
\end{equation}
connecting the $k$-th and $(k-1)$-th critical point.
Here we face the difficulty not exiting in the construction of Morse homology that we should insure the gradient flow lines behave locally, namely, it can not go out of a neighborhood $B$ of the critical point $x_0$. This can be proved by the energy analysis in Lemma \ref{lemma 2.2} in Section \ref{section 2.1}.

As in Morse homology, using the moduli space of unparametrised gradient flow lines
\begin{equation}\label{M}
\mathcal{M}(f, g; x^-, x^+):= \tilde{\mathcal{M}}(f, g; x^-, x^+)/\mathbb{R},
\end{equation}
we can define the boundary operator as
$$\partial: CM^{loc}_k(x_0,\delta, f, g)\rightarrow CM^{loc}_{k-1}(x_0, \delta, f, g),$$
$$\partial x^-=\sum_{\mu(x^-)=\mu(x^+)+1} \#_2\mathcal{M}(f, g; x^-, x^+) x^+.$$
Define local Morse homology as
$$HM^{loc}=\ker \partial/\text{Im} \partial.$$
To make this definition well defined, we should prove
$\partial^2=0.$ 
The strategy is to prove $\mathcal{M}(f, g; x^-, x^+)$ is a compact manifold for a generic choice of the Riemannian metric similarly as in the Morse homology.

The final step of the construction is 
to prove the invariance of local Morse homology with respect to Morse-Smale pairs $(f,g)$. It has much more difficulties not exiting in the construction of Morse homology. 
The core of the proof is to ensure that the time dependent gradient flow lines behave locally. Inspired by \cite{Frauenfelder}, we construct a specific homotopy of Morse functions and prove the local behavior by estimating the energy of time dependent gradient flow lines.

Finally, we construct local Morse homology which do not depend on the choice of the Morse-Smale paire and the small neighborhood we choose. 

In Theorem \ref{Homotopy invariance} in section $\ref{section 2.5}$, we prove the Homotopy invariance of local Morse homology, which is important in calculating the local Morse Homology of the critical points in the Lagrange problem.

\subsection{Local behavior of the gradient flow lines near critical points}\label{section 2.1}
$(M, g)$ is a Riemannian manifold,  $\tilde f: M \rightarrow \mathbb{R}$ is a smooth function on $M$. Let $x_0$ be an isolated critical point of $\tilde f$, i.e. $\nabla \tilde f(x_0)=0$. 


Define the neighborhood  $B_{\delta}=\{x\in M| ||x-x_0||_g
\leq \delta, \delta>0\}$ of $x_0$ and $$\Delta(\tilde f,x_0)=\{\delta| x_0 \text{ is the unique critical point of } \tilde f \text{ in } B_{2\delta}\}.$$
Choose $\delta \in \Delta$, by perturbing $\tilde f|_{B_{\delta}}$ a little bit, we can get a Morse function $f$ on $B_\delta$ for $f|_{M\backslash B_{\delta}}=\tilde f|_{M\backslash B_{\delta}}$ and the critical points of $f$ can only be born in $B_{\delta}$. Assume $x^-$ and $x^+$ are two critical points of $f$ and the difference of their Morse index $\mu$ is $1$, i.e, $\mu (x^-)-\mu(x^+)=1$, then there exits a gradient flow line of the equation
\begin{equation}\label{equ1}
\partial_s x(s)=-\nabla f(x(s))
\end{equation}
connecting $x^-$ and $x^+$, i.e. $x(s)\rightarrow x^{\pm}$ when $s\rightarrow \pm \infty$.

We claim in Lemma \ref{lemma 1.2} below that  by perturbing $\tilde f$ small enough, we can get the Morse function $f$ such that the gradient flow lines of equation (\ref{equ1}) can not go out of the neighborhood $B_{2\delta}$ of $x_0$.

Let $$\mathcal{F}=\{ f\in L^{\infty}(M, \mathbb{R})|  f| _{M\backslash B_\delta}=\tilde f|_{M\backslash B_\delta}\},$$
$$\mathcal{F}_{Morse}=\{f\in \mathcal{F}: f|_{B_\delta} \text{ is a Morse function}\}.$$
We know that $\mathcal{F}_{Morse}$ is dense in $\mathcal{F}$,
thus there exists a series of $f_\nu \in \mathcal{F}_{Morse}, \nu\in \mathbb{N}^*$, \begin{equation}\label{f lim}
f_\nu\overset{C^{\infty}}{\longrightarrow} \tilde f.
\end{equation}
In order to prove Lemma \ref{lemma 1.2}, we need Lemma \ref{lemma 1.1} below.
\begin{lemma}\label{lemma 1.1}
For any $\eta>0$, there exists $\nu_0$, such that when $\nu>\nu_0$, we have $\max f_\nu|_{C_\nu}-\min f_\nu|_{C_\nu}<\eta$, where $C_\nu=\{x\in B_\delta|\nabla f_\nu (x)=0\}$.
\end{lemma}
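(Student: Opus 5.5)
Since $f_\nu\to\tilde f$ in $C^\infty$ and $x_0$ is the only critical point of $\tilde f$ in $B_{2\delta}\supset B_\delta$, the plan is to show that the critical sets $C_\nu$ collapse onto $x_0$. Continuity then makes the values of $f_\nu$ on $C_\nu$ all close to $\tilde f(x_0)$. Concretely, I will prove two facts. First, for every $r>0$ there is $\nu_1$ such that $C_\nu\subset B_r(x_0)$ for $\nu>\nu_1$. Second, $\sup_{B_\delta}|f_\nu-\tilde f|\to0$. Granting both, for any $x,y\in C_\nu$,
$$|f_\nu(x)-f_\nu(y)|\le 2\sup_{B_\delta}|f_\nu-\tilde f| + |\tilde f(x)-\tilde f(x_0)|+|\tilde f(y)-\tilde f(x_0)|,$$
and each term can be made smaller than $\eta/4$. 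This gives $\max f_\nu|_{C_\nu}-\min f_\nu|_{C_\nu}<\eta$. Note that $C_\nu$ is compact, since it is closed in the compact set $B_\delta$ (working in a chart or with $\delta$ below the injectivity radius), so the max and min exist. If $C_\nu=\emptyset$, the statement is vacuous.

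The key step is the first fact, which I prove by contradiction. Suppose there are $r>0$, a subsequence $\nu_k\to\infty$, and points $x_k\in C_{\nu_k}$ with $\|x_k-x_0\|\ge r$. By compactness of $B_\delta$, pass to a further subsequence with $x_k\to x_*\in B_\delta$ and $\|x_*-x_0\|\ge r$. The $C^1$ part of the convergence gives $\nabla f_{\nu_k}\to\nabla\tilde f$ uniformly on $B_\delta$. Hence
$$|\nabla\tilde f(x_*)|\le|\nabla\tilde f(x_*)-\nabla\tilde f(x_k)|+|\nabla\tilde f(x_k)-\nabla f_{\nu_k}(x_k)|\to0.$$
So $x_*$ is a critical point of $\tilde f$ in $B_\delta\subset B_{2\delta}$ different from $x_0$, which contradicts $\delta\in\Delta(\tilde f,x_0)$. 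The second fact is just the $C^0$ part of (\ref{f lim}) on the compact set $B_\delta$.

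For the choice of $r$, I use uniform continuity of $\tilde f$ at $x_0$: pick $r$ so that $|\tilde f(x)-\tilde f(x_0)|<\eta/4$ whenever $\|x-x_0\|<r$. Then pick $\nu_0\ge\nu_1(r)$ large enough that $\sup_{B_\delta}|f_\nu-\tilde f|<\eta/4$.

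The main obstacle is the interpretation of the convergence (\ref{f lim}). The argument needs uniform $C^1$ convergence on the compact set $B_\delta$, i.e. $C^\infty$ convergence taken on compacta, even though $\mathcal F$ is stated in $L^\infty$. I will state this reading explicitly. The other delicate point is making sure limits of critical points stay in the region where $x_0$ is the unique critical point; this holds because $B_\delta$ is closed and sits inside $B_{2\delta}$.
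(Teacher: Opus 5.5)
Your proof is correct and rests on the same idea as the paper's: by compactness of $B_\delta$ and $C^1$ convergence, any limit of critical points of $f_\nu$ is a critical point of $\tilde f$ in $B_\delta\subset B_{2\delta}$. It therefore equals $x_0$, and uniform convergence then forces the critical values to cluster. The paper runs this as a single contradiction argument on two critical points whose values differ by at least $\eta_0$; you first show that $C_\nu$ collapses onto $x_0$ and then estimate directly, which also makes explicit the subsequence extraction and the uniform convergence that the paper uses only implicitly.
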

\begin{proof}

We assume the contradiction of the lemma, then there exists $\eta_0$,  such that for any $\nu\in \mathbb{N}$, $\max f_\nu|_{C_\nu}-\min f_\nu|_{C_\nu}\ge\eta_0$. Since $\#C_\nu<\infty$, there exist $x_\nu^1, x_\nu^2\in C_\nu, \nu\in \mathbb{N}^*$, such that $f_\nu (x_\nu^1)-f_\nu (x_\nu^2)\ge\eta_0$. Since $x_\nu^1, x_\nu^2$ are in the bounded regin $B_\delta$, there exist convergent subsequence $x^1_{\nu_j}\rightarrow x^1, x^2_{\nu_j}\rightarrow x^2, j\in \mathbb{N}^*$. As a result, $\tilde f(x^1)-\tilde f(x^2)\ge\eta_0$, this implies $\tilde f(x^1)\neq \tilde f(x^2)$ . By (\ref{f lim}), we know that $x^1$ and $x^2$ are critical points of $\tilde f$, i.e. $\tilde f(x_1)=\tilde f(x_2)=0$. This contradicts with the fact that $\tilde f$ has an unique critical point in $B_\delta$.
\end{proof}

\begin{lemma}\label{lemma 1.2}
For any $\delta\in\Delta$, there exists $f\in \mathcal{F}_{Morse}$ such that $x(s)\in B_{2\delta}, \forall s\in \mathbb{R}$.
\end{lemma}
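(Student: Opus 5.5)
We take $f=f_\nu$ with $\nu$ large, and $x(s)$ any gradient flow line of (\ref{equ1}) between two critical points $x^\pm\in C_\nu$. The proof is an energy argument: the flow line has small total energy by Lemma \ref{lemma 1.1}, but leaving $B_{2\delta}$ would cost a definite amount of energy, because $\tilde f$ has no critical points in the annulus $A=B_{2\delta}\setminus \mathrm{int}\,B_\delta$. Recall that $f_\nu=\tilde f$ outside $B_\delta$.

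\textbf{Step 1 (energy identity).} Along a flow line,
$$E(x)=\int_{-\infty}^{\infty}|\partial_s x|^2\,ds=\int_{-\infty}^{\infty}|\nabla f(x(s))|^2\,ds=f(x^-)-f(x^+).$$
Both endpoints lie in $C_\nu$. By Lemma \ref{lemma 1.1}, for any $\eta>0$ there is $\nu_0$ such that $E(x)<\eta$ whenever $\nu>\nu_0$.

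\textbf{Step 2 (lower bound for escaping lines).} Since $\delta\in\Delta$, the point $x_0$ is the only critical point of $\tilde f$ in $B_{2\delta}$. Hence $|\nabla\tilde f|$ has a positive minimum $c$ on the compact annulus $A$ (shrinking slightly inward if one prefers to avoid boundary issues). Suppose $x(s)\notin B_{2\delta}$ for some $s$. The endpoints $x^\pm$ lie in $B_\delta$, so the curve must cross $A$. There is an interval $[s_1,s_2]$ on which $x$ stays in $A$ and runs from $\partial B_\delta$ to $\partial B_{2\delta}$, so the length of $x|_{[s_1,s_2]}$ is at least $\delta$. On $A$ we have $|\nabla f|=|\nabla\tilde f|\ge c$. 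Using $|\partial_s x|=|\nabla f|$, this gives
$$E(x)\ge\int_{s_1}^{s_2}|\nabla f|\,|\partial_s x|\,ds\ge c\int_{s_1}^{s_2}|\partial_s x|\,ds\ge c\,\delta.$$
(When $M$ is not Euclidean, replace the norm distance by the Riemannian distance and use the triangle inequality.)

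\textbf{Step 3 (conclusion).} Choose $\eta=c\delta$ and $\nu>\nu_0(\eta)$. Then Steps 1 and 2 contradict each other, so every flow line of $f=f_\nu$ stays in $B_{2\delta}$, and this $f$ lies in $\mathcal F_{Morse}$.

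The main subtlety is Step 2. It needs the uniform lower bound on $|\nabla f_\nu|$ on $A$, and the fact that $f_\nu=\tilde f$ there gives this directly, independent of $\nu$. If instead one only has agreement outside a slightly larger set, the $C^\infty$ convergence (\ref{f lim}) still gives $|\nabla f_\nu|\ge c/2$ on $A$ for large $\nu$.
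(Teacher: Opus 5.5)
Your proof is correct and follows the paper's argument. The energy identity $E(x)=f(x^-)-f(x^+)$ combined with Lemma~\ref{lemma 1.1} makes the energy smaller than any prescribed $\eta$, while crossing the annulus $B_{2\delta}\setminus B_\delta$ (where $f=\tilde f$ has gradient bounded below) forces a definite amount of energy; the only cosmetic difference is that the paper counts two crossings (out and back) to get $2\epsilon\delta$, whereas you use a single crossing plus the triangle inequality to get $c\delta$, which suffices equally well.
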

\begin{proof}
Assume the contradiction, then for any Morse function $f(x)$, the flow should go out of $B_{2\delta}$ from $x^-$ and finally come back to $x^+$. Denoting $A_\delta=B_{2\delta}\backslash B_\delta$, then there should be at least twice the time that the flow go across $A_\delta$. Let the first time interval be $[t_0^-, t_0^+]$ and the last time interval $[t_1^-, t_1^+]$, then
\begin{equation}
\begin{aligned}
E(x)&=\int_{-\infty}^{+\infty}||\partial_s x(s)||^2ds \\
&\geq \int_{x_0^-}^{x_0^+}||\partial_s x(s)||^2ds+\int_{x_1^-}^{x_1^+}||\partial_s x(s)||^2ds.
\end{aligned}
\end{equation}
There exists $\epsilon>0$, such that $||\nabla f|_{A_\delta}||\ge \epsilon$. Since $\partial_s x(s)=-\nabla f|_{A_\delta}=-\nabla \tilde f|_{A_\delta}$, we have $||\partial_s x(s)||\ge \epsilon, t\in[t_0^-, t_0^+]\cup [t_1^-,t_1^+]$. As a result,

\begin{equation}\label{E1}
\begin{aligned}
E(x)&\geq  \epsilon \big( \int_{x_0^-}^{x_0^+}||\partial_s x(s)||ds+\int_{x_1^-}^{x_1^+}||\partial_s x(s)||ds \big)\\
&\geq 2\epsilon\delta.
\end{aligned}
\end{equation}
We can also calculate the energy $E(x)$ as follows.
\begin{equation}
\begin{aligned}
E(x)&=\int_{-\infty}^{+\infty}||\partial_s x(s)||^2ds\\
&=\int_{-\infty}^{+\infty}g(\partial_s x,\partial_s x)ds\\
&=\int_{-\infty}^{+\infty}g(\nabla f(x), \nabla f(x))ds\\
&=-\int_{-\infty}^{+\infty}\frac{d}{ds} f(x(s))ds\\
&=f(x^-)-f(x^+).
\end{aligned}
\end{equation}
Fix $\eta=\epsilon\delta$ in Lemma \ref{lemma 1.1}, we get a series of Morse functions $f_\nu (\nu>\nu_0)$ such that $\max{f_\nu}|_{C_\nu}-\min{f_\nu}|_{C_\nu}<\epsilon\delta$. Just choose a value of $\nu$ in $\nu>\nu_0$ and let $ f= f_\nu$, then

\begin{equation}\label{E2}
E(x)\leq \max f_\nu|_{C_\nu}-\min f_\nu|_{C_\nu}< \epsilon\delta.
\end{equation}
This contradicts (\ref{E1}).

\end{proof}

Define the set of Morse functions found in Lemma \ref{lemma 1.2} as $\mathcal{F}_{Morse}^{\delta}$ and

$$\mathcal{F}_{Morse}^{loc}=\bigcup_{\delta\in \Delta}\mathcal{F}_{Morse}^{\delta}.$$

\subsection{Compactness of the Moduli space of the gradient flow lines}

Let $x^+, x^-\in \text{crit} f$, $f\in \mathcal{F}_{Morse}^{\delta}$ and $g$ is a Riemannian metric on $B_{2\delta}$.
Define the Moduli space of parametrised gradient flow lines from $x^-$ to $x^+$ below.
\begin{equation}
\begin{aligned}
&\tilde{\mathcal{M}}(f, g; x^-, x^+)\\:=&\{x(s)\in C^\infty(\mathbb{R}, B_{2\delta})| \partial_s x(s)+\nabla_g f(x(s))=0, s\in\mathbb{R}, \lim_{s\rightarrow \pm\infty}=x^{\pm}\}
\end{aligned}
\end{equation}
There is a reparametrization action on $\tilde{\mathcal{M}}(f, g; x^-,x^+)$. Let $r_*x(s)=x(s+r), r\in \mathbb{R}$, if $x(s)\in \tilde{\mathcal{M}}(f, g; x^-,x^+)$, then $r_*x(s)\in \tilde{\mathcal{M}}(f, g; x^-, x^+)$.

If $x^-=x^+$, then $\tilde {\mathcal{M}}(f, g; x^-, x^+)=\{x\}$, $x$ is a constant gradient flow line and $r_*x=x, \forall r\in \mathbb{R}$.

If $x^-\neq x^+$, then there is a free $\mathbb{R}$ action on $ \tilde{\mathcal{M}}(f, g; x^-, x^+)$. We define the moduli space of unparametrised gradient flow lines below.
\begin{equation}\label{M}
\mathcal{M}(f, g; x^-, x^+):= \tilde{\mathcal{M}}(f, g; x^-, x^+)/\mathbb{R}.
\end{equation}
We want to prove in this section that $\mathcal{M}(f, g; x^-, x^+)$ is a compact set.

\begin{proposition}\label{prop of weak compactness}(Weak compactness)
Let $x_\nu\in C^{\infty}(\mathbb{R}, B_{2\delta})(\nu\in \mathbb{N})$ be solutions of equation (\ref{equ1}), then there exist a subsequence $\nu_j$ and gradient flow line $x$, such that $$x_{\nu_j}\overset{C_{loc}^\infty}{\longrightarrow}x.$$ i.e. for any $R>0$,
$$x_{\nu_j}|_{[-R, R]}\overset{C^\infty}{\longrightarrow}x|_{[-R, R]}.$$
\end{proposition}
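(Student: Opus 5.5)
The plan is an Arzel\`a--Ascoli argument combined with bootstrapping through the gradient flow equation (\ref{equ1}). The key point is that every $x_\nu$ takes values in the compact set $B_{2\delta}$. Here I take $B_{2\delta}$ to be a closed bounded coordinate ball with compact closure in $M$; this is implicit in the choice $\delta\in\Delta$ for small $\delta$.

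\textbf{Step 1: uniform bounds on all derivatives.} Since $f$ is smooth and $B_{2\delta}$ is compact, the constant
\[
C_0:=\max_{B_{2\delta}}\|\nabla f\|
\]
is finite. From $\partial_s x_\nu=-\nabla f(x_\nu)$ we get $\|\partial_s x_\nu(s)\|\le C_0$ for all $s$ and $\nu$. Differentiating the equation gives
\[
\partial_s^2 x_\nu=-D(\nabla f)(x_\nu)\,\partial_s x_\nu .
\]
Inductively, $\partial_s^{k}x_\nu$ is a universal polynomial expression in the derivatives of $\nabla f$ up to order $k-1$, evaluated at $x_\nu$, and in the lower derivatives $\partial_s^j x_\nu$ with $j<k$. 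Smoothness of $f$ on the compact set $B_{2\delta}$ therefore gives constants $C_k$, independent of $\nu$ and $s$, with
\[
\|\partial_s^k x_\nu(s)\|\le C_k .
\]
Note that $C_0$ bounds the speed of the curve; it is unrelated to the constant $\epsilon$ of Lemma \ref{lemma 1.2}. This is routine.

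\textbf{Step 2: extracting a convergent subsequence.} Fix $R\in\mathbb{N}$. On $[-R,R]$ the family $\{x_\nu\}$ is uniformly bounded, since it lies in $B_{2\delta}$. By Step 1 it is also equicontinuous, and so is each family $\{\partial_s^k x_\nu\}$. Applying Arzel\`a--Ascoli successively for $k=0,1,2,\dots$ gives a subsequence converging in $C^k([-R,R])$. A Cantor diagonal argument over $k$ and over $R=1,2,3,\dots$ then produces a single subsequence $\nu_j$ and a limit $x\in C^\infty(\mathbb{R},B_{2\delta})$. The limit stays in $B_{2\delta}$ because this set is closed, and $x_{\nu_j}|_{[-R,R]}\to x|_{[-R,R]}$ in $C^\infty$ for every $R$.

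\textbf{Step 3: the limit is a gradient flow line.} Both sides of $\partial_s x_{\nu_j}=-\nabla f(x_{\nu_j})$ converge uniformly on compact sets, using the continuity of $\nabla f$. Hence $\partial_s x+\nabla f(x)=0$, so $x$ solves (\ref{equ1}).

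The only real subtlety is Step 1's reliance on compactness of the target. It is guaranteed here because the flow lines are assumed to lie in $B_{2\delta}$, which Lemma \ref{lemma 1.2} justifies. With that in hand, the rest is standard and no control of the asymptotics is needed for $C^\infty_{loc}$ convergence.
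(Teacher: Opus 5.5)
Your proof is correct and is essentially the paper's argument: compactness of $B_{2\delta}$ gives a uniform speed bound, Arzel\`a--Ascoli gives a convergent subsequence, and the flow equation controls all higher derivatives. The paper applies Arzel\`a--Ascoli only once, to get $C^0_{loc}$ convergence, and then upgrades convergence of that same subsequence inductively via $\partial_s^k x_{\nu_j}=F_k(x_{\nu_j},\dots,\partial_s^{k-1}x_{\nu_j})$ with $F_k$ continuous; you instead use uniform $C^k$ bounds for every $k$ plus a diagonal extraction, which is equally valid and only slightly more bookkeeping, and your Step 3 makes explicit that the limit solves the equation, which the paper leaves implicit.
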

\begin{proof}

Step 1:
Since $x(s)\in B_{2\delta}$ and $B_{2\delta}$ is compact, $||\nabla f(x_\nu)||_g$ is uniformly bounded. By (\ref{equ1}), $||\partial_s x_\nu (x)||_g$ is uniformly bounded, i.e. there exists $G>0$, such that $||\partial_s x_\nu(x)||_g\le G$. Then for any $\epsilon>0$, there exists ${\delta_0=\epsilon/G}$, when $|s_2-s_1|<\delta_0$, $$||x_\nu(s_2-s_1)||_g\leq ||x_\nu(s_\nu)||_g|s_2-s_1|\leq G|s_2-s_1|<G\delta_0\leq\epsilon,$$ where $s_1\leq s_\nu\leq s_2$, so $x_\nu$ is equicontinuous. By the theorem of Arzel\`{a}-Ascoli, there exist a subsequence $\nu_j$ and $x\in C^0(\mathbb{R}, M)$,
$$x_{\nu_j}\overset{C_{loc}^0}{\longrightarrow} x.$$

Step2:
Inductively.
When $x_{\nu_j}\overset{C_{loc}^0}{\longrightarrow} x$, we have
$$\nabla f(x_{\nu_j})\overset{C_{loc}^0}{\longrightarrow}\nabla f(x),$$ then $$\partial_s x_{\nu_j}(s)\rightarrow\partial_s x(s),$$ i.e. $$x_{\nu_j}\overset{C_{loc}^1}{\longrightarrow} x.$$
Let $s_0\in \mathbb{R}$, consider chart $V$ such that $x(s_0)\in V$, then there exists $\epsilon >0, j_0\in \mathbb{N}$, such that $x_{\nu_j}|_{[\epsilon_0-\epsilon,\epsilon_0+\epsilon]}\subset V, j\ge j_0$. Apply chain rule inductively to ODE $$\partial x_{\nu_j}(s)=-\nabla f(x_{\nu_j}(s)),$$
we get
$$\partial_s^k x_{\nu_j}=F_k(x_{\nu_j},\partial_s x_{\nu_j},...,\partial^{\nu-1}x_{\nu_j}),$$
$F_\nu$ is a continuous function with respect to $x_{\nu_j},\partial_s x_{\nu_j},...,\partial_s^{\nu-1}x_{\nu_j}$.
By $x_{\nu_j}\overset{C_{loc}^{\nu-1}}{\longrightarrow} x$, we get $x_{\nu_j}\overset{C_{loc}^\nu}{\longrightarrow}x$. Inductively, we get $x_{\nu_j}\overset{C_{loc}^\infty}{\longrightarrow}x$.
\end{proof}

\begin{definition}(Broken gradient flow line)
$x^{\pm}$ are critical points of the function $f$. A broken gradient flow line from $x^-$ to $x^+$ is a tupel
$$y=\{x^k\}_{1\le k \le n}, n\in \mathbb{N},$$
such that

(i) $x^k $ is a nonconstant gradient flow line, $1\le k \le n$.

(ii) $$\lim_{s\rightarrow -\infty} x^1(s)=x^-,$$
$$\lim_{s\rightarrow +\infty} x^k(s)=\lim_{s\rightarrow -\infty} x^{k+1}, 1\le k\le n-1,$$
$$\lim_{s\rightarrow +\infty} x^n(s)=x^+.$$
\end{definition}

\begin{definition}(Floer-Gromov convergence)
$x_\nu$ are a sequence of gradient flow lines, such that
$$\lim_{s\rightarrow +\infty} x_\nu(s)=x^{\pm}.$$
$y=\{x^\nu\}_{1\le k \le n}$ is a broken flow line from $x^-$ to $x^+$. $x_k$ is Floer-Gromov converges to $y$ if for $1\le k \le n$, there exists a sequence $\Gamma_{\nu}^k$, such that
$$(\Gamma_\nu^k)_* x_{\nu}\overset{C_{loc}^\infty}{\longrightarrow} x^k.$$
\end{definition}

\begin{theorem}\label{thm of Floer-Gromov compactness}(Floer-Gromov compactness)
If $f\in \mathcal{F}_{Morse}^{\delta}$ , $x_\nu$ is a sequence of gradient flow lines from $x^-$ to $x^+$, where $x^-$ is different from $x^+$. Then there exist $v_j$ and a broken gradient flow line $y=\{x^k\}_{1\le k\le n}$ from $x^-$ to $x^+$ such that $x_{\nu_j}$ Floer-Gromov converges to $y$.
\end{theorem}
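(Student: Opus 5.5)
The plan is the standard Morse-homology breaking argument, with Lemma \ref{lemma 1.2} supplying compactness of the ambient region. Since $f\in\mathcal{F}_{Morse}^{\delta}$, every gradient flow line stays in $B_{2\delta}$. The critical set $C=\text{crit} f\cap B_{2\delta}$ lies in $B_\delta$, because $\nabla f=\nabla\tilde f\neq 0$ on $B_{2\delta}\setminus B_\delta$ for $\delta\in\Delta$. As a finite set of nondegenerate critical points in the compact set $B_\delta$, $C$ is finite. I will order $C$ by the values of $f$ and fix $\rho>0$ so small that the closed balls $B_\rho(c)$, $c\in C$, are pairwise disjoint. There is then $\kappa>0$ with $\|\nabla f\|\ge\kappa$ on $B_{2\delta}\setminus\bigcup_c B_\rho(c)$. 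Each flow line $x_\nu$ has energy $E(x_\nu)=f(x^-)-f(x^+)$, so the energy is uniformly bounded.

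\textbf{First piece.} Let $c_0=f(x^-)-\rho'$, where $\rho'>0$ is small enough that every $c\in C$ other than $x^-$ satisfies $f(c)<c_0$, and that the set $\{f\geq c_0\}$ meets $B_\rho(x^-)$ only in the connected component containing $x^-$. Since $f(x_\nu(s))$ decreases strictly from $f(x^-)$ to $f(x^+)<c_0$, there is a unique $s_\nu$ with $f(x_\nu(s_\nu))=c_0$. Set $(\Gamma^1_\nu)_*x_\nu=x_\nu(\cdot+s_\nu)$. Proposition \ref{prop of weak compactness} gives a subsequence converging in $C^\infty_{loc}$ to a flow line $x^1$ with $f(x^1(0))=c_0$, so $x^1$ is nonconstant. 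It has finite energy and stays in the compact set $B_{2\delta}$, so its limits at $\pm\infty$ exist. By the standard argument, $\|\nabla f(x^1(s))\|\to0$ and the $\omega$-limit set is a connected subset of the finite set $C$, hence a single point. Next I show $\lim_{s\to-\infty}x^1=x^-$. For $s<0$, $f(x^1(s))\ge c_0$ and $f(x^1(s))\le f(x^-)$, so the backward limit is a critical point with value in $[c_0,f(x^-)]$. By the choice of $c_0$, this point can only be $x^-$.

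\textbf{Induction.} Let $y^1=\lim_{s\to+\infty}x^1(s)$. If $y^1=x^+$, we are done. Otherwise $f(y^1)<f(x^-)$, and I repeat the construction with a level $c_1$ slightly below $f(y^1)$, choosing the times $s^2_\nu>s_\nu$ with $f(x_\nu(s^2_\nu))=c_1$. The new limit $x^2$ must start at $y^1$, and this is the main obstacle: showing that the pieces glue, i.e.\ that no critical point is skipped and no energy is lost between $x^1$ and $x^2$. I would argue as in Audin--Damian. For large $\nu$, $x_\nu$ enters $B_\rho(y^1)$ following $x^1$. On the portion of $x_\nu$ between leaving a neighborhood of $y^1$ and reaching level $c_1$, the function $f$ drops by at most about $f(y^1)-c_1$. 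By the gradient lower bound $\kappa$, the time spent outside $\bigcup B_\rho(c)$ during this portion is bounded. So the only way $x_\nu$ can pass near another critical point $c'$ is if $f(c')\in(c_1,f(y^1))$, which is excluded by taking $c_1$ close to $f(y^1)$. Hence $x_\nu$ leaves $B_\rho(y^1)$ and reaches $\{f=c_1\}$ in uniformly bounded time, and the $C^\infty_{loc}$ limit $x^2$ has backward limit $y^1$. If the distances were not uniformly bounded, a diagonal limit would produce a flow line with both ends at $y^1$ and positive energy, which is impossible since $f$ strictly decreases along nonconstant flow lines.

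\textbf{Termination.} Each step strictly decreases the critical value $f(y^k)$ and $C$ is finite, so the process ends after at most $\#C$ steps, at a critical point $y^n$. It remains to show $y^n=x^+$. The final piece $x^n$ has a forward limit. If that limit were not $x^+$, the same gluing argument applied at the end, with $x_\nu(s)\to x^+$ and $f(x^+)$ fixed, would yield a further nonconstant piece. So the construction stops exactly at $x^+$, and after passing to a diagonal subsequence $\nu_j$, $x_{\nu_j}$ Floer--Gromov converges to $y=\{x^k\}$.
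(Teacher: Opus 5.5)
Your level-set normalization identifies the ends of each limit piece only through its critical value. That silently assumes distinct critical points of $f$ have distinct critical values, and nothing in $\mathcal{F}_{Morse}^{\delta}$ guarantees this. Under that extra assumption your scheme would be a valid alternative to the paper's; without it, several steps fail.
\begin{itemize}
\item Your requirement that every $c\in C\setminus\{x^-\}$ satisfy $f(c)<c_0$ cannot be met whenever some critical point lies at or above the level $f(x^-)$. In the local setting that is typical, for instance a saddle $x^-$ next to a maximum created by the perturbation.
\item The most you can arrange is that no critical value lies in $[c_0,f(x^-))$. Then the step ``the backward limit has value in $[c_0,f(x^-)]$, hence is $x^-$'' breaks down as soon as some $c'\neq x^-$ has $f(c')=f(x^-)$.
\item The same hole reappears in the induction: you only exclude $f(c')\in(c_1,f(y^1))$, an open interval, and ordering $C$ by values presupposes they are distinct.
\item It also reappears at termination. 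If the scheme reaches some $y^k\neq x^+$ with $f(y^k)=f(x^+)$, then no level below $f(y^k)$ is ever attained by $x_\nu$. The construction stalls instead of yielding a further piece.
\item Your inference from ``bounded time outside $\bigcup_c B_\rho(c)$'' to ``$x_\nu$ cannot pass near another critical point'' is not valid. A time bound does not control where the curve goes; a length or energy bound does.
\end{itemize}

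The missing ingredient is localization in space rather than in level. The paper gets it by shifting at first exit times from an isolating neighbourhood $V$ with $\overline V\cap\text{crit} f$ a single point. Then $(\Gamma^1_{\nu})_*x_{\nu}(s)\in\overline V$ for $s\le 0$, which pins the backward end of $x^1$ to $x^-$. Likewise, the exit times $R_j\to\infty$ from the neighbourhood of $(x^m)^+$ pin the backward end of $x^{m+1}$, with no reference to critical values.

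You can keep your levels if you add the energy--distance estimate used in Lemma \ref{lemma 1.2}. Let $\kappa>0$ bound $\|\nabla f\|$ from below off $\bigcup_c B_{\rho/2}(c)$. Then every flow segment from $\overline{B_{\rho/2}(c)}$ to $\partial B_\rho(c)$ has energy at least $\kappa\rho/2$.
\begin{itemize}
\item First piece: choose $\rho'<\kappa\rho/2$. Then $x_\nu$ stays in $B_\rho(x^-)$ on $(-\infty,s_\nu]$.
\item Induction: choose $f(y^k)-c_k<\kappa\rho/4$. Take a time $t_\nu$ with $x_\nu(t_\nu)\in B_{\rho/2}(y^k)$ and $f(x_\nu(t_\nu))<f(y^k)+\kappa\rho/4$. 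Then $x_\nu$ stays in $B_\rho(y^k)$ on $[t_\nu,s^{k+1}_\nu]$.
\item Since $s^{k+1}_\nu-t_\nu\to\infty$, the limit $x^{k+1}$ starts at $y^k$. It is this divergence that does the gluing, not a uniform time bound; the time spent in $B_\rho(y^k)$ necessarily blows up.
\item The same estimate shows $f(y^k)>f(x^+)$ whenever $y^k\neq x^+$, so the scheme cannot stall.
\end{itemize}
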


If this theorem is true, then $\mathcal{M}(f, g; x^-, x^+)$ defined in (\ref{M}) is a compact set. In order to prove this theorem, we need Lemma \ref{lemma 2.1} and Lemma \ref{lemma 2.2} below.

\begin{lemma}\label{lemma 2.1}
If $f\in \mathcal{F}_{Morse}^{\delta}$, then $f$ has finite many critical points in $B_{2\delta}$.
\end{lemma}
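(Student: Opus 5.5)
We have to show that $f\in \mathcal{F}_{Morse}^{\delta}$ has only finitely many critical points in $B_{2\delta}$. The plan is to split $B_{2\delta}$ into the inner ball $B_\delta$ and the annulus $A_\delta=B_{2\delta}\backslash B_\delta$. We then use compactness together with the fact that nondegenerate critical points are isolated.

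\textbf{Annulus.} By construction every $f\in\mathcal{F}_{Morse}^{\delta}$ satisfies $f|_{M\backslash B_\delta}=\tilde f|_{M\backslash B_\delta}$. Since $\delta\in\Delta$, the only critical point of $\tilde f$ in $B_{2\delta}$ is $x_0$, which lies in the interior of $B_\delta$. Hence $\nabla f=\nabla\tilde f\neq 0$ on $B_{2\delta}\backslash B_\delta$, so $f$ has no critical points there. One technical point concerns the sphere $\partial B_\delta$. Since $f$ is smooth (the $f_\nu$ are $C^\infty$ approximations), $\nabla f=\nabla\tilde f$ holds on the closure of $M\backslash B_\delta$ by continuity. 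The compact annulus also gives the bound $\|\nabla f|_{A_\delta}\|\ge\epsilon>0$ used in Lemma \ref{lemma 1.2}, which will be reused later.

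\textbf{Inner ball.} Suppose, for contradiction, that there are infinitely many distinct critical points $y_n\in B_\delta$. Since $B_\delta$ is compact (a closed ball in a chart, as in the proof of Lemma \ref{lemma 1.1}), a subsequence converges to some $y\in B_\delta$. By continuity $\nabla f(y)=0$, so $y$ is a critical point. Because $f|_{B_\delta}$ is Morse, $y$ is nondegenerate, i.e.\ the Hessian $\nabla^2 f(y)$ is invertible. In a chart around $y$, the map $x\mapsto\nabla f(x)$ has invertible derivative at $y$, so by the inverse function theorem it is a local diffeomorphism near $y$. Therefore $y$ is the unique zero of $\nabla f$ in some neighbourhood $U$ of $y$. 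This contradicts $y_n\to y$ with $y_n\neq y$ for infinitely many $n$.

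\textbf{Main obstacle.} The only delicate point is the limit critical point possibly lying on $\partial B_\delta$, where the Morse hypothesis on $f|_{B_\delta}$ and the agreement with $\tilde f$ meet. This case is handled by the first step: any critical point on $\partial B_\delta$ would be a critical point of $\tilde f$ other than $x_0$, which is impossible. So the limit $y$ lies in the region where $f$ is Morse, and the argument closes.
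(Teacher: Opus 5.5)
Your proof is correct and follows the same route as the paper. Since $f=\tilde f$ off $B_\delta$, there are no critical points in the annulus, and isolation of nondegenerate critical points together with compactness of $B_\delta$ gives finiteness inside; the paper just cites the Morse lemma for isolation and leaves the compactness argument and the $\partial B_\delta$ case implicit, while you use the inverse function theorem and spell those details out.
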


\begin{proof}
Since $f|_{B_\delta}$ is Morse and only has critical points in $B_\delta$.  By Morse lemma, $f$ has finite many critical points in $B_\delta$, thus $f$ has finite many critical points in $B_{2\delta}$.
\end{proof}

\begin{lemma}\label{lemma 2.2}
If $f\in \mathcal{F}_{Morse}^{\delta}$, then there exist $x^\pm$ such that
$$\lim_{s\rightarrow \pm\infty}x(s)=x^\pm.$$
\end{lemma}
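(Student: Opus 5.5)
Throughout, $x(s)$ denotes a solution of (\ref{equ1}) with $x(s)\in B_{2\delta}$ for all $s\in\mathbb{R}$; this is the situation guaranteed for $f\in\mathcal{F}^\delta_{Morse}$ by Lemma \ref{lemma 1.2}. The plan is a standard energy argument combined with the finiteness of critical points from Lemma \ref{lemma 2.1}.

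\textbf{Energy is finite.} Since $x(s)$ stays in the compact set $B_{2\delta}$, the function $f(x(s))$ is bounded. It is also nonincreasing, because $\frac{d}{ds}f(x(s))=-\|\nabla f(x(s))\|^2$. Hence the limits $\lim_{s\to\pm\infty}f(x(s))$ exist, and
$$\int_{-\infty}^{+\infty}\|\nabla f(x(s))\|^2ds=\lim_{s\to-\infty}f(x(s))-\lim_{s\to+\infty}f(x(s))<\infty.$$

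\textbf{Limit points are critical.} Fix a sequence $s_n\to+\infty$. Consider the shifted curves $(s_n)_*x$ restricted to $[0,1]$, where $(s_n)_*x(s)=x(s+s_n)$. Because $\|\partial_s x\|$ is uniformly bounded on $B_{2\delta}$, Proposition \ref{prop of weak compactness} gives a subsequence converging in $C^\infty_{loc}$ to a flow line $y$. The energy of the shifted curves on $[0,1]$ is a tail of a convergent integral, so it tends to $0$. Therefore $\nabla f(y)\equiv0$ on $[0,1]$. This shows that every point of the $\omega$-limit set $\omega(x)$ is a critical point of $f$ in $B_{2\delta}$. The same argument with $s_n\to-\infty$ shows every point of the $\alpha$-limit set is critical.

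\textbf{Limit set is a single point.} This is the main step. The set $\omega(x)$ is nonempty by compactness of $B_{2\delta}$. It is connected, as the intersection of the nested compact connected sets $\overline{x([T,\infty))}$. By Lemma \ref{lemma 2.1}, $\mathrm{crit}f\cap B_{2\delta}$ is finite, hence discrete. A nonempty connected subset of a finite set is a single point $x^+$. It remains to show that $\omega(x)=\{x^+\}$ forces $x(s)\to x^+$. Suppose not. Then there are $\varepsilon>0$ and $s_n\to\infty$ with $d(x(s_n),x^+)\ge\varepsilon$. By compactness a subsequence of $x(s_n)$ converges to some point of $\omega(x)$ different from $x^+$, which is a contradiction. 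The argument at $-\infty$ is identical and yields $x^-$.

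\textbf{Where care is needed.} The delicate point is that the flow stays in $B_{2\delta}$, so that compactness and finiteness of the critical set both apply. That is exactly what Lemma \ref{lemma 1.2} and Lemma \ref{lemma 2.1} supply. One can avoid the connectedness argument altogether: $f$ is Morse near each of its finitely many critical points, so these points are isolated, and one can then run the standard argument that a trajectory which returns infinitely often to a small ball around $x^+$ while leaving it infinitely often must pay a definite amount of energy on each excursion. This contradicts finite energy, just as in the proof of Lemma \ref{lemma 1.2}.
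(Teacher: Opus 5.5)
Your proof is correct but takes a different route from the paper; the alternative you sketch in your last paragraph is essentially the paper's argument. The paper never considers $\omega(x)$. It picks disjoint neighbourhoods $U_i$ of the finitely many critical points and sets $V_i^\epsilon=\{x\in U_i:\|\nabla f(x)\|_g<\epsilon\}$. It uses finite energy to show the trajectory meets $\bigcup_i V_i^\epsilon$ at arbitrarily late times. It then chooses $\sigma_\epsilon$ so that the energy tail $\int_{\sigma_\epsilon}^\infty\|\partial_s x\|_g^2\,ds$ is at most $\epsilon K_\epsilon/4$, where $K_\epsilon$ is the minimal distance from $\overline{V_i^{\epsilon/2}}$ to the complement of $V_i^\epsilon$. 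Since $\|\partial_s x\|_g\ge\epsilon/2$ on the shell $V_i^\epsilon\setminus V_i^{\epsilon/2}$, any crossing of it costs energy at least $\tfrac{\epsilon}{2}K_\epsilon$. So after $\sigma_\epsilon$ the trajectory is trapped in a single $V_i^\epsilon$, and letting $\epsilon\to 0$ gives the limit. You replace this quantitative trapping estimate by soft topology: finite energy plus weak compactness makes every $\omega$-limit point critical, $\omega(x)$ is compact and connected, and a connected subset of the finite set $\mathrm{crit}f\cap B_{2\delta}$ is a point. Your route is shorter and works verbatim whenever the critical set in the compact region is merely totally disconnected. The paper's route needs neither the weak compactness proposition nor limit-set topology. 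It also gives an explicit time, controlled only by the energy tail, after which the trajectory stays in a prescribed neighbourhood of its limit. One small correction to your setup: take the containment $x(\mathbb{R})\subset B_{2\delta}$ as a standing hypothesis, as the paper does (flow lines are maps into $B_{2\delta}$). It does not follow from Lemma~\ref{lemma 1.2}, which only concerns orbits already known to run from $x^-$ to $x^+$ and uses $E(x)=f(x^-)-f(x^+)$, so it presupposes the endpoints exist.
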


\begin{proof}
By Lemma \ref{lemma 2.1}, the number of critical points of $f$ is finite. i.e. $\#\text{crit} f<+\infty$. Let $\text{crit} f=\{x_1,...,x_N\}, N\in \mathcal {N}$. Choose neighborhoods $U_i$ of $x_i$ such that $U_i\cap U_j=\emptyset$.

Claim 1. There exists $\epsilon_0>0$ such that
$$||\nabla f(x)||_g\geq \epsilon_0,$$ for all $x\in M\backslash \cup_1^N U_i$.

Proof of Claim 1.
Since $B_{2\delta}$ is compact, $B_{2\delta}\backslash \cup_{i=1}^N U_i$ is compact. Assume by contradiction that $\epsilon_0$ does not exist, then for all $n\in\mathbb{N}$, there exist
$$y_n\in \backslash \cup_{i=1}^N U_i,$$
such that
\begin{equation}\label{gradient f}
||\nabla f(y_n)||_g\le \frac{1}{n}.
\end{equation}
$B_{2\delta}\backslash \cup_{i=1}^N U_i$ is compact implies that $y_n$ has converging subsequence $y_{n_j}$ such that
$$\lim_{j\rightarrow n}y_{n_j}=y\in B_{2\delta}\backslash \cup_{i=1}^N U_i.$$
By (\ref{gradient f}), $||\nabla f(y)||=0$, thus $y\in \text{crit} f$, contradicts $y\in B_{2\delta}\backslash U_{i=1}^N U_i$.

Let $0<\epsilon\le \epsilon_0$,
\begin{equation}\label{V_i^epsilon}
V_i^\epsilon:=\{x\in U_i| ||\nabla f(x)||_g<\epsilon\} \subset U_i.
\end{equation} is an open neighborhood of $x_i$.

Claim 2. For all $0< \epsilon \le \epsilon_0$, there exist $s_\nu^\epsilon\rightarrow +\infty$ such that
$$x(s_\nu^\epsilon)\in \cup_{i=1}^N V_i^\epsilon.$$

Proof of Claim 2.
Assume the contradiction, then there exists $\sigma\in\mathbb{R}$ such that $$x(s)\notin \cup_{i=1}^N V_i^\epsilon, \forall s\ge \sigma,$$
then $$||\nabla f(x(s))||_g\ge \epsilon, \forall \epsilon \ge\sigma.$$
Recall the energy
\begin{equation}
\begin{aligned}
E(x)&=\int_{-\infty}^{+\infty}||\partial_s x||_g^2 ds\\&=\int_{-\infty}^{+\infty}(-df(x(s))\partial_s x(s))ds\\&=\lim_{s\rightarrow -\infty} f(x(s))-\lim_{s\rightarrow +\infty} f(x(s))\\
&\le \max f-\min f.
\end{aligned}
\end{equation}
Then
\begin{equation}
\begin{aligned}
\max f-\min f&\ge E(x)\ge \int_{\sigma}^\infty||\partial_s x(s)||^2_g ds\\
&=\int_{\sigma}^\infty||\nabla f(x)||_g^2 ds\\
&\ge\int_{\sigma}^\infty \epsilon^2 ds\\
&=+\infty.
\end{aligned}
\end{equation}
This contradicts the fact that the energy should be finite.

Consider the compact subsets $\overline {V_i^{\frac{\epsilon}{2}}}$ and  $B_{2\delta}\backslash V_i^{\epsilon}$ of $B_{2\delta}$, $\overline {V_i^{\frac{\epsilon}{2}}}\cap M\backslash V_i^{\epsilon}=\emptyset$. Then the distance $d(\overline {V_i^{\frac{\epsilon}{2}}}, M\backslash V_i^{\epsilon})>0$.
Let $$K_\epsilon:=\min_{i=\{1,...,N\}} d(\overline {V_i^{\frac{\epsilon}{2}}}, M\backslash V_i^{\epsilon})>0.$$ Since
$$E=\int_{-\infty}^{\infty}||\partial_s x(s)||^2<+\infty,$$
there exists $\sigma_\epsilon\in\mathbb{R}$, such that
$$\int_{\sigma_\epsilon}^{+\infty}||\partial_s x(s)||_g ds\leq \frac{\epsilon K_\epsilon}{4}.$$

Claim 3. There exists $i\in\{1,...,N\}$ such that
$$x(s)\in V_i^\epsilon, s\ge \sigma_\epsilon.$$

Proof of Claim 3.
By claim 2, there exists $s_0\ge \sigma_\epsilon, i\in\{1,...,N\}$ such that $x(s_0)\in V_i^{\frac{\epsilon}{2}}$. Assume by contradiction, there exists $s_1\ge \sigma_\epsilon$ such that $x(s_1)\notin V_i^\epsilon$. Then there exists $t_0$ and $t_1$ such that $\sigma_\epsilon \le t_0\le t_1$, $x(t)\in V_i^\epsilon\backslash V_i^{\frac{\epsilon}{2}}$ for $t\in [t_0, t_1]$ and $x(t_0)\in\partial V_i^{\epsilon}, x(t_1)\in\partial V_i^{\frac{\epsilon}{2}}$ or $x(t_0)\in\partial V_i^{\frac{\epsilon}{2}}, x(t_1)\in\partial V_i^{\epsilon}$.
When $t\in[t_0, t_1]$, $$||\partial_s x(t)||_g=||\nabla f(x(t))||_g\ge \frac{\epsilon}{2}.$$
\begin{equation}
\begin{aligned}
d(x(t_0), x(t_1))&\le \int_{t_0}^{t_1}||\partial_s x(s)||_g ds\\
&\le \frac{2}{\epsilon}\int_{t_0}^{t_1}||\partial_s x(s)||_g ds\\
&\le\frac{2}{\epsilon}\int_{\sigma_\epsilon}^\infty||\partial_s x(t)||_g^2 ds\\
&\le \frac{2}{\epsilon}\cdot \frac{\epsilon K_\epsilon}{4}\\
&\frac{K_\epsilon}{2}.
\end{aligned}
\end{equation}
But this conflict with the fact that
$$d(x(t_0), x(t_1))\ge K_\epsilon\ge0.$$

Now we can prove this lemma by claim 3 directly.
By (\ref{V_i^epsilon}), for arbitrary small neighborhood of $x_i$, there exists $\epsilon>0$ such that $V_i^\epsilon\subset V$. By claim 3, there exists $i\in \{1,...,N\}$, for any $s\ge \sigma_\epsilon$, $x(s)\in V_i^\epsilon\subset V$. Thus $\lim\limits_{s\rightarrow +\infty }x(s)=x_i$.
Similary $\lim\limits_{s\rightarrow-\infty} x(s)$ exits.

\end{proof}

\begin{proof of theorem 1}
Let $m\in\mathbb{N}$,

$(A_m)$ There exist a subsequence $\nu_j$ and a broken gradient flow line $y_m=\{x^k\}_{1\le k\le l}$ with $l\le m$ such that there exist sequence $\Gamma_j^k\in \mathbb{R}$ for $1\le k\le l$, such that

$(i)$ $(\Gamma_j^k)_* x_{\nu_j}\overset{C_{loc}^\infty}{\longrightarrow} x^k, 1\le k\le l$.

$(ii)$ $\lim\limits_{s\rightarrow -\infty} x^1(s)=x^-$.

$(iii)$ If  $l<m$, $\lim\limits_{s\rightarrow +\infty} x^l(s)=x^+$.

Proof of $(A_1)$. By the proof of Lemma \ref{lemma 2.1}, the critical points of $f$ are isolated. Choose open neighborhood $V$ of $x^-$ such that $\overline V \cap \text{crit} f=\{x^-\}$.
Define the first exit time of $x(s)$ from $V$ as
$$\Gamma^1_\nu:=inf\{s\in\mathbb{R}, x_\nu (s)\notin V\}$$
$\Gamma_\nu^1$ is finite since $\lim\limits_{s\rightarrow-\infty} x_\nu(s)=x^-$ and $\lim\limits_{s\rightarrow+\infty} x_\nu(s)=x^+$.
By Proposition \ref{prop of weak compactness}, there exist a subsequence $\nu_j$ and a gradient flow line $x^1$ such that
$$(\Gamma_{\nu_j}^1)_* x_{\nu_j}\overset{C_{loc}^\infty}{\longrightarrow}x^1.$$
$x^1$ is nonconstant since
$$x^1(0)=\lim_{j\rightarrow+\infty}(\Gamma_{\nu_j}^1)_* x_{\nu_j}=\lim_{j\rightarrow+\infty} x_{\nu_j}(r_{\nu_j}^1)\in\partial V.$$
and
$$\partial V\cap \text{crit} f=\emptyset.$$
Define $y^1=\{x^1\}$, $\Gamma_j^1=\Gamma_{\nu_j}^1$, $(i)$ is proved.
To show $(ii)$, by Lemma \ref{lemma 2.2}, there exists $x\in\text{x}$ such that $\lim\limits_{x\rightarrow-\infty} x^1(s)=x$.
By definition of $\Gamma_\nu^1$, $(\Gamma_{\nu_j}^1)_* x_{\nu_j}(s)\in\overline{V}, s\le 0$, then $\lim\limits_{s\rightarrow -\infty} x^1(s)\in \text{crit} f\cap \overline{V}=\{x^-\}$. $(ii)$ is proved and (iii) is empty.

Now assume $(A_m)$ is true, we want to prove $(A_{m+1})$.
Let $y_m=\{x^k\}_{1\le k\le l}$ be broken flow line constructed in $(A_m)$.

Case $1$. $\lim\limits_{s\rightarrow+\infty}x^l(s)=x^+$.
Put $y_{m+1}=y_m$, then $(A_{m+1})$ is satisfied.

Case $2$. Asumme $\lim\limits_{s\rightarrow+\infty} x^l(s)\neq x^+$, then $l=m$. By Lemma \ref{lemma 2.2}, there exists $(x^m)^+\in\text{crit}f$ such that $$\lim_{s\rightarrow+\infty} x^m(s)=(x^m)^+.$$
Choose open neighborhood $V$ of $(x^m)^+$ such that $$\overline{V}\cap \text{crit} f=\{(x^m)^+\}.$$
By $\lim\limits_{s\rightarrow+\infty} x^m(s)=(x^m)^+$, there exists $s_0\in\mathbb{R}$ such that $$x^m(s)\in V, \forall s\ge s_0$$ and $$(\Gamma_j^m)_* x_{\nu_j}\overset{C_{loc}^\infty}{\longrightarrow}x^m.$$ Then there exists $j_0\in\mathbb{N}$ such that $$(\Gamma_j^m)_* x_{\nu_j}(s_0)\in V, j\ge j_0.$$
Define
\begin{equation}\label{def of Rj}
R_j:=\inf\{r\ge0:(\Gamma_j^m)_*x_{\nu_j}(s_0+r)\notin V\}.\end{equation}
Properties of $R_j$:

$(i)$ Since $\lim\limits_{s\rightarrow+\infty(\Gamma_j^m)_* x_{v_j}(s)}=x^+\notin\overline{V}$, we have $R_j<+\infty, j\ge j_0$.

$(ii)$ Since $x^m(s)\in V$ for $s\ge s_0$ and $(\Gamma_j^m)_* x_{v_j}(s)\overset{C_{loc}^\infty}{\longrightarrow}x^m(s)$, we have $\lim\limits_{j\rightarrow+\infty} R_j=+\infty$.
Define $$\Gamma_j^{m+1}:=\Gamma_j^m+s_0+R_j.$$
By Proposition \ref{prop of weak compactness}, we have
$$(\Gamma_j^{m+1})_* x_{v_j}\overset{C_{loc}^\infty}{\longrightarrow}x^{m+1}.$$

Claim $1$. $x^{m+1}$ is nonconstant.

Proof of Claim $1$.

$(\Gamma_j^{m+1})_* x_\nu(0)\in \partial V$ implies that $x^{m+1}(0)\in \partial V$. By $\partial V\cap \text{crit} f=\emptyset$, we get $x^{m+1}$ is nonconstant.

Claim $2$. $\lim\limits_{s\rightarrow+\infty}x^{m+1}(s)=(x^m)^+$.

Proof of Claim $2$.

By Lemma \ref{lemma 2.2}, there exists $(x^{m+1})^-\in \text{crit} f$ such that $\lim\limits_{s\rightarrow-\infty}x^{m+1}(s)=(x^{m+1})^-$. Now we show that $(x^{m+1})^-=(x^m)^+$ by contradiction. Assume $(x^{m+1})^-\neq (x^m)^+$. Since $\overline{V}\cap \text{crit} f=\{(x^m)^+\}$, we can choose an open neighorhood $V'$ of $(x^{m+1})^-$ such that
\begin{equation}\label{a}
\overline{V}\cap \overline{V'}=\emptyset.
\end{equation}
$\lim\limits_{s\rightarrow-\infty} x^{m+1}(s)=(x^{m+1})^-$ implies that there exists $s_1<0$ such that $$x^{m+1}(s)\in V', \forall s\le s_1.$$
$(\Gamma_j^{m+1})_* x_{v_j}\overset{C_{loc}^\infty}{\longrightarrow}x^{m+1}$ implies that there exists $j_0'$ such that $\forall j\ge j_0'$,
\begin{equation}\label{b}
(\Gamma_j^{m+1})_*x_{v_j}(s_1)\in V'.
\end{equation}
Definition (\ref{def of Rj}) of $R_j$ implies that
\begin{equation}\label{c}
(\Gamma_j^m)_* x_{v_j}(s)\in V, s\in(s_0, s_0+R_j).
\end{equation}
By $$(\Gamma_j^m)_* x_\nu(s_0+R_j)=x_v(\Gamma_j^m+s_0+R_j)=(\Gamma_j^{m+1})_* x_\nu(0),$$ we have
\begin{equation}
\begin{aligned}
(\Gamma_j^m)_* x_{v_j}(s)&=x_{v_j}(s)\\
&=x_{v_j}(s+\Gamma_j^m)\\
&=x_{v_j}(s+\Gamma_j^{m+1}-s_0-R_j)\\
&=(\Gamma_j^{m+1})_* x_{v_j}(s-s_0-R_j).
\end{aligned}
\end{equation}
Denoting $\theta=s-s_0-R_j$, by (\ref{c}), we have
\begin{equation}\label{d}
(\Gamma_j^{m+1})_*x_{v_j}(\theta)\in V, \theta\in (-R_j, 0).
\end{equation}
Since $\lim_{j\rightarrow +\infty} R_j=+\infty$, there exists $j$ such that $-R_j\le s_1$.
By (\ref{b}), (\ref{d}) and then (\ref{a}), we have
$$(\Gamma_j^{m+1})_* x_{v_j}(s_1)\in V'\cap V=\emptyset.$$

Define $y_{m+1}=\{x^k\}_{1\le k\le m+1}$, then $y_{m+1}$ is a broken gradient flow line and satisfies $(A_{m+1})$.

Since $(x^m)^+\neq(x^{m'})^-$ for $m\neq m'$ and $\#\text{crit}f <+\infty$, the induction stabilizes after at most $\#\text{crit} f$ many steps. i.e. There exists $m\in\mathbb{N}$ such that $$(x^m)^+=x^+.$$
\end{proof of theorem 1}

\subsection{Transversality and the moduli space of the gradient flow lines as a manifold}


We define the Hilbert manifold $\mathcal{H}=\mathcal{H}_{x_-, x_+}$ of $W_{1,2}$-trajectories from $x^-$ to $x^+$. We define charts below.

Let $x\in C^\infty (\mathbb{R}, B_{2\delta})$ such that $\exists T>0$,
$$x(s)=x^-, s\ge T,$$
$$x(s)=x^+, s\le T.$$
$x^* TB_{2\delta}$ is the vector bundle over $\mathbb{R}$.
Choose an open neighborhood $V_x$ of zero section such that $\forall r\in \mathbb{R}$, the restriction of the exponential map
$$exp: T_{x(r)} B_{2\delta}\cap V_x\rightarrow B_{2\delta}$$
is injective.

Choose a trivialization $\Phi$ of the vector bundle $x^* TB_{2\delta}$ over $\mathbb{R}$, $$\Phi: x^*TB_{2\delta}\rightarrow \mathbb{R}\times\mathbb{R}^n$$
Denoting $$U_x=\{\xi\in W^{1,2}(\mathbb{R}, \mathbb{R}^n)| (r, \xi(r)\in\Phi(V_x), \forall r\in \mathbb{R})\}\in W^{1,2}(\mathbb{R}, \mathbb{R}^n).$$
We then have charts
$$\phi_x: U_x\rightarrow \mathcal{H}, \xi\mapsto exp(\Phi^{-1}(\xi)).$$


Define a $L^2$-bundle over $\mathcal{H}$ $$s: \mathcal{H}\rightarrow \mathcal{E}$$
The fiber on $x\in\mathcal{H}$ is $$\mathcal{E}_x:=L^2(x^*TM).$$
$s_x=\pi^{-1}(x), \pi\circ s=id_\mathcal{H}$. We identify $\mathcal{H}$ with the zero section of $\mathcal{E}$.
Define a section that depend on the Riemannian metric $g$
$$s_g:\mathcal{H}\rightarrow \mathcal{E}, s_g(x)=\partial_s x+\nabla_g f(x).$$
Then the zero set of the section $s_g$ is the moduli space of parametrized gradient flow lines from $x_-$ to $x^+$. i.e.
$$s_g^{-1}(0)=\tilde{M}(f, g; x^-, x^+)$$

Now we define the vertical differential of section $s$. If $x\in\mathcal{H}$, then the differential of the section is a linear map
$$ds(x): T_x\mathcal{H}\rightarrow T_{s(x)}\mathcal{E}.$$
Note that if $x\in\mathcal{H}$ we have a canonical splitting of the tangent space $T_x\mathcal{E}$ into horizontal and vertical subspaces
$$T_x \mathcal{E}=T_x\mathcal{H}\oplus\mathcal{E}_x.$$
Let $\pi:T_x\mathcal{E}\rightarrow \mathcal{E}_x$ denote the projection along $T_x\mathcal{H}$ and define the vertical differential
$$Ds(x):=\pi\circ ds(s): T_x\mathcal{H}\rightarrow \mathcal{E}_x.$$

\begin{definition}
A section $s:\mathcal{H}\rightarrow\mathcal{E}$ is called transverse to the zero section, denoted by $s\pitchfork 0$, if $D_s(x)$ is surjective $\forall x\in s^{-1}(0)$.
\end{definition}

\begin{definition}
 A section $s:\mathcal{H}\rightarrow\mathcal{E}$ is called a Fredholm section if $$Ds(x):T_x\mathcal{H}\rightarrow\mathcal{E}_x$$ is a Fredholm operator $\forall x\in s^{-1}(0)$.
\end{definition}

The implicit function theorem tells us the following theorem, refer to Appendix A in \cite{McDuff}.
\begin{theorem}
$s:\mathcal{H}\rightarrow\mathcal{E}$ is a transverse Fredholm section, then $s^{-1}(0)$ is a manifold, $x\in s^{-1}(0)$.
$$\dim_x s^{-1}(0)=\dim(\ker D_s(x))=\text{ind} D_s(x).$$
\end{theorem}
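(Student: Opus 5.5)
The statement is the finite-dimensional reduction underlying the implicit function theorem for Banach bundles. I will argue locally near a fixed zero $x\in s^{-1}(0)$ and then patch, using that being a manifold is a local property.

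\textbf{Step 1: local trivialization.} Fix $x\in s^{-1}(0)$. Use the chart $\phi_x:U_x\rightarrow\mathcal{H}$ constructed above, together with a trivialization of $\mathcal{E}$ over $\phi_x(U_x)$; for instance, parallel transport along the geodesics $r\mapsto \exp(\Phi^{-1}(r\xi))$ identifies $\mathcal{E}_{\phi_x(\xi)}$ with $\mathcal{E}_x$. In these coordinates $s$ becomes a $C^1$ map
$$F:U_x\subset W^{1,2}(\mathbb{R},\mathbb{R}^n)\rightarrow L^2(\mathbb{R},\mathbb{R}^n),\qquad F(0)=0.$$
Because $s(x)=0$, the differential $dF(0)$ agrees with the vertical differential $Ds(x)$. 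This holds at zeros only, which is exactly why the theorem requires only surjectivity on $s^{-1}(0)$.

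\textbf{Step 2: splitting.} Write $D=dF(0)$. Since $D$ is Fredholm, $K=\ker D$ is finite dimensional, so it has a closed complement $X$ with $W^{1,2}=K\oplus X$. Since $D$ is surjective, $D|_X:X\rightarrow L^2$ is a bijective bounded operator, and by the open mapping theorem it is an isomorphism. Note also that $\text{ind}\,D=\dim\ker D-\dim\text{coker}\,D=\dim K$, because the cokernel vanishes. This gives the equality $\dim\ker Ds(x)=\text{ind}\,Ds(x)$ in the statement.

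\textbf{Step 3: implicit function theorem.} Define $G:K\times X\rightarrow L^2$ by $G(k,\xi)=F(k+\xi)$. Then $\partial_\xi G(0,0)=D|_X$ is invertible, so the Banach-space implicit function theorem applies. It gives neighborhoods $O\subset K$ and $W\subset X$ of $0$ and a $C^1$ map $h:O\rightarrow W$ with
$$F^{-1}(0)\cap(O\times W)=\{k+h(k):k\in O\}.$$
Hence near $x$ the zero set $s^{-1}(0)$ is the graph of a $C^1$ map over an open subset of $K\cong\mathbb{R}^{\dim K}$. This provides a chart of dimension $\dim\ker Ds(x)$.

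\textbf{Step 4: compatibility.} Transition maps between two such graph charts are compositions of the chart maps of $\mathcal{H}$ with projections and graph maps, so they are $C^1$; smoothness follows if $s$ is smooth. In a neighborhood of $x$ the index is locally constant, and surjectivity is an open condition, so the local dimension is well defined.

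\textbf{Main obstacle.} The delicate step is Step 1: showing that the section is at least $C^1$ in the $W^{1,2}$ charts. This requires that the composition $\xi\mapsto\nabla_g f(\exp(\Phi^{-1}\xi))$ be differentiable as a map $W^{1,2}\rightarrow L^2$, and that the trivialization of $\mathcal{E}$ be smooth. I would obtain this from the embedding $W^{1,2}(\mathbb{R})\hookrightarrow C^0$ together with the fact that $f$ is smooth with bounded derivatives on the compact set $B_{2\delta}$; the argument follows the standard Nemytskii-operator estimates, as in Appendix A of McDuff--Salamon.
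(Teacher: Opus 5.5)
Your proposal is correct and is essentially the argument the paper relies on. The paper gives no proof of its own; it simply invokes the implicit function theorem from Appendix A of McDuff--Salamon, which is the local-trivialization, splitting and graph argument you spell out.

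One small adjustment: the paper's charts $\phi_x$ are centered at curves that are constant near $\pm\infty$. A nonconstant flow line is never constant near $\pm\infty$ (by ODE uniqueness), so you should center the chart at a nearby such curve $x'$ and run Steps 1--3 at $\xi_0=\phi_{x'}^{-1}(x)$ rather than at $0$. Nothing else in your argument changes.
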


\begin{corollary}\label{s^-1 manifold}
If $s\pitchfork 0$, then $s^{-1}(0)=\tilde M(f, g; x^-, x^+)$ is a manifold and
$$dim \tilde{M}(f, g; x^-, x^+)=\mu(x^-)-\mu(x^+).$$
\end{corollary}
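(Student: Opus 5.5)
Since $s\pitchfork 0$ holds by assumption, the preceding theorem (the implicit function theorem for transverse Fredholm sections, Appendix A of \cite{McDuff}) already shows that $s^{-1}(0)=\tilde{\mathcal M}(f,g;x^-,x^+)$ is a manifold with local dimension $\operatorname{ind} Ds(x)$ at each $x\in s^{-1}(0)$. What remains is twofold: show that $s_g$ is a Fredholm section, and show that its index equals $\mu(x^-)-\mu(x^+)$. The index must be independent of $x$; in particular all components then have the same dimension.

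First I will compute the vertical differential at a zero $x$. I use the Levi-Civita connection, or equivalently the trivialization $\Phi$ chosen along $x$. In this frame, $Ds(x)\xi=\partial_s\xi+A(s)\xi$ for $\xi\in W^{1,2}(\mathbb R,\mathbb R^n)$, where $A(s)$ is the matrix of $\nabla^2 f(x(s))$, suitably corrected by the connection terms. Because $x(s)\to x^\pm$ as $s\to\pm\infty$ (Lemma \ref{lemma 2.2}), $A(s)$ converges to $A^\pm$, the Hessians of $f$ at $x^\pm$. These matrices are symmetric. They are nondegenerate because $f\in\mathcal F^\delta_{Morse}$ and, as the proof of Lemma \ref{lemma 1.2} and Lemma \ref{lemma 2.1} show, all critical points lie in $B_\delta$, where $f$ is Morse. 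Moreover, the whole analysis takes place inside $B_{2\delta}$ by Lemma \ref{lemma 1.2}, so the standard global Morse-theoretic argument applies without change.

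Next, the Fredholm property. I follow the standard argument in \cite{Audin}. When $A$ is constant, symmetric and nondegenerate, $\partial_s+A$ is an isomorphism $W^{1,2}\to L^2$; this follows from the Fourier transform, since $|i\sigma+A|\ge c>0$. Using this, I prove a semi-Fredholm estimate
\[
\|\xi\|_{W^{1,2}}\le C\bigl(\|D\xi\|_{L^2}+\|\xi\|_{L^2([-T,T])}\bigr).
\]
The estimate comes from cutting off near $\pm\infty$, where $A(s)$ is close to $A^\pm$, and using elliptic (here ODE) regularity on the middle piece. Rellich compactness on $[-T,T]$ then gives a finite-dimensional kernel and closed range. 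The same estimate for the formal adjoint $-\partial_s+A^T$ gives a finite-dimensional cokernel.

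Finally, the index. The index is locally constant under continuous deformations of $A(\cdot)$ that keep the limits $A^\pm$ fixed and nondegenerate. I therefore first homotope $A(s)$ to a diagonal path $\operatorname{diag}(a_1(s),\dots,a_n(s))$ with the same endpoints; the eigenvalue signatures at the ends are determined by $\mu(x^\pm)$. This reduces the problem to scalar operators $\partial_s+a(s)$, whose kernel and cokernel I compute explicitly as $e^{-\int a}$ and $e^{\int a}$. The contribution is $+1$ exactly when $a$ goes from negative to positive, $-1$ when it goes from positive to negative, and $0$ otherwise. Summing gives $\mu(x^-)-\mu(x^+)$, the count of negative eigenvalues. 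Since transversality makes $Ds(x)$ surjective, $\dim\ker Ds(x)=\operatorname{ind}$. I expect the main obstacle to be the homotopy/spectral-flow step: the homotopy must pass through operators with nondegenerate ends, which requires connecting $A^-$ to a diagonal matrix within the nondegenerate symmetric matrices of the same signature, and the index must be shown to be invariant along the way by continuity of the Fredholm index.
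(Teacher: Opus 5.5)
Your proposal is correct and follows the same route as the paper. The paper gives no separate proof: it deduces the corollary directly from the preceding implicit function theorem for transverse Fredholm sections, tacitly assuming that $Ds(x)$ is Fredholm with $\text{ind}\, Ds(x)=\mu(x^-)-\mu(x^+)$; your Fredholm estimate and your homotopy (through operators with nondegenerate asymptotic limits) to diagonal scalar operators $\partial_s+a_i(s)$ supply exactly that standard index argument.
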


Let $\mathcal{M}$ be the space of all Riemannian metrics on $B_{2\delta}$ endowed with $C^\infty$-topology.
$\mathcal{U}\subset \mathcal{M}$ is called second category if there exist open dense subsets  $\mathcal{M}_i\subset \mathcal{M}$ for $1\le i\le +\infty$ such that $\mathcal{U}=\cap_{i=1}^{+\infty} \mathcal{M}_i$.
By Baire theorem, a set of second category is dense.
\begin{theorem}(Smooth Transversality)\label{thm of smooth transversality}
There exists $\mathcal{U}\subset \mathcal{M}$ of second category such that $$s_g\pitchfork 0, \forall g\in \mathcal{U}.$$
i.e. $\mathcal{U}$ is countable intersection of open and close subsets.
\end{theorem}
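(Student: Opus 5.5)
The plan is the standard Sard--Smale argument over the universal moduli space, as in Audin--Damian, restricted to metrics on $B_{2\delta}$. Since $f\in\mathcal{F}_{Morse}^{\delta}$ has only finitely many critical points by Lemma \ref{lemma 2.1}, it suffices to handle a single pair $(x^-,x^+)$ and intersect the resulting residual sets over the finitely many pairs. To use Banach-space tools, I would first replace $\mathcal{M}$ by a Banach manifold of metrics. One choice is the space $\mathcal{M}^k$ of $C^k$ metrics. Another is Floer's $C^\varepsilon$-space of perturbations $g_h$, where $h$ is a symmetric endomorphism field with $\sum_k \varepsilon_k\|h\|_{C^k}<\infty$, supported away from the critical points. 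At the end I would pass from $C^k$ to $C^\infty$ by Taubes' trick: intersect over countable exhaustions by sets of flow lines with bounded $\sup_s|\partial_s x|$ and fixed neighbourhoods. These pieces are open by the compactness of Theorem \ref{thm of Floer-Gromov compactness} and dense, which gives the countable intersection $\mathcal{U}=\cap_i\mathcal{M}_i$.

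Next I would show that each $D s_g(x)$ is Fredholm, with index $\mu(x^-)-\mu(x^+)$. In the trivialisation $\Phi$ it has the form $\xi\mapsto\partial_s\xi+A(s)\xi$, where $A(s)\to \mathrm{Hess}f(x^\pm)$ as $s\to\pm\infty$. These limits are nondegenerate because $f$ is Morse on $B_\delta$ and every critical point lies in $B_\delta$. The Fredholm property and the index formula are the standard spectral-flow result, which I take as already used in Corollary \ref{s^-1 manifold}. I then introduce the universal section
$$\mathcal{S}:\mathcal{H}\times\mathcal{M}^k\to\mathcal{E},\qquad \mathcal{S}(x,g)=\partial_s x+\nabla_g f(x),$$
and show it is transverse to the zero section. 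The implicit function theorem then makes the universal moduli space $\mathcal{S}^{-1}(0)$ a Banach manifold. The projection $\pi:\mathcal{S}^{-1}(0)\to\mathcal{M}^k$ is Fredholm with the same index as $Ds_g(x)$, and its cokernel is identified with that of $Ds_g(x)$. Sard--Smale (with $k$ large compared to the index) then shows that the regular values of $\pi$ are residual, and $g$ regular for $\pi$ means exactly $s_g\pitchfork0$.

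The main obstacle is surjectivity of $D\mathcal{S}(x,g)=Ds_g(x)\oplus(h\mapsto \nabla_{g+h}f-\nabla_g f)$. Since the range is closed, I would argue by contradiction. Suppose $\eta\in L^2(x^*TM)$ annihilates the range. Then $\eta$ solves the adjoint equation $-\partial_s\eta+A^T\eta=0$, so it is smooth and, by uniqueness of ODE solutions, either identically zero or nowhere zero. Varying the metric gives $\frac{d}{dt}\nabla_{g+th}f=-h\nabla_g f$ (after identifying $h$ with a $g$-symmetric endomorphism). Choose $s_0$ with $\partial_s x(s_0)\neq0$ and $\eta(s_0)\neq0$. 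There is a symmetric endomorphism $h_0$ at $x(s_0)$ with $\langle h_0\nabla f,\eta\rangle>0$; the case $\eta\parallel\nabla f$ is handled by $h_0=\mathrm{id}$.

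The delicate point is cutting $h_0$ off to a bump near $x(s_0)$ so that $\int\langle h\nabla f(x(s)),\eta(s)\rangle\,ds>0$. This requires the flow line $x$ to pass through $x(s_0)$ only once. That holds because $f\circ x$ is strictly decreasing along a nonconstant flow line, so $x$ is injective. It also uses that $\nabla f\neq0$ away from the critical points. This forces $\eta=0$, which proves surjectivity. Note also that the perturbation lives in $B_{2\delta}$, where flow lines stay by Lemma \ref{lemma 1.2}; if locality must persist for the new metric, I would restrict to $g$ in a small $C^0$-neighbourhood of the original metric so that the energy argument of Lemma \ref{lemma 1.2} still applies.
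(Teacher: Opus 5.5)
Your proposal follows the paper's route. It uses the universal section over $C^k$ metrics, proves surjectivity of $DS(g,x)$ via an annihilator $\eta$ solving the adjoint equation and a symmetric form $h_0$ with $h_0(\nabla_g f,\eta)>0$, applies Sard--Smale to the projection $\Pi$, and finishes with Taubes' argument. Your cutoff via strict monotonicity of $f\circ x$ fills in a step the paper only asserts, although the constant case $x^-=x^+$ still needs the paper's separate remark that $\partial_s+\mathrm{Hess}f(x^-)$ is invertible.

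One correction in the Taubes step: here $\sup_s|\partial_s x|=\sup|\nabla_g f|$ is automatically bounded, so the only loss of compactness is breaking, which Theorem~\ref{thm of Floer-Gromov compactness} does not exclude. The exhausting pieces must therefore rule it out, for example by fixing the level $f(x(0))$ and requiring $x(s)$ to stay in small fixed balls around $x^{\mp}$ for $\mp s\ge i$; only then are the sets $\mathcal{M}_i$ open.
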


Let $\mathcal{M}^k$ be the space of Riemannian metrics on $B_{2\delta}$ of regularity $C^k$. Note that $\mathcal{M}$ is not a Banach manifold but just a Fr\'echet manifold. $\mathcal{M}^k$ is a Banach maninfold and we can apply Sard's theorem directly. We can obtain the Smooth Transversality Theorem from the $C^k$ Transversality Theorem below for all large enough integers $k$ due to an argument of Taubes, refer to section 7.7 in \cite{McDuff}.

\begin{theorem}( $C^k$ Transversality)\label{thm of C^k transversality}
There exist $\mathcal{U}\subset \mathcal{M}^k$ of second category such that
$$s_g\pitchfork 0,\forall g\in\mathcal{U}.$$
\end{theorem}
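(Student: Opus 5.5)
This is the standard Sard–Smale argument for Morse homology, adapted to the ball $B_{2\delta}$, with metrics of class $C^k$ as parameters. First, I will build the universal moduli space. Fix critical points $x^\pm$ of $f\in\mathcal{F}_{Morse}^{\delta}$ and define the universal section
$$S:\mathcal{H}\times\mathcal{M}^k\rightarrow\mathcal{E},\qquad S(x,g)=\partial_s x+\nabla_g f(x),$$
where $\mathcal{E}$ is pulled back to $\mathcal{H}\times\mathcal{M}^k$. Its zero set is the universal moduli space
$$\tilde{\mathcal{M}}^{univ}=\{(x,g): x\in\tilde{\mathcal{M}}(f,g;x^-,x^+)\}.$$
For fixed $g$, the vertical differential $D_{s_g}(x)\xi=\nabla_s\xi+\nabla_\xi\nabla_g f(x)$ is Fredholm of index $\mu(x^-)-\mu(x^+)$, as in Corollary \ref{s^-1 manifold}. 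The reason is that the asymptotic operators are the Hessians at the nondegenerate points $x^\pm$, and $f$ is Morse on $B_\delta$ with all critical points in $B_\delta$. The full differential is
$$DS(x,g)(\xi,h)=D_{s_g}(x)\xi+h\text{-variation of }\nabla_g f,$$
and that variation equals $-g^{-1}h\,g^{-1}df(x)$.

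The key step, and the main obstacle, is to show that $DS(x,g)$ is onto at every zero. Since $D_{s_g}(x)$ is Fredholm, the range of $DS(x,g)$ is closed, so it suffices to show the annihilator is trivial. Suppose $\eta\in L^2(x^*TM)$ is orthogonal to the range. Orthogonality to $D_{s_g}(x)\xi$ for all $\xi$ gives $D_{s_g}(x)^*\eta=0$, so $\eta$ is smooth and satisfies a linear ODE; hence $\eta$ either vanishes identically or vanishes nowhere. Orthogonality to the metric variations gives
$$\int\langle h_{x(s)}(\nabla f,\cdot),\eta\rangle\,ds=0$$
for all $C^k$ symmetric $h$. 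Since $x^-\neq x^+$, the flow line is nonconstant, injective and embedded, and $\nabla f(x(s))\neq 0$ everywhere along it. So at some $s_0$ we can choose a symmetric bilinear form $h$ at $x(s_0)$ with $h(\nabla f,\cdot)=g(\eta(s_0),\cdot)$ and cut it off to a small neighborhood of $x(s_0)$, which the curve meets only near $s_0$. The integral is then positive, a contradiction, so $\eta=0$. The care needed here is locality: $h$ must be supported in $B_{2\delta}$, and the curve must not return to the support. Injectivity follows from strict monotonicity of $f$ along the flow line, and Lemma \ref{lemma 1.2} keeps everything inside $B_{2\delta}$.

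Then the implicit function theorem makes $\tilde{\mathcal{M}}^{univ}$ a $C^{k-1}$ Banach manifold, and the projection $\pi:\tilde{\mathcal{M}}^{univ}\to\mathcal{M}^k$ is Fredholm with index $\mu(x^-)-\mu(x^+)$. By Sard–Smale, for $k$ large its regular values form a set of second category. A metric $g$ is a regular value of $\pi$ exactly when $D_{s_g}(x)$ is onto for all $x\in s_g^{-1}(0)$; this is the standard linear-algebra lemma, since the kernel and cokernel of $d\pi$ are identified with those of $D_{s_g}$.

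Finally, there are only finitely many pairs $(x^-,x^+)$ by Lemma \ref{lemma 2.1}. Intersecting the corresponding residual sets gives the second-category set $\mathcal{U}$ on which $s_g\pitchfork 0$ for all pairs.
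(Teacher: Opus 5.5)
Your proposal is correct and follows essentially the same route as the paper: surjectivity of the universal linearization $DS$ via the annihilator argument with metric variations (Proposition \ref{prop of DSg} with Lemma \ref{Auxiliary lemma}), then the implicit function theorem, Sard--Smale for the projection $\Pi$, and the fact that regular values of $\Pi$ give surjective $Ds_g$ (Lemma \ref{Dsg is surjective}); your cutoff-plus-injectivity argument makes explicit a localization step that the paper leaves implicit. The one thing to add is the trivial constant case $x^-=x^+$ (the paper's Case 2), where your ``since $x^-\neq x^+$'' step does not apply, but the only solution is constant and $Ds_g(x)=\partial_s+A$, with $A$ the nondegenerate Hessian, is already surjective.
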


Consider the section
$$S: \mathcal{M}^k\times \mathcal{H}\rightarrow\mathcal{E}.$$
If $(g, x)\in S^{-1}(0)$, then $x$ is a gradient flow line from $x^-$ to $x^+$ with respect to $\nabla_g f$.

\begin{proposition}\label{prop of DSg}
Assume $g\in\mathcal{M}^k, x\in s^{-1}_g(0)$, then
$$DS (g, x): T_g\mathcal{M}\times T_x\mathcal{H}\rightarrow \mathcal{E}_x$$ is surjective.
\end{proposition}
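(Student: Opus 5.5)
We follow the standard argument for Morse--Smale transversality (as in Audin--Damian and Schwarz). First we compute the linearization. In a chart, varying the metric in direction $h\in T_g\mathcal{M}^k$, a symmetric $(0,2)$-tensor field of class $C^k$, gives
$$DS(g,x)(h,\xi)=D_x\xi+\frac{d}{dt}\Big|_{t=0}\nabla_{g+th}f(x)=D_x\xi - A_h(x)\nabla_g f(x),$$
where $D_x\xi=\nabla_s\xi+\nabla_\xi\nabla_g f$ is the vertical differential of $s_g$. Here $A_h$ is the endomorphism defined by $g(A_h v,w)=h(v,w)$; this identity follows from differentiating $g_t(\nabla_{g_t}f,\cdot)=df$ in $t$.

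Since $x^-\neq x^+$ are nondegenerate critical points of the Morse function $f$, the operator $D_x$ is Fredholm from $W^{1,2}$ to $L^2$: its asymptotic operators are the Hessians at $x^\pm$, which are hyperbolic. Consequently the range of $DS(g,x)$ is closed. It contains $\mathrm{Im}\,D_x$, which is closed and of finite codimension, so the sum with any further subspace is closed. To prove surjectivity it therefore suffices to show that the $L^2$-orthogonal complement of the range is zero.

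Let $\eta\in L^2(x^*TM)$ annihilate the range. Annihilating $\mathrm{Im}\,D_x$ means $D_x^*\eta=-\nabla_s\eta+(\nabla\nabla f)\eta=0$ weakly. Elliptic regularity for this ODE gives that $\eta$ is smooth, and uniqueness of ODE solutions gives that $\eta$ is either identically zero or nowhere zero. Assume $\eta\not\equiv 0$. Annihilating the metric variations means
$$\int_{\mathbb{R}} h\big(\nabla_g f(x(s)),\eta(s)\big)\,ds=0$$
for all admissible $h$.

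The key step, and the main obstacle, is choosing $h$ to contradict this. We pick $s_0$ where $x$ is not at a critical point. Since $x$ is nonconstant, $\partial_s x(s_0)=-\nabla f(x(s_0))\neq0$, so $v:=\nabla f(x(s_0))\neq 0$ and $w:=\eta(s_0)\neq0$. A nonconstant gradient line is injective in $s$, because $f$ is strictly decreasing along it. Hence a small neighbourhood $W$ of $x(s_0)$ meets $x(\mathbb{R})$ only along $x((s_0-\epsilon,s_0+\epsilon))$.

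Next we choose a symmetric bilinear form $h_0$ at $x(s_0)$ with $h_0(v,w)>0$. For instance, take $h_0=v^\flat\odot w^\flat$, for which $h_0(v,w)=\tfrac12(|v|^2\langle w,w\rangle+\langle v,w\rangle^2)>0$. We extend $h_0$ by parallel transport and multiply by a bump function supported in $W$. By continuity the integrand is positive near $s_0$ and vanishes elsewhere, so the integral is positive, a contradiction. Therefore $\eta\equiv0$ and $DS(g,x)$ is surjective.

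One point needs care: $W$ must lie inside $B_{2\delta}$, where the metrics are varied. This holds because $x(s)\in B_{2\delta}$ by Lemma \ref{lemma 1.2}.
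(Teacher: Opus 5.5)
Your argument for $x^-\neq x^+$ is the paper's Case 1. An $L^2$-annihilator $\eta$ of the range solves the adjoint ODE, metric variations force $\int h(\nabla_g f(x),\eta)\,ds=0$, and a symmetric form with $h_0(\nabla_g f(x(s_0)),\eta(s_0))>0$ gives a contradiction. You supply steps the paper leaves implicit: closed range via the Fredholm property, the explicit choice $h_0=v^\flat\odot w^\flat$, and the cutoff in a neighbourhood $W$ that the trajectory meets only near $s_0$. For that last step, injectivity of $x$ alone does not produce such a $W$, but strict monotonicity of $f$ does. Choose $c>0$ with $|f(x(s))-f(x(s_0))|\ge c$ for $|s-s_0|\ge\epsilon$, and take $W\subset\{|f-f(x(s_0))|<c\}$.

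The real omission is the constant case. The statement covers every $x\in s_g^{-1}(0)$, and $\mathcal{H}_{x^-,x^+}$ is also defined when $x^-=x^+$, in which case $x\equiv x^-$. You assume $x^-\neq x^+$, and your key step, picking $s_0$ with $\nabla_g f(x(s_0))\neq 0$, breaks when $x$ is constant. There $\nabla_g f(x(s))\equiv 0$, so no such $s_0$ exists. Moreover, since $\nabla_{g+rh}f(x^-)=0$ for all $r$, we get $L_{x,g}h=0$ for every $h$, so metric variations add nothing to the range.

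Surjectivity must therefore come from $D_x$ alone, which is what the paper's Case 2 checks. Here $D_x=\partial_s+A$ with $A=\mathrm{Hess}\,f(x^-)$ constant, symmetric and nondegenerate. An annihilator then satisfies $\partial_s\eta=A\eta$, so $\eta(s)=e^{sA}\eta(0)$, which lies in $L^2$ only if $\eta(0)=0$. Adding this case completes your proof.
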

\begin{proof}
Case 1. $x^-\neq x^+$.

$(g, x)\in S^{-1}(0)$. i.e. $\partial_s x+\nabla_g f(x)=0$.
For Riemannian metric $g_1$, $g_2$ and $\lambda_1, \lambda_2\in \mathbb{R}^*$, $\lambda_1g_1+\lambda_2g_2$ is again a Riemannian metric.
Define the linear map for $y\in B_{2\delta}$
$$L_{y,g}: Sym(T_y B_{2\delta})\rightarrow T_y B_{2\delta}, h\mapsto \frac{d}{dr}\bigg|_{r=0}\nabla_{g+rh}f(y)$$

The vertical differential of $S$ at $(g, x)$ is
\begin{equation}\label{DS}
DS(g, x): T_g\mathcal{M}^k\oplus T_x\mathcal{H}\rightarrow \mathcal{E}_x, (h,\xi)\mapsto Ds_g(x)\xi+L_{x,g}h.
\end{equation}

On $\mathcal{E}_x$ we define the $L^2$ inner product, for $\eta_1, \eta_2$
$$\langle\eta_1,\eta_2\rangle_g=\int_{-\infty}^{+\infty}g_{x(s)}(\eta_1(s),\eta_2(s))ds, \eta_1,\eta_2\in\mathcal{E}_x.$$
Let
\begin{equation}\label{eta}
\eta\in \text{im} DS(g, x)^\perp=\text{coker}(DS(g, x)),
\end{equation}
we want to show that $\eta$ vanishes. (\ref{eta}) is equivalent to
$$\langle DS(g, x)(h,\xi), \eta\rangle_g=0, \forall (h,\xi)\in T_g\mathcal{M}^k\oplus T_x\mathcal{H}.$$
By (\ref{DS}), this implies
\begin{equation}\label{Ds,L1}
\langle Ds_g(x)\xi, \eta \rangle_g, \forall \xi\in T_x\mathcal{H}
\end{equation}
and
\begin{equation}\label{Ds,L2}
\langle L_{x,g}h,\eta\rangle_g, \forall h\in T_g\mathcal{M}^k.
\end{equation}

(\ref{Ds,L1}) implies that $\eta$ lies in the cokernel of $Ds_g(x)$, $
\eta\in Ds_g^\perp$, i.e. $$\partial_s\xi+A(s)\xi=0.$$ Then we have $$\partial_s\eta-A^T(s)\eta=0,$$i.e. $\eta$ lies in the kernel of the adjoint of $Ds_g(x)$. As a result, $\eta$ is continuous.

Note that \begin{equation}\label{g,df}
g(\nabla_gf(x),\eta)=df(x)\eta.
\end{equation}
Differentiating with respect to the metric $g$ we get
$$\frac{d}{dr}\bigg|_{r=0}(g+rh)(\nabla_{g+rh}f(x),\eta)=h(\nabla_gf(x),\eta)+g(L_{x,g}h,\eta).$$
Since the right handside of $(\ref{g,df})$ is independent of $g$, we get
$$h(\nabla_gf(x),\eta)+g(L_{x,g}h,\eta)=0.$$
Together with (\ref{Ds,L2}), we get

\begin{equation}\label{integral of h}
0=\langle L_{x,g}h,\eta\rangle_g=\int_{-\infty}^{+\infty}g(L_{x,g}h,\eta)ds=-\int_{-\infty}^{+\infty}h(\nabla_gf(x),\eta)ds.
\end{equation}

Assume by contradiction $\exists s_0 \in\mathbb{R}$ such that $\eta(s_0)\neq 0$.
Since $x^-\neq x^+$, $x$ is a nonconstant gradient flow line flow downhill, we have $\nabla_g f(x(s))\neq 0, \forall s\in \mathcal{R}$. By Lemma \ref{Auxiliary lemma} below, there exists symmetric bilinear form $ h_0$ on $T_{x(s_0)} M$ such that $$h_0(\nabla_g f(x(s_0), \eta{s_0}))>0.$$
This contradict with (\ref{integral of h}), as a result $\eta\equiv 0$.

Case 2. $x^-\neq x^+$.
$\forall g\in \mathcal{M}$, $s^{-1}_g(0)=x(s)=x^-=x^+$, where $x(s)$ is constant.

Claim. $Ds_g(x)$ is surjective for all $g\in \mathcal{M}^k$.
The linearisation of the gradient flow equation of (\ref{equ1}) is given by
\begin{equation}\label{DA}
D_A\xi=\partial_s\xi+A\xi,
\end{equation}
where $A$ is the Hessian of $f$  and $x^-=x^+$. Since $f$ is Morse after changing coordinates we can assume that
$$A=\left(\begin{matrix} a_1 & \cdot\cdot\cdot &0\\ & \cdot\cdot\cdot &  \\0 & \cdot\cdot\cdot  &a_n
\end{matrix}\right), a_i\neq 0, 1\le i\le n.$$
Assume $\xi\in \text{ker}D_A$, $\xi=(\xi_1,...,\xi_n)$, then (\ref{DA}) implies that
$$\xi_i(s)=\xi_i(0)e^{-a_is}, 1\le i\le n.$$
Since $\xi\in W^{1,2}$, we have $\xi\equiv 0$ and ker$D_A=\{0\}$.
Therefore, $$\text{im}D_A^\perp=\text{ker}D_{-A}=\{0\}$$.

\end{proof}

\begin{lemma}\label{Auxiliary lemma}
Assume $v, w\in \mathbb{R}\backslash\{0\}$, then there exists symmetric bilinear form $h$ such that $h(v, w)>0$.
\end{lemma}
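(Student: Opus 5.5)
The statement has a typo: it should say $v,w\in\mathbb{R}^n\setminus\{0\}$, or more intrinsically $v,w\in T_yB_{2\delta}\setminus\{0\}$. This is how it is used in the proof of Proposition \ref{prop of DSg}, with $v=\nabla_g f(x(s_0))$ and $w=\eta(s_0)$. I will prove that version. The plan is to write down an explicit symmetric bilinear form, splitting into two cases according to the sign of the Euclidean (or $g$-) inner product $\langle v,w\rangle$.

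\textbf{Case $\langle v,w\rangle\neq 0$.} Take
$$h(a,b)=\langle v,w\rangle\,\langle a,b\rangle .$$
This form is symmetric and bilinear, and $h(v,w)=\langle v,w\rangle^2>0$.

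\textbf{Case $\langle v,w\rangle=0$.} Take the symmetrized outer product
$$h(a,b)=\langle a,w\rangle\langle b,v\rangle+\langle a,v\rangle\langle b,w\rangle .$$
It is symmetric by construction. Here $h(v,w)=\langle v,w\rangle^2+|v|^2|w|^2=|v|^2|w|^2>0$, because both vectors are nonzero.

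In fact the second formula alone works in every case. For arbitrary nonzero $v,w$ it gives $h(v,w)=\langle v,w\rangle^2+|v|^2|w|^2>0$, so no case distinction is needed. I would present just this single formula.

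There is no real obstacle in the lemma itself. The only point needing care is how it is applied in Proposition \ref{prop of DSg}: one needs a metric perturbation $h\in T_g\mathcal{M}^k$ whose integral in (\ref{integral of h}) is nonzero. The steps are as follows.
\begin{enumerate}
\item Extend $h_0$ to a smooth symmetric $2$-tensor field on $B_{2\delta}$.
\item Multiply it by a bump function supported near $x(s_0)$.
\item Use continuity of $\eta$ to get $h(\nabla_g f(x(s)),\eta(s))>0$ for $s$ near $s_0$.
\item Use injectivity of the nonconstant flow line $x$ (it is strictly decreasing in $f$) to ensure the support is met only for $s$ near $s_0$.
\end{enumerate}
That localization argument belongs to the proposition, though, not to the lemma.
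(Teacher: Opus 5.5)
Your proof is correct. It is also more than the paper offers: the paper gives no argument of its own and simply refers to Lemma 7.8 of the cited reference.

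Your reading of the hypothesis as $v,w\in\mathbb{R}^n\setminus\{0\}$ (a tangent space) is the one the application needs, with $v=\nabla_g f(x(s_0))$ and $w=\eta(s_0)$. The literal statement is just the case $n=1$.

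Your single formula $h(a,b)=\langle a,w\rangle\langle b,v\rangle+\langle a,v\rangle\langle b,w\rangle$ is symmetric and gives $h(v,w)=\langle v,w\rangle^2+|v|^2|w|^2>0$. So the case distinction you begin with can indeed be dropped. A common alternative route is to build a symmetric $A$ with $Aw=v$, which forces a split according to whether $\langle v,w\rangle=0$; your construction is a shorter uniform version of that.

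Your closing remark is worth keeping, because the paper skips this step. It passes directly from the pointwise inequality $h_0(\nabla_g f(x(s_0)),\eta(s_0))>0$ to a contradiction with the vanishing of $\int h(\nabla_g f(x),\eta)\,ds$. That passage silently requires three things:
\begin{enumerate}
\item extending $h_0$ to a tensor field cut off near $x(s_0)$;
\item continuity of $\eta$;
\item the fact that the flow line meets the support only for $s$ near $s_0$, which follows from strict monotonicity of $f\circ x$.
\end{enumerate}
Your four localization steps supply exactly this.
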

\begin{proof}
Refer to Lemma 7.8 in \cite{McDuff}.
\end{proof}

By Proposition \ref{prop of DSg} and the implicit function theorem we have that $S^{-1}(0)$ is a Banach manifold. Consider the map
\begin{equation}\label{Pi}
\Pi: S^{-1}(0)\rightarrow \mathcal{M}^k, (g, x)\rightarrow g.
\end{equation}
If the differential
\begin{equation}\label{dPi}
d\Pi (y):T_yS^{-1}(0)\rightarrow T_g\mathcal{M}^k
\end{equation}
is surjective, then $g$ is called a regular value of $\Phi$ for all $y\in\Phi^{-1}(g)$. Since $S^{-1}(0)$ is a Banach manifold, by Sard's theorem
$$\mathcal{M}^k_{reg}=\{g\in \mathcal{M}^k: g \text{ is a regular value of } \Pi \}.$$
is of second category in $\mathcal{M}$.

\begin{lemma}\label{Dsg is surjective}
If $g\in \mathcal{M}_{reg}^k$, then $Ds_g$ is surjective for every $(g, x)\in\Pi^{-1}(g)$.
\end{lemma}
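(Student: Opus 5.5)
The plan is the standard linear-algebra argument from McDuff--Salamon (Lemma A.3.6): surjectivity of $d\Pi$ at a point of the universal moduli space implies surjectivity of the vertical differential of the restricted section. Fix $g\in\mathcal{M}^k_{reg}$ and $x$ with $(g,x)\in\Pi^{-1}(g)$, i.e.\ $s_g(x)=0$. By Proposition \ref{prop of DSg}, $S^{-1}(0)$ is a Banach manifold near $(g,x)$. Its tangent space is
$$T_{(g,x)}S^{-1}(0)=\ker DS(g,x)=\{(h,\xi): Ds_g(x)\xi+L_{x,g}h=0\},$$
and $d\Pi(g,x)(h,\xi)=h$.

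Let $\eta\in\mathcal{E}_x$ be arbitrary; we must find $\xi\in T_x\mathcal{H}$ with $Ds_g(x)\xi=\eta$. The argument has three steps.

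\emph{Step 1.} Since $DS(g,x)$ is surjective by Proposition \ref{prop of DSg}, there is $(h_1,\xi_1)\in T_g\mathcal{M}^k\oplus T_x\mathcal{H}$ with
$$Ds_g(x)\xi_1+L_{x,g}h_1=\eta.$$

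\emph{Step 2.} Since $g$ is a regular value of $\Pi$, $d\Pi(g,x)$ is surjective. Hence there is $\xi_2$ with $(h_1,\xi_2)\in\ker DS(g,x)$, that is,
$$Ds_g(x)\xi_2+L_{x,g}h_1=0.$$

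\emph{Step 3.} Subtracting the two equations gives $Ds_g(x)(\xi_1-\xi_2)=\eta$. So $Ds_g(x)$ is surjective, and since $x\in s_g^{-1}(0)$ was arbitrary, $s_g\pitchfork 0$.

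The main point needing care is the justification that $\ker DS(g,x)$ really is the tangent space of $S^{-1}(0)$ and that $\Pi$ is a $C^\ell$ Fredholm map, so that Sard--Smale applies and the notion of regular value is meaningful. For the first, I would note that $DS(g,x)$ is surjective and that its restriction to $T_x\mathcal{H}$, namely $Ds_g(x)$, is Fredholm (the linearized operator $\partial_s+A(s)$ with hyperbolic limits at $x^\pm$). Consequently $DS(g,x)$ has a right inverse, and the implicit function theorem identifies the tangent space with the kernel. For the Fredholm property of $\Pi$, I would use that $\ker d\Pi\cong\ker Ds_g(x)$ and that the cokernel of $d\Pi$ is isomorphic to the cokernel of $Ds_g(x)$. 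This is again a short linear-algebra check, in which Step 2 runs in reverse. The regularity $k$ is taken large enough for Sard--Smale, as in the statement of Theorem \ref{thm of C^k transversality}.
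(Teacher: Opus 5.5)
Your proof is correct and follows the paper's argument: lift $\eta$ through the surjective $DS(g,x)$ from Proposition \ref{prop of DSg}, use surjectivity of $d\Pi(g,x)$ to find an element of $\ker DS(g,x)$ with the same metric component, and subtract to get $Ds_g(x)(\xi_1-\xi_2)=\eta$. Your signs are right where the paper's displayed chain slips (its intermediate line $Ds_g(x)\xi_0+Ds_g(x)\xi_1$ should read $Ds_g(x)\xi_0-Ds_g(x)\xi_1$). Your extra remarks, identifying $T_{(g,x)}S^{-1}(0)$ with $\ker DS(g,x)$ and noting that $\Pi$ is Fredholm, supply justification the paper leaves implicit.
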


\begin{proof}
$y=(g,x)\in S^{-1}(0)$,
$$T_{(g,x)}S^{-1}(0)=\{(h,\xi)\in T_g\mathcal{M}\times T_x{\mathcal{H}}\big| DS(g, x)(h, \xi)=0\}.$$
By (\ref{DS}),

\begin{equation}\label{TS^-1(0)}
T_{(g,x)}S^{-1}(0)=\{(h,\xi)\in T_g\mathcal{M}\times T_x{\mathcal{H}}\big| Ds_g(x)\xi=-L_{x,g}h\}.
\end{equation}

Choose $\eta\in\xi_x$.
Since $DS(g, x):T_g{\mathcal{M}}\times T_x\mathcal{H}\rightarrow\mathcal{E}_x$ is surjective, $\exists (h_0,\xi_0)\in T_g\mathcal{M}\times T_x\mathcal{H}$ such that
\begin{equation}\label{1}
DS(g, x)(h_0,\xi_0)=\eta.
\end{equation}
For $g\in \mathcal{M}_{reg}^k$, $$d\Pi(g,x): T_{g,x}S^{-1}(0)\rightarrow T_g\mathcal{M}^k$$
is surjective, then there exists $(h_1,\xi_1)\in T_g\mathcal{M}\times T_x\mathcal{H}$ such that $$d\Pi(g,x)(h_1,\xi_1)=h_0.$$ In fact, $h_1$ is just $h_0$ by definition (\ref{Pi}) of $\Pi$. i.e.
\begin{equation}\label{4}
h_1=h_0
\end{equation}
Since $(h_1,\xi_1)\in T_g\mathcal{M}\times T_x\mathcal{H}$, by (\ref{TS^-1(0)})
\begin{equation}\label{3}
D_{s_g}(x)\xi_1=-L_{x,g}h_1.
\end{equation}
By (\ref{1}), (\ref{DS})(\ref{4}) and (\ref{3}),
\begin{equation}
\begin{aligned}
\eta&=DS(g,x)(h_0, \xi_0)\\
&=D_{s_g}(x)\xi_0+L_{x,g}h_0\\
&=D_{s_g}(x)\xi_0+L_{x, g} h_1\\
&=D_{s_g}(x)\xi_0+D_{s_g}(x)\xi_1\\
&=D_{s_g}(x)(\xi_0-\xi_1)
\end{aligned}
\end{equation}
Setting $\xi=\xi_0-\xi_1\in T_x\mathcal{H}$, this becomes
$$Ds_g(x)\xi=\eta,$$
i.e. $D_{s_g}$ is surjective.

\end{proof}

\begin{proof of theorem 4}
By Lemma \ref{Dsg is surjective} and Sard's theorem we get this theorem directly.
\end{proof of theorem 4}

\begin{proof of theorem 3}
By Theorem \ref{thm of C^k transversality} and argumemt of Taubes, we get this theorem.
\end{proof of theorem 3}

\subsection{Definition of local Morse homology}

Define the local Morse complex as the vector space
$$CM^{loc}_k(f)=\{\sum_{c\in \text{Crit}_k(f)}a_c c|a_c\in \mathbb{Z}/2\},$$
where $\text{Crit}_k(f)$ denotes the set of critical points of Morse index of $k$.

\begin{definition}
A tuple $(f,g)$ consisting of $f\in\mathcal{F}_{Morse}^{\delta}$ and $g\in \mathcal{M}_{reg}(f)$ is called a local Morse-Smale pair.
\end{definition}
If $(f,g)$ is a local Morse-Smale pair, then by Corollary \ref{s^-1 manifold} and Theorem \ref{thm of smooth transversality} we know that $\tilde{\mathcal{M}}(f, g; x^-, x^+)$ is a mainfold and $$\text{dim}\tilde{M}(f, g; x^-, x^+)=\mu(x^-)-\mu(x^+).$$
Then $$\text{dim}\mathcal{M}(f, g; x^-, x^+)=\text{dim}\tilde{\mathcal{M}}(f, g; x^-, x^+)/\mathbb{R}=\mu(x^-)-\mu(x^+)-1.$$

If $\mu(x^-)=\mu(x^+)+1$ for $x^-, x^+\in \text{crit} f$, together with Theorem \ref{thm of Floer-Gromov compactness},
$\mathcal{M}(f, g; x^-, x^+)$ is a compact manifold of dimension $0$, then $\mathcal{M}(f, g; x^-, x^+)$ consist of finite points.

Define boundary operator
$$\partial: CM^{loc}_k(x_0,\delta, f, g)\rightarrow CM^{loc}_{k-1}(x_0, \delta, f, g)$$
$$\partial x^-=\sum_{\mu(x^-)=\mu(x^+)+1} \#_2\mathcal{M}(f, g; x^-, x^+) x^+.$$

If $\mu(x^-)=\mu(x^+)+2$ for $x^-, x^+\in \text{crit}f$, then $\mathcal{M}(f, g; x^-, x^+)$ is a compact manifold with dimension 1. Note that the only examples of compact $1$-dim manifold are finite disjoint unions of closed interval and circles, therefore if $X$ is a $1$-dim compact manifold, then $\# \partial_2 X$ is even.
\begin{equation}
\begin{aligned}
&\partial^2 x^-\\=&\partial \big(\sum_{c\in \text{crit}f,  \mu(x^-)=\mu(c)+1}\#_2\mathcal{M}(f, g; x^-, c)c \big)\\
=&\sum_{c\in \text{crit}f,  \mu(x^-)=\mu(c)+1}\#_2\mathcal{M}(f, g; x^-, c)\partial c\\
=&\sum_{c\in \text{crit}f,  \mu(x^-)=\mu(c)+1}\#_2\mathcal{M}(f, g; x^-, c) \sum_{\mu(x^-)=\mu(x^+)+2}\#_2\mathcal{M}(f, g; c, x^+)x^+\\
=&\sum_{\mu(x^-)=\mu(x^+)+2} \big(\sum_{c\in\text{crit}f, \mu(x^-)=\mu(c)+1} \#_2\mathcal{M}(f, g; x^-, c) \#_2\mathcal{M}(f, g; c, x^+) \big) x^+\\
=&\sum_{\mu(x^-)=\mu(x^+)+2} \bigg(\#_2\big(\bigcup_{c\in\text{crit}f, \mu(x^-)=\mu(c)+1}\mathcal{M}(f, g; x^-,c)\times\mathcal{M}(f, g; c, x^+) \big) \bigg).
\end{aligned}
\end{equation}
Where $\bigcup_{c\in\text{crit}f, \mu(x^-)=\mu(c)+1}\mathcal{M}(f, g; x^-,c)\times\mathcal{M}(f, g; c, x^+) $ is the boundary of the manifold $\mathcal{M}(f, g; x^-, x^+)$,by the disussion above, its moduli $2$ sum vanishes, i.e.
$$\#_2\big( \bigcup_{c\in\text{crit}f, \mu(x^-)=\mu(c)+1}\mathcal{M}(f, g; x^-,c)\times\mathcal{M}(f, g; c, x^+) \big)=0.$$
As a result, $$\partial^2=0.$$

Therefore, if $(f, g)$ is a local Morse smale pair, we can define local Morse homology in $B_{2\delta}$. $$HM^{loc}_k(\tilde{f}, x_0, \delta, f, g)=\ker \partial_k(\tilde{f}, x_0, \delta, f,g)/\text{im }\partial_{k+1}(\tilde{f}, x_0, \delta, f, g).$$


\subsection{Invariance of local Morse homology }\label{section 2.5}

We aim to prove that $HM^{loc}_*$ does not depend on the choice of the Morse function $f$, the Riemannian metric $g$ and the neighborhood $B_{2\delta}$.

The invariance with respect to $\delta$ is obvious. In fact, assume $f_\alpha\in\mathcal{F}_{Morse}^{\delta_1}, f_\beta\in\mathcal{F}_{Morse}^{\delta_2}$ and let $\delta=\max\{\delta_1,\delta_2\}$, then we have $f_\alpha, f_\beta \in \mathcal{F}_{Morse}^{\delta}$. Moreover, the local Morse homology even do not depend on the shape of the neighborhood of the critical point.

\begin{theorem}\label{thm of homology iso}
If $(f_\alpha, g_\alpha)$ and $(f_\beta, g_\beta)$ are two Morse pairs, then there exists isomorphism
$$\Phi_{\beta\alpha}: HM_*^{loc}(f_\alpha, g_\alpha)\rightarrow HM_*^{loc}(f_\beta, g_\beta).$$
\end{theorem}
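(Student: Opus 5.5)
The plan is the standard continuation argument (as in Audin--Damian and Floer), with one extra ingredient: we must check that the time-dependent flow lines stay inside $B_{2\delta}$. First we reduce to a common neighborhood. By the remark above we may assume $f_\alpha, f_\beta\in\mathcal{F}^{\delta}_{Morse}$ for the same $\delta$, both agreeing with $\tilde f$ outside $B_\delta$. Following \cite{Frauenfelder}, we choose a smooth homotopy $(f_s,g_s)_{s\in\mathbb{R}}$ with $(f_s,g_s)=(f_\alpha,g_\alpha)$ for $s\le -R$ and $(f_s,g_s)=(f_\beta,g_\beta)$ for $s\ge R$. For concreteness take
\[ f_s=\tilde f+\beta(s)(f_\beta-\tilde f)+(1-\beta(s))(f_\alpha-\tilde f), \]
where $\beta$ is a monotone cutoff. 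Then every $f_s$ coincides with $\tilde f$ on the annulus $A_\delta=B_{2\delta}\setminus B_\delta$ and is $C^\infty$-close to $\tilde f$ on $B_\delta$. We then study solutions of
\[ \partial_s x(s)=-\nabla_{g_s} f_s(x(s)), \qquad x(s)\to x^\pm, \]
with $x^-\in\mathrm{crit} f_\alpha$ and $x^+\in\mathrm{crit} f_\beta$. We define
\[ \Phi_{\beta\alpha}x^-=\sum_{\mu(x^+)=\mu(x^-)}\#_2\mathcal{M}(x^-,x^+)\,x^+. \]

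The key step, and the main obstacle, is the locality estimate, i.e.\ the time-dependent analogue of Lemma \ref{lemma 1.2}. The energy identity now reads
\[ E(x)=f_\alpha(x^-)-f_\beta(x^+)+\int_{\mathbb{R}}(\partial_s f_s)(x(s))\,ds. \]
The last term is bounded by $\int|\beta'|\cdot\|f_\beta-f_\alpha\|_{C^0(B_\delta)}=\|f_\beta-f_\alpha\|_{C^0(B_\delta)}$, and this bound does not depend on $R$. By Lemma \ref{lemma 1.1}, after choosing $f_\alpha,f_\beta$ sufficiently $C^0$-close to $\tilde f$, we can make the whole right-hand side smaller than $\epsilon\delta$. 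Here $\epsilon$ is a lower bound for $\|\nabla_{g_s}\tilde f\|$ on $A_\delta$, taken uniformly in $s$; this uniformity is why the $g_s$ should be restricted to a compact family of metrics. A flow line that leaves $B_{2\delta}$ must cross $A_\delta$ twice, so on $A_\delta$ we have $\|\partial_s x\|\ge\epsilon$. This gives $E\ge2\epsilon\delta$, a contradiction, as in (\ref{E1}).

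A subtlety is that Lemma \ref{lemma 1.1} only controls critical values. So I would instead bound $f_\alpha(x^-)-f_\beta(x^+)$ by $|\tilde f(x^-)-\tilde f(x^+)|+2\|f-\tilde f\|_{C^0}$. Since both points lie near the unique critical point $x_0$, this requires $\delta$ small, or equivalently the choice $\delta\in\Delta$ with perturbations supported near $x_0$. I expect this step to need the most care.

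Given locality, the remaining steps copy the earlier sections. First, weak compactness and Floer--Gromov compactness hold as in Proposition \ref{prop of weak compactness} and Theorem \ref{thm of Floer-Gromov compactness}; the only change is that the broken trajectories now consist of $f_\alpha$-flow lines, one continuation line, and $f_\beta$-flow lines. Second, we need transversality for a generic homotopy of metrics $g_s$, proved as in Proposition \ref{prop of DSg}. Here $\mathbb{R}$ no longer acts, so the moduli spaces have dimension $\mu(x^-)-\mu(x^+)$. Third, we obtain the chain map identity $\partial_\beta\Phi_{\beta\alpha}=\Phi_{\beta\alpha}\partial_\alpha$ by counting the ends of the one-dimensional moduli spaces. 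Finally, we show that $\Phi_{\beta\alpha}$ is an isomorphism on homology. We prove $\Phi_{\gamma\beta}\Phi_{\beta\alpha}\simeq\Phi_{\gamma\alpha}$ by gluing and a homotopy-of-homotopies argument, and $\Phi_{\alpha\alpha}=\mathrm{id}$ using the constant homotopy, whose only index-$0$ solutions are the constant ones. Together these give $\Phi_{\alpha\beta}\Phi_{\beta\alpha}=\mathrm{id}$ on homology. Each of these auxiliary moduli spaces needs the same energy bound, and all the homotopies are again convex combinations of functions $C^0$-close to $\tilde f$, so the locality argument above applies to them as well.
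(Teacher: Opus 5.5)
Your proposal follows essentially the same route as the paper: continuation maps for an interpolating homotopy $f_s$, and locality of continuation trajectories from the energy identity with the extra term $\int(\partial_s f_s)(x(s))\,ds$, where your bound $\int|\beta'|\,ds=1$ plays the role of Lemma~\ref{lemma 2.5.1} and your cross-term estimate the role of Lemma~\ref{lemma 2.5.2}; then time-dependent Floer--Gromov compactness, generic transversality, the chain-map identity from the ends of one-dimensional moduli spaces, homotopy of homotopies, gluing for $\Phi^{\gamma\beta}\circ\Phi^{\beta\alpha}$, and the constant homotopy for $\Phi^{\alpha\alpha}=\mathrm{id}$. One correction in the cross-term step: it is not closed by shrinking $\delta$, since the threshold $\epsilon\delta$ shrinks too and the oscillation of $\tilde f$ on $B_\delta$ need not fall below it; instead keep $\delta$ fixed and take $f_\alpha,f_\beta$ $C^1$-close to $\tilde f$, so that, as in the proof of Lemma~\ref{lemma 1.1}, their critical points converge to $x_0$ and $|\tilde f(x^-)-\tilde f(x^+)|\to 0$, which is exactly what Lemma~\ref{lemma 2.5.2} encodes.
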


\begin{corollary}
Betti numbers only depend on the manifold $M$ and the critical point $x_0$ and not Morse-Smale pair, i.e.$$\text{dim }HM_k^{loc}(f_\alpha, g_\alpha)=\text{dim }HM_k^{loc}(f_\beta, g_\beta).$$
\end{corollary}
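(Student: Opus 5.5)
The statement to prove is the Corollary after Theorem \ref{thm of homology iso}: the dimensions $\text{dim }HM_k^{loc}(f,g)$ do not depend on the local Morse--Smale pair. I would obtain it directly from Theorem \ref{thm of homology iso}.

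Given two local Morse--Smale pairs $(f_\alpha,g_\alpha)$ and $(f_\beta,g_\beta)$, first put them in a common neighborhood. If $f_\alpha\in\mathcal{F}_{Morse}^{\delta_1}$ and $f_\beta\in\mathcal{F}_{Morse}^{\delta_2}$, take $\delta=\max\{\delta_1,\delta_2\}$ as in the remark preceding the theorem, so that both pairs live in $\mathcal{F}^{\delta}_{Morse}$ on $B_{2\delta}$. Theorem \ref{thm of homology iso} then gives an isomorphism $\Phi_{\beta\alpha}\colon HM_*^{loc}(f_\alpha,g_\alpha)\to HM_*^{loc}(f_\beta,g_\beta)$.

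The only point needing care is that $\Phi_{\beta\alpha}$ respects the grading. I would use that it is induced by a continuation chain map $CM^{loc}_k(f_\alpha)\to CM^{loc}_k(f_\beta)$, which counts index-zero time-dependent flow lines between critical points of equal Morse index. This chain map therefore preserves degree, and so $\Phi_{\beta\alpha}$ restricts to isomorphisms $HM_k^{loc}(f_\alpha,g_\alpha)\cong HM_k^{loc}(f_\beta,g_\beta)$ for each $k$.

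Both sides are $\mathbb{Z}_2$-vector spaces, and they are finite dimensional. Indeed, $CM_k^{loc}$ is spanned by finitely many critical points by Lemma \ref{lemma 2.1}, and homology is a subquotient of the chain group. Isomorphic finite-dimensional vector spaces have equal dimension, so $\text{dim }HM_k^{loc}(f_\alpha,g_\alpha)=\text{dim }HM_k^{loc}(f_\beta,g_\beta)$.

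Finally, the pairs are arbitrary perturbations of $\tilde f$ near $x_0$. The Betti numbers therefore depend only on $(M,\tilde f,x_0)$, and the main obstacle, namely grading preservation, is handled by the degree-zero continuation map.
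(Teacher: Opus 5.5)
Your proposal is correct and follows the paper's approach: the paper gives no separate proof and treats this corollary as an immediate consequence of Theorem \ref{thm of homology iso}. Your remark that the continuation map $\phi^{\beta\alpha}$ counts only flow lines with $\mu(c_1)=\mu(c_2)$, and therefore gives degree-wise isomorphisms, makes explicit a grading detail the paper leaves implicit; your finite-dimensionality step is harmless but not needed, since isomorphic vector spaces always have equal dimension.
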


Now we consider gradient flow lines for time dependent functions and metrics. Inspired by \cite{Frauenfelder}, we construct the time dependent function as following.
\begin{equation}\label{fs}
f_s(x(s))=\left\{
\begin{aligned}
&f_\alpha(x(s)), s\le -T, T>0,\\
&f_\beta(x(s)), s\ge T, \\
&\gamma(s) f_\alpha(x(s))+(1-\gamma(s))f_\beta(x(s)), -T<s<T.
\end{aligned}
\right.
\end{equation}
where $\delta\in \mathbb{R}^+$ and
$$\gamma(s)=\left\{
\begin{aligned}
&\frac{1}{2}e^\frac{1}{\delta}e^{-\frac{T}{(t+T)\delta}}, -T<s\le 0\\
&1-\frac{1}{2}e^\frac{1}{\delta}e^{-\frac{T}{(-t+T)\delta}}, 0<s< T.
\end{aligned}
\right.$$
Computing directly, we know that $\gamma'(s)$ exits for all $t\in [-T, T]$ and $\partial_s f_s(x(s))$ exists for all $t\in\mathbb{R}$.
In order to prove the local behavior of the time dependent gradient flow lines, we first estimate $\gamma'(s)$.
\begin{lemma}\label{lemma 2.5.1}
There exists $\delta>0$, such that
$$\gamma'(s)\le\frac{1}{T}.$$
\end{lemma}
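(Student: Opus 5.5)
The plan is to compute $\gamma'(s)$ explicitly on each of the two pieces, reduce the bound to a one-variable maximization, and then choose $\delta$; I expect $\delta=\tfrac12$ to work. Throughout I read the variable $t$ in the definition of $\gamma$ as $s$.

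\emph{Left piece.} For $-T<s\le 0$ write $\gamma(s)=\tfrac12 e^{1/\delta}\exp\bigl(-\tfrac{T}{(s+T)\delta}\bigr)$. Differentiating the exponent gives
\begin{equation*}
\gamma'(s)=\frac{T}{(s+T)^2\delta}\,\gamma(s)=\frac{e^{1/\delta}}{2T\delta}\,v^2e^{-v/\delta},\qquad v:=\frac{T}{s+T}.
\end{equation*}
As $s$ ranges over $(-T,0]$, the variable $v$ ranges over $[1,\infty)$. This also shows $\gamma'\ge 0$, so only the upper bound matters.

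\emph{Right piece.} For $0<s<T$ the same computation with $s$ replaced by $-s$ gives
\begin{equation*}
\gamma'(s)=\frac{e^{1/\delta}}{2T\delta}\,v^2e^{-v/\delta},\qquad v=\frac{T}{T-s}\in(1,\infty).
\end{equation*}
So both pieces reduce to bounding the same function.

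\emph{One-variable maximization.} Set $h(v)=v^2e^{-v/\delta}$ on $[1,\infty)$. Then $h'(v)=v\,e^{-v/\delta}(2-v/\delta)$. If $2\delta\le 1$, then $h$ is decreasing on $[1,\infty)$, so $h(v)\le h(1)=e^{-1/\delta}$. In that case
\begin{equation*}
\gamma'(s)\le \frac{e^{1/\delta}}{2T\delta}\,e^{-1/\delta}=\frac{1}{2T\delta}.
\end{equation*}
To get $\gamma'\le 1/T$ we need $\delta\ge\tfrac12$, so the natural choice is $\delta=\tfrac12$. This is exactly the boundary case $2\delta=1$, where $h$ is still nonincreasing on $[1,\infty)$. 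It gives $\gamma'(s)\le 1/T$ on both pieces, with equality only at $s=0$.

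\emph{Remaining checks and main obstacle.} I would verify that the two one-sided derivatives at $s=0$ agree: both equal $\tfrac{1}{2T\delta}$, which is $1/T$ when $\delta=\tfrac12$. I would also check that $\gamma'(s)\to0$ as $s\to\pm T$, so that $\gamma'$ exists on all of $[-T,T]$, as the paper asserts. The only real obstacle is making sure the choice of $\delta$ is compatible with the maximization. For $\delta>\tfrac12$ the interior maximum at $v=2\delta$ gives the bound $\tfrac{2\delta}{T}e^{1/\delta-2}$, which exceeds $1/T$. So the choice $\delta=\tfrac12$ is essentially forced, and it is the one I would commit to.
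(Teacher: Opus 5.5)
Your argument is correct and follows the paper's route: compute $\gamma'$ explicitly, note that for $\delta\le\frac12$ its maximum over $s$ is $\frac{1}{2\delta T}$, attained at $s=0$, and take $\delta=\frac12$. Your substitution $v=T/(s+T)$ (resp.\ $v=T/(T-s)$) makes the maximization cleaner, and also covers the right-hand piece, which the paper leaves implicit. One correction to your closing remark, which the proof does not rely on: $\frac{2\delta}{T}e^{1/\delta-2}$ is decreasing on $(\frac12,1)$ and equals $\frac{2}{eT}<\frac1T$ at $\delta=1$, so every $\delta\in[\frac12,1]$ (and somewhat beyond) also works and $\delta=\frac12$ is not forced; the paper's own treatment of the regime $\delta\ge\frac12$ misstates this too.
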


\begin{proof}
$$\gamma'(s)=\frac{1}{2}e^{\frac{1}{\delta}}\frac{T}{(t+T)^2\delta}e^{-\frac{T}{(t+T)\delta}}.$$
$$\gamma''(s)=\frac{1}{2}e^{\frac{1}{\delta}\frac{T}{(t+T)^4\delta^2}}e^{\frac{T}{(t+T)\delta}}(T-2(t+T)\delta).$$

If $\delta\le \frac{1}{2}$,
$\gamma'(s)\ge 0.$
$\gamma'(s)$ gets the maximum value $h(\delta)=\frac{1}{2\delta T}$ at $t=0$.
and $h(\delta)$ gets the minimum value $\frac{1}{T}$ at $\delta=\frac{1}{2}$.

If $\delta\ge\frac{1}{2}$,
$\gamma'(s)$ gets the maximum value $h(\delta)=\frac{2\delta}{T} e^{\frac{1}{\delta}-2}$ at $t+T=\frac{T}{2\delta}$  ,
Since $$h'(\delta)=4e^{\frac{1}{\delta}-2}\frac{\delta-1}{\delta}<0,$$
$h(\delta)$ gets the minimum value $\frac{1}{T}$ at $\delta=\frac{1}{2}$.

Therefore, let $\delta=\frac{1}{2}$, we get $\gamma'(s)\le \frac{1}{T}$.
\end{proof}

Let
\begin{equation}\label{gs}
g_s(x(s))=\left\{
\begin{aligned}
&g_\alpha(x(s)), s\le -T, \\
&g_\beta(x(s)), s\ge T, T>0.
\end{aligned}
\right.
\end{equation}
be a smooth family of Riemannian metrics.

The gradient flow line $x(s): \mathbb{R}\rightarrow M$ is a solution of the time dependent O.D.E.
\begin{equation}\label{ODE(s)}
\partial_s x(s)+\nabla_{g_s}f_s\big(x(s)\big)=0, s\in\mathbb{R}.
\end{equation}

From the definition (\ref{fs}), (\ref{gs}) and (\ref{ODE(s)}), we can find that when $s\le -T$, $x(s)$ is the solution of
\begin{equation}\label{ODE1}
\partial_s x(s)+\nabla_{g_\alpha}f_\alpha=0, s\le -T
\end{equation}
 when $s\ge T$, $x(s)$ is the solution of
 \begin{equation}\label{ODE2}
 \partial_s x(s)+\nabla_{g_\beta}f_\beta=0, s\ge T.
 \end{equation}

 \begin{lemma}
There exists $\delta_0>0$, such that if $x(s)$ is a solution of (\ref{ODE(s)}), (\ref{ODE1}), or (\ref{ODE2}), then $x(s)\in B_{2\delta_0}$.
\end{lemma}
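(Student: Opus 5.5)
We mimic the energy argument of Lemma \ref{lemma 1.2}, now for the time-dependent equation (\ref{ODE(s)}). The flow lines in question run from a critical point $x^-$ of $f_\alpha$ to a critical point $x^+$ of $f_\beta$, and all these critical points lie in $B_\delta$. Outside $B_\delta$ both $f_\alpha$ and $f_\beta$ agree with $\tilde f$, so $f_s=\tilde f$ there for every $s$. Since $\delta\in\Delta$, there is $\epsilon>0$ with $\|\nabla\tilde f\|\ge\epsilon$ on the annulus $A_\delta=B_{2\delta}\backslash B_\delta$. We take $g_s$ uniformly equivalent to $g_\alpha$ on $B_{2\delta}$, so the same lower bound holds up to a constant for $\nabla_{g_s}f_s$ on $A_\delta$.

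\textbf{Step 1 (lower bound).} Suppose a solution leaves $B_{2\delta}$. It starts and ends in $B_\delta$, so it must cross $A_\delta$ at least twice. On each crossing $\|\partial_s x\|=\|\nabla_{g_s}f_s\|\ge\epsilon$, which gives, exactly as in (\ref{E1}),
$$E(x)=\int_{-\infty}^{+\infty}\|\partial_s x\|_{g_s}^2\,ds\ge 2\epsilon\delta .$$

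\textbf{Step 2 (upper bound).} Compute the energy via the chain rule, now with an extra term from the $s$-dependence:
$$\frac{d}{ds}f_s(x(s))=-\|\partial_s x\|_{g_s}^2+(\partial_s f_s)(x(s)).$$
On $(-T,T)$ we have $\partial_s f_s=\gamma'(s)(f_\alpha-f_\beta)$, and it vanishes elsewhere. (Here $\gamma$ should be read as running from $0$ to $1$, or with the roles of $f_\alpha,f_\beta$ swapped; only $|\gamma'|$ matters.) Since $f_\alpha=f_\beta$ off $B_\delta$, we get
$$E(x)=f_\alpha(x^-)-f_\beta(x^+)+\int_{-T}^{T}\gamma'(s)\,(f_\alpha-f_\beta)(x(s))\,ds\le \big(f_\alpha(x^-)-f_\beta(x^+)\big)+\|f_\alpha-f_\beta\|_{L^\infty(B_\delta)}\int_{-T}^T|\gamma'|\,ds .$$
Lemma \ref{lemma 2.5.1} gives $|\gamma'|\le 1/T$, so the integral is at most $2$. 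More simply, monotonicity of $\gamma$ gives $\int|\gamma'|=1$, so the correction is bounded by $\|f_\alpha-f_\beta\|_\infty$ independently of $T$.

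\textbf{Step 3 (closing the estimate).} Now choose $f_\alpha,f_\beta$ from the approximating sequence (\ref{f lim}) with $\nu$ large. By Lemma \ref{lemma 1.1} applied to both functions, and since their critical values converge to $\tilde f(x_0)$, the term $f_\alpha(x^-)-f_\beta(x^+)$ is small. The $C^0$ convergence $f_\nu\to\tilde f$ makes $\|f_\alpha-f_\beta\|_\infty$ small as well. Hence $E(x)<2\epsilon\delta$, contradicting Step 1. The pure cases (\ref{ODE1}) and (\ref{ODE2}) are simply Lemma \ref{lemma 1.2}. Taking $\delta_0=\delta$ proves the lemma.

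\textbf{Main obstacle.} The delicate point is Step 3. The statement is only true for pairs $(f_\alpha,f_\beta)$ that are both sufficiently $C^0$-close to $\tilde f$; it fails for arbitrary Morse pairs in $\mathcal{F}^\delta_{Morse}$. So the lemma must be read with $f_\alpha,f_\beta$ chosen sufficiently close to $\tilde f$. General pairs are then handled by concatenating such continuation maps, passing through a sufficiently fine approximation.

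A secondary issue is making the lower bound in Step 1 uniform in $s$ despite the metric change. We handle it by choosing the family $g_s$ within a fixed compact set of metrics on $B_{2\delta}$, which gives a uniform constant $c$ with $\|\nabla_{g_s}\tilde f\|_{g_s}\ge c\,\epsilon$ and $\|\cdot\|_{g_s}$-distance across $A_\delta$ at least $c\,\delta$.
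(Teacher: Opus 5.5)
Your proposal is correct and follows the paper's proof. The paper likewise plays the lower bound on $E(x)$ from crossing the annulus $A_\delta$ against the identity $E(x)=f_\alpha(c_1)-f_\beta(c_2)+\int_{-T}^{T}(\partial_s f_s)(x(s))\,ds$, and makes the right-hand side small via Lemma~\ref{lemma 2.5.2}. That lemma only concerns $f_{\nu_1},f_{\nu_2}$ far out in the approximating sequence, which is exactly the closeness-to-$\tilde f$ restriction you make explicit. Your use of the monotonicity of $\gamma$ to get $\int_{-T}^{T}\gamma'\,ds=1$ is a cleaner substitute for the pointwise bound of Lemma~\ref{lemma 2.5.1}, and your uniform-equivalence condition on $g_s$ fills in a point the paper passes over. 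Your closing remark that arbitrary pairs can be reached by concatenation is not justified by this energy argument, but the lemma does not need it.
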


\begin{proposition}\label{weak compactness(s)}(Weak compactness of time dependent flow lines)
Let $x_\nu\in C^{\infty}(\mathbb{R}, M), \nu\in \mathbb{N}$ be solutions of equation (\ref{ODE(s)}), then there exist a subsequence $\nu_j$ and gradient flow line $x$, such that $$x_{\nu_j}\overset{C_{loc}^\infty}{\longrightarrow}x.$$ i.e. for any $R>0$,
$$x_{\nu_j}|_{[-R, R]}\overset{C^\infty}{\longrightarrow}x|_{[-R, R]}.$$
\end{proposition}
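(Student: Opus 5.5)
The plan is to repeat the argument of Proposition \ref{prop of weak compactness}, now allowing the vector field to depend on $s$. The one new input is the preceding lemma: every solution $x_\nu$ of (\ref{ODE(s)}) stays in the compact ball $B_{2\delta_0}$. The vector field $X(s,x)=-\nabla_{g_s}f_s(x)$ is built from two ingredients. The homotopy $f_s$ in (\ref{fs}) is a convex combination of $f_\alpha$ and $f_\beta$ with coefficient $\gamma(s)\in[0,1]$. The metrics $g_s$ form a smooth family that is constant for $|s|\ge T$. Consequently $X$ is continuous on $\mathbb{R}\times B_{2\delta_0}$ and equals one of two autonomous fields outside $[-T,T]\times B_{2\delta_0}$. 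On the compact set $[-T,T]\times B_{2\delta_0}$ it is bounded. Hence there is a constant $G$, independent of $\nu$ and $s$, with $\|\partial_s x_\nu(s)\|_{g_s}\le G$. Since the metrics $g_s$ are uniformly equivalent to a fixed metric on $B_{2\delta_0}$, this gives a uniform Lipschitz bound in a fixed metric. Together with the uniform bound coming from $x_\nu(s)\in B_{2\delta_0}$, Arzel\`a--Ascoli and a diagonal argument over $R=1,2,\dots$ produce a subsequence converging in $C^0_{loc}$ to some continuous $x:\mathbb{R}\to B_{2\delta_0}$.

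Next I upgrade the convergence to $C^1_{loc}$. Write $x_{\nu_j}(s)=x_{\nu_j}(s_0)+\int_{s_0}^s X(\sigma,x_{\nu_j}(\sigma))\,d\sigma$. Uniform continuity of $X$ on compact sets lets me pass to the limit, so $x$ satisfies the same integral equation. Thus $x$ is a $C^1$ solution of (\ref{ODE(s)}), and $\partial_s x_{\nu_j}=X(s,x_{\nu_j})\to X(s,x)=\partial_s x$ locally uniformly.

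For the higher derivatives I follow Step 2 of Proposition \ref{prop of weak compactness}, working in a chart around $x(s_0)$ and differentiating the ODE inductively:
$$\partial_s^k x_{\nu_j}=F_k\big(s,x_{\nu_j},\partial_s x_{\nu_j},\dots,\partial_s^{k-1}x_{\nu_j}\big).$$
Here $F_k$ now also involves $s$-derivatives of $X$, that is, derivatives of $\gamma$ and of $g_s$. By induction, $C^{k-1}_{loc}$ convergence then gives $C^k_{loc}$ convergence.

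I expect the main obstacle to be the regularity of $X$ in $s$. The induction needs $X$ to be $C^\infty$ in $(s,x)$, and that requires $\gamma$ and $s\mapsto g_s$ to be smooth, including at $s=\pm T$ and $s=0$. For $\gamma$ this holds because of the flat exponential gluing factors $e^{-T/((s+T)\delta)}$, though I would check the matching at $s=0$ explicitly. If $\gamma$ turned out to be only $C^1$ there, I would either replace it by a standard smooth cutoff satisfying the bound of Lemma \ref{lemma 2.5.1}, or restrict the conclusion to the $C^k_{loc}$ regularity that is actually available. Finally, I would note that the limit $x$ is a solution of (\ref{ODE(s)}), which is the sense of ``gradient flow line'' in the statement.
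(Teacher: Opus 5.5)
Your proposal is correct and follows the route the paper intends: the paper's own proof stops partway through Step~1, but it is meant to mirror Proposition~\ref{prop of weak compactness} (a uniform bound on $\nabla_{g_s}f_s$ over the compact ball, Arzel\`a--Ascoli, then bootstrapping through the ODE), and your integral-equation step for $C^1_{loc}$ convergence is more careful than the paper's. Your caution at $s=0$ is warranted: the paper's $\gamma$ is in general only $C^1$ there, and only $C^3$ even for $\delta=\frac{1}{2}$, so your fallback to a genuinely smooth monotone cutoff with $0\le\gamma'\le\frac{1}{T}$ is the right repair, as is your implicit reading of (\ref{fs}) with $\gamma$ and $1-\gamma$ interchanged, since as literally written $f_s$ tends to $f_\beta$ as $s\downarrow -T$ and so jumps at $s=\pm T$.
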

\begin{proof}
Step 1: The sequence $x_\nu$ is equicontinuous.
By (\ref{ODE(s)}),$$\nabla f_s\big(x(s)\big)=\frac{T-s}{2T}\nabla f_\alpha+\frac{T+s}{2T}\nabla f_\beta+\frac{1}{2T}f_\beta-\frac{1}{2T}f_\alpha.$$
 Since $f_\alpha$
\end{proof}

For $c_1\in\text{crit}f_\alpha, c_2\in\text{crit}f_\beta$, define linear map
\begin{equation}\label{phi}
\phi^{\beta\alpha}: CM_*^{loc}(f_\alpha)\rightarrow CM_*^{loc}(f_\beta),
\end{equation}
\begin{equation}
\begin{aligned}
&\phi^{\beta\alpha}(c_1)\\=&\sum_{\mu(c_1)=\mu(c_2)}\#_2 \{ x(s) \big| \partial_s x(s)+\nabla_{g_s}f_s=0, \lim_{s\rightarrow-\infty}x(s)=c_1, \lim_{s\rightarrow+\infty} x(s)=c_2\} c_2.
\end{aligned}
\end{equation}

Note that $$\mathcal{F}=\{ f\in L^{\infty}(M, \mathbb{R})|  f| _{M\backslash B_\delta}=\tilde f|_{M\backslash B_\delta}\},$$
$$\mathcal{F}_{Morse}=\{f\in \mathcal{F}: f|_{B_\delta} \text{ is a Morse function}\},$$
We know that $\mathcal{F}_{Morse}$ is dense in $\mathcal{F}$.
thus there exists a series of $f_\nu \in \mathcal{F}_{Morse}, \nu\in \mathbb{N}^*$, \begin{equation}\label{f lim}
f_\nu\overset{C^{\infty}}{\longrightarrow} \tilde f.
\end{equation}
In order to prove the time dependent flow lines behave locally, we should prove the following lemma. 
\begin{lemma}\label{lemma 2.5.2}
For any $\eta>0$, there exists $\nu_0$, such that for all $\nu_1,\nu_2>\nu_0$, we have $\max f_{\nu_1}|_{C_{\nu_1}}-\min f_{\nu_2}|_{C_{\nu_2}}<\eta$ and $|f_{\nu_1}(x)-f_{\nu_2}(x)|<\eta, \forall x\in B_\delta$, where $C_{\nu_i}=\{x\in B_\delta|\nabla f_{\nu_i (x)}=0\},i=1,2$.
\end{lemma}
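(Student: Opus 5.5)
Lemma \ref{lemma 2.5.2} makes two claims, and I will prove them separately; each uses only that $f_\nu \to \tilde f$ in $C^\infty$ on the compact set $B_\delta$ (by (\ref{f lim})) and that $x_0$ is the unique critical point of $\tilde f$ in $B_{2\delta}$ (because $\delta\in\Delta$).

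The second claim, $|f_{\nu_1}(x)-f_{\nu_2}(x)|<\eta$ for all $x\in B_\delta$, is immediate. $C^\infty$ convergence implies uniform ($C^0$) convergence on $B_\delta$, so the sequence $(f_\nu|_{B_\delta})$ is uniformly Cauchy. Hence there is $\nu_0'$ with $\sup_{B_\delta}|f_{\nu_1}-f_{\nu_2}|<\eta$ for all $\nu_1,\nu_2>\nu_0'$.

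For the first claim, I will not repeat the contradiction argument of Lemma \ref{lemma 1.1}. Instead I will prove a stronger localization statement: all critical values of $f_\nu$ in $B_\delta$ converge uniformly to $\tilde f(x_0)$.

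\emph{Step 1 (critical points concentrate at $x_0$).} Fix $r>0$. The set $K_r=B_\delta\setminus\{\|x-x_0\|<r\}$ is compact and contains no critical point of $\tilde f$. Hence $\min_{K_r}\|\nabla\tilde f\|=:m_r>0$. Since $\nabla f_\nu\to\nabla\tilde f$ uniformly, for $\nu$ large we have $\|\nabla f_\nu\|>m_r/2$ on $K_r$. So $C_\nu\subset\{\|x-x_0\|<r\}$ for all large $\nu$.

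\emph{Step 2 (critical values concentrate).} By continuity of $\tilde f$, choose $r$ so that $|\tilde f(x)-\tilde f(x_0)|<\eta/4$ whenever $\|x-x_0\|<r$. Choose $\nu$ so large that, in addition to Step 1, $\sup_{B_\delta}|f_\nu-\tilde f|<\eta/4$. Then for every $y\in C_\nu$ we get $|f_\nu(y)-\tilde f(x_0)|<\eta/2$.

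\emph{Step 3 (combine).} Let $\nu_0$ be the larger of $\nu_0'$ and the threshold from Step 2. For $\nu_1,\nu_2>\nu_0$, apply Step 2 to each index and use the triangle inequality:
\[
\max f_{\nu_1}|_{C_{\nu_1}}-\min f_{\nu_2}|_{C_{\nu_2}}<\big(\tilde f(x_0)+\eta/2\big)-\big(\tilde f(x_0)-\eta/2\big)=\eta .
\]
If some $C_\nu$ is empty, the corresponding max or min is vacuous and that case is trivial.

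The main obstacle is Step 1, the uniform localization of $C_\nu$. It depends on the gradient being bounded away from zero on $B_\delta$ minus a small ball, and that bound uses uniqueness of the critical point of $\tilde f$ in $B_{2\delta}$, i.e. $\delta\in\Delta$. This same mechanism also repairs the gap in Lemma \ref{lemma 1.1}: the limit points $x^1,x^2$ there are both forced to equal $x_0$, so $\tilde f(x^1)=\tilde f(x^2)$, rather than $\tilde f(x^i)=0$ as the earlier proof claims.
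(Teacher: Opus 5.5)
Your proof is correct, and it is organized differently from the paper's. The paper argues by contradiction for both claims. If either inequality fails for some $\eta_0$ along arbitrarily large $\nu_1,\nu_2$, it extracts convergent subsequences of the offending critical points (respectively of the offending points $\hat x$) and passes to the limit using $f_\nu\to\tilde f$. This yields two critical points of $\tilde f$ in $B_\delta$ with different values (respectively $0\ge\eta_0$), contradicting uniqueness of $x_0$.

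You argue directly instead. The second claim is just the uniform Cauchy property. For the first, you isolate an intermediate localization statement, $C_\nu\subset\{\|x-x_0\|<r\}$ for large $\nu$, which follows from the positive lower bound $m_r$ on $\|\nabla\tilde f\|$ over $B_\delta$ minus a small ball. You then conclude that every critical value of $f_\nu$ lies within $\eta/2$ of $\tilde f(x_0)$.

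The ingredients are the same in both arguments: compactness of $B_\delta$, $C^1$-convergence, and uniqueness of the critical point. Your route gives more, though. It produces an explicit $\nu_0$ in terms of $m_r$ and the modulus of continuity of $\tilde f$ at $x_0$. It also gives the sharper statement that all critical values of $f_\nu$ converge uniformly to $\tilde f(x_0)$, which is what the later energy estimate $E(x)<3\eta$ really uses. Finally, it makes transparent where uniqueness enters. Your closing remark applies verbatim to the paper's proof of this lemma, which repeats the slip $\tilde f(x_1)=\tilde f(x_2)=0$; the correct conclusion is $x^1=x^2=x_0$, hence $\tilde f(x^1)=\tilde f(x^2)$.

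The paper's contradiction argument is shorter and recycles the template of Lemma~\ref{lemma 1.1}, but it gives no quantitative control.
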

\begin{proof}
We assume the contradiction of the lemma, then there exists $\eta_0$, for all $\nu_0$, there exists $\nu_1,\nu_2\in \mathbb{N}$, such that $\max f_{\nu_1}|_{C_{\nu_1}}-\min f_{\nu_2}|_{C_{\nu_2}}\ge\eta_0$ or there exits $\hat{x}\in B_\delta$ such that $|f_{\nu_1}(x_{\nu_1\nu_2})-f_{\nu_2}(x_{\nu_1\nu_2})|\ge\eta$.

Assume $\max f_{\nu_1}|_{C_{\nu_1}}-\min f_{\nu_2}|_{C_{\nu_2}}>\eta_0$ is true. Since $\#C_{\nu_i}<\infty,i=1,2$, there exist $x_{\nu_1}^1\in C_{\nu_1}, x_{\nu_2}^2\in C_{\nu_2}, \nu_1,\nu_2 \in \mathbb{N}^*$, such that $f_{\nu_1} (x_{\nu_1}^1)-f_{\nu_2} (x_{\nu_2}^2)\ge\eta_0$. Since $x_{\nu_1}^1, x_{\nu_2}^2$ are in the bounded regin $B_\delta$, there exist convergent subsequence $x^1_{\nu_{1_j}}\rightarrow x^1, x^2_{\nu_{2_k}}\rightarrow x^2, j,k\in \mathbb{N}^*$. As a result, $\tilde f(x^1)-\tilde f(x^2)\ge\eta_0$, this implies $\tilde f(x^1)\neq \tilde f(x^2)$ . By (\ref{f lim}), we know that $x^1$ and $x^2$ are critical points of $\tilde f$, i.e. $\tilde f(x_1)=\tilde f(x_2)=0$. This contradict the fact that $\tilde f$ has an unique critical point in $B_\delta$.

So there exits $\hat{x}\in B_\delta$ such that $|f_{\nu_1}(x_{\nu_1\nu_2})-f_{\nu_2}(x_{\nu_1\nu_2})|\ge\eta$. This implies that there exists a convergent subsequence $x_{{\nu_1\nu_2}_j}\rightarrow x, j\in\mathbb{N}^*$, thus we get an impossible result $0=|\tilde{f}(x)-\tilde{f}(x)|\ge \eta$. Therefore the conclusion of the lemma holds.
\end{proof}

Similar as in lemma \ref{lemma 1.2}, for time dependent gradient flow lines, we have the following lemma.

\begin{lemma}
If $x(s)$ is the gradient flow line of the time dependent O.D.E
\begin{equation}\label{ODE(s)}
\partial_s x(s)+\nabla_{g_s}f_s\big(x(s)\big)=0, s\in\mathbb{R}.
\end{equation}
then $x(s)\in B_{2\delta}$.
\end{lemma}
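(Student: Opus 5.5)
We argue as in Lemma \ref{lemma 1.2}: an energy estimate that compares a lower bound, coming from crossing the annulus $A_\delta=B_{2\delta}\backslash B_\delta$, with an upper bound, coming from the smallness of the oscillation of the $f_\nu$. Take $f_\alpha=f_{\nu_1}$ and $f_\beta=f_{\nu_2}$ with $\nu_1,\nu_2>\nu_0$, where $\nu_0$ is given by Lemma \ref{lemma 2.5.2} for an $\eta$ to be chosen. Suppose, for contradiction, that a solution $x$ of (\ref{ODE(s)}) with $x(s)\to c_1\in\text{crit}f_\alpha$ and $x(s)\to c_2\in\text{crit}f_\beta$ leaves $B_{2\delta}$.

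\textbf{Lower bound.} All critical points of $f_\alpha$ and $f_\beta$ lie in $B_\delta$. Hence $x$ must cross $A_\delta$ at least twice, once going out and once coming back. On $A_\delta$ we have $f_s=\tilde f$ for every $s$, since both $f_\alpha$ and $f_\beta$ agree with $\tilde f$ outside $B_\delta$. So the convex combination coincides with $\tilde f$ there. Choose $\epsilon>0$ with $\|\nabla_{g_s}\tilde f\|\ge\epsilon$ on $A_\delta$, uniformly in $s$. This is possible because the metrics $g_s$ are uniformly equivalent: they equal $g_\alpha$ for $s\le -T$, equal $g_\beta$ for $s\ge T$, and form a compact family in between. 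Arguing exactly as in (\ref{E1}), with radial lengths measured in a fixed reference metric, gives
$$E(x):=\int_{-\infty}^{\infty}\|\partial_s x\|^2_{g_s}\,ds\ \ge\ 2c\,\epsilon\delta$$
for a constant $c>0$ depending only on the metric comparison.

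\textbf{Upper bound.} We compute the energy using the time dependence:
$$\frac{d}{ds}f_s(x(s))=(\partial_s f_s)(x(s))-\|\partial_s x\|^2_{g_s}.$$
Integrating gives
$$E(x)=f_\alpha(c_1)-f_\beta(c_2)+\int_{-T}^{T}(\partial_s f_s)(x(s))\,ds.$$
Since $\partial_s f_s=\gamma'(s)(f_\alpha-f_\beta)$ and $\gamma'\ge0$ with $\int_{-T}^{T}\gamma'\,ds=1$, the last integral is bounded by $\sup|f_\alpha-f_\beta|$. (One can also use Lemma \ref{lemma 2.5.1} to bound it by $2\sup|f_\alpha-f_\beta|$.) Lemma \ref{lemma 2.5.2} controls both $f_\alpha(c_1)-f_\beta(c_2)\le \max f_{\nu_1}|_{C_{\nu_1}}-\min f_{\nu_2}|_{C_{\nu_2}}<\eta$ and $\sup_{B_\delta}|f_{\nu_1}-f_{\nu_2}|<\eta$; outside $B_\delta$ the difference vanishes. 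Hence $E(x)<3\eta$. Choosing $\eta=c\,\epsilon\delta/2$ contradicts the lower bound, so $x(s)\in B_{2\delta}$ for all $s$.

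\textbf{Main obstacle.} The delicate point is the sign in the upper bound. Unlike the autonomous case, $E(x)$ is not a telescoping difference of critical values, and the correction term $\int\gamma'(f_\alpha-f_\beta)$ must be shown to be small. This works only because $f_\alpha$ and $f_\beta$ are both close to $\tilde f$ \emph{uniformly} on $B_\delta$; closeness of their critical values alone would not suffice. We therefore rely on the second half of Lemma \ref{lemma 2.5.2}, and on the monotonicity of $\gamma$, to keep this term of size $\eta$ independently of $T$.

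A secondary issue is the $s$-dependence of the metric in the lower bound, which we handle by the uniform comparison constant $c$ described above.
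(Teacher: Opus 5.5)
Your proposal is correct and is essentially the paper's proof: the lower bound $E(x)\ge 2c\epsilon\delta$ from crossing the annulus $A_\delta$, where $f_s=\tilde f$, is played against the time-dependent energy identity $E(x)=f_\alpha(c_1)-f_\beta(c_2)+\int_{-T}^{T}(\partial_s f_s)(x(s))\,ds<3\eta$, which comes from Lemma \ref{lemma 2.5.2}. Your handling of the $s$-dependent metrics, and your use of $\gamma'\ge 0$ and $\int_{-T}^{T}\gamma'\,ds=1$ instead of Lemma \ref{lemma 2.5.1}, are minor refinements of the same argument; the metric step is cleanest if the comparison constant is fixed before $\eta$ is chosen, so that $\epsilon$ and $c$ do not depend on $\nu_1,\nu_2$.
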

\begin{proof}
Assume the contradiction.
Firstly, similar as in the proof of Lemma \ref{lemma 1.2}, there exists $\epsilon_0>0$ such that $||\nabla f_s|_{A_\delta}||\ge \epsilon_0$.
As s result, we can prove that
\begin{equation}\label{E}
E(x)\geq \epsilon_0\delta.
\end{equation}

Secondly,
\begin{equation}
\begin{aligned}
E(x)&=\int^{\infty}_{-\infty}||\partial_s x||_{g_s}ds\\
&=-\int^{\infty}_{-\infty}g_s(\nabla_{g_s} f_s(x),\partial_s x)ds\\
&=-\int^{\infty}_{-\infty}df_s(x)\partial_s x ds\\
&=-\int^{\infty}_{-\infty}\frac{d}{ds}(f_s(x(s)))ds+\int^{\infty}_{-\infty}(\partial_s f_s)(x(s))ds\\
&=f_\alpha(c_1)-f_\beta(c_2)+\int^{T}_{-T}(\partial_s f_s)(x(s))ds\\
&=f_\alpha(c_1)-f_\beta(c_2)+\int^{T}_{-T}\gamma'(s)(f_\beta(x(s))-f_\alpha(x(s)))ds\\
\end{aligned}
\end{equation}

By Lemma $\ref{lemma 2.5.1}$ and Lemma $\ref{lemma 2.5.2}$, we get
\begin{equation}
\begin{aligned}
E(x)<3\eta.
\end{aligned}
\end{equation}
We fix $\eta=\frac{1}{3}\epsilon_0\delta$ in Lemma \ref{lemma 2.5.2}, and get $$E(x)<\epsilon_0\delta.$$

This contradicts (\ref{E}) and we get the conclusion.

\end{proof}

Now we begin to define the Floer-Gromov convergence for time dependent gradient flow lines.
\begin{definition}
A broken gradient flow line from $c_1$ to $c_2$ of $\nabla_{g_s}f_s$ is a tuple
$$y=\{x^k\}_{1\le k\le n}, n\in\mathbb{N},$$
such that

(i) $\exists k_0\in\{1,...,n\}$ such that $x^{k_0}$ is gradient flow line of $\nabla_{g_s} f_s$.

(ii) $x^k$ for $1\le k<k_0$ is a nonconstant gradient flow line of $\nabla_{g_\alpha} f_{\alpha}$,
$x^k$ for $k_0<k\le n$ is a nonconstant gradient flow line of $\nabla_{g_\beta}f_\beta$.

(iii) $\lim\limits_{s\rightarrow-\infty} x^1(s)=c_1, \lim\limits_{s\rightarrow\infty}x^k(s)=\lim\limits_{s\rightarrow-\infty}x^{k+1}(s), \lim\limits_{s\rightarrow\infty}x^n(s)=c_2, k\in\{1,...,n-1\}$
\end{definition}

By a proof similar to Proposition \ref{prop of weak compactness}, we have the following proposition.

\begin{proposition}(Weak compactness for time dependent gradient flow lines)
Let $x_\nu\in C^{\infty}(\mathbb{R}, B_{2\delta}), \nu\in \mathbb{N}$ be solutions of equation (\ref{ODE(s)}), then there exist a subsequence $\nu_j$ and a gradient flow line $x$, such that $$x_{\nu_j}\overset{C_{loc}^\infty}{\longrightarrow}x.$$ i.e. for any $R>0$,
$$x_{\nu_j}|_{[-R, R]}\overset{C^\infty}{\longrightarrow}x|_{[-R, R]}.$$
\end{proposition}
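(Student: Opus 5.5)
The proof will follow the two-step scheme of Proposition \ref{prop of weak compactness}: first a uniform $C^0$ bound with equicontinuity to obtain $C^0_{loc}$ convergence via Arzel\`a--Ascoli, then a bootstrap to $C^\infty_{loc}$ using the ODE (\ref{ODE(s)}). The only new feature is the explicit $s$-dependence of $f_s$ and $g_s$. This dependence is harmless because it is confined to the compact interval $[-T,T]$, and outside it the family is constant in $s$.

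\textbf{Step 1: uniform bounds.} By the preceding lemma on time dependent flow lines, every solution $x_\nu$ satisfies $x_\nu(s)\in B_{2\delta}$, which is compact. The map $(s,y)\mapsto \nabla_{g_s}f_s(y)$ is continuous on $\mathbb{R}\times B_{2\delta}$. For $|s|\ge T$ it coincides with $\nabla_{g_\alpha}f_\alpha$ or $\nabla_{g_\beta}f_\beta$. On $[-T,T]\times B_{2\delta}$ it is a continuous function on a compact set, since $\gamma$ is smooth and $g_s$ is a smooth family. Hence there is a constant $G$ with $\|\nabla_{g_s}f_s(y)\|\le G$ for all $s\in\mathbb{R}$ and $y\in B_{2\delta}$. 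By (\ref{ODE(s)}) this gives $\|\partial_s x_\nu\|\le G$ uniformly in $\nu$, so the sequence is equi-Lipschitz, with $d(x_\nu(s_1),x_\nu(s_2))\le G|s_1-s_2|$. Arzel\`a--Ascoli on each $[-R,R]$, combined with a diagonal argument over $R\in\mathbb{N}$, then yields a subsequence $x_{\nu_j}\to x$ in $C^0_{loc}$.

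\textbf{Step 2: bootstrap.} Next I will show that $x$ solves (\ref{ODE(s)}). Writing the ODE in integral form in a chart,
$$x_{\nu_j}(s)=x_{\nu_j}(s_0)-\int_{s_0}^s \nabla_{g_\sigma}f_\sigma(x_{\nu_j}(\sigma))\,d\sigma,$$
uniform continuity of $(\sigma,y)\mapsto\nabla_{g_\sigma}f_\sigma(y)$ on compact sets lets me pass to the limit. This shows that $x$ is $C^1$ and solves (\ref{ODE(s)}), and also that $\partial_s x_{\nu_j}\to\partial_s x$ in $C^0_{loc}$. By induction, differentiating (\ref{ODE(s)}) $k$ times in local charts gives
$$\partial_s^{k}x_{\nu_j}=F_k\big(s,x_{\nu_j},\dots,\partial_s^{k-1}x_{\nu_j}\big),$$
where $F_k$ is continuous and involves $s$-derivatives of $f_s$ and $g_s$ up to order $k-1$. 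Convergence in $C^{k-1}_{loc}$ therefore implies convergence in $C^k_{loc}$, and hence $x_{\nu_j}\to x$ in $C^\infty_{loc}$.

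\textbf{Main obstacle.} The delicate point is the bootstrap in Step 2, which needs $F_k$ to be continuous in $s$ for every $k$. This requires $s\mapsto f_s$ to be $C^\infty$, including at $s=\pm T$ and $s=0$ where $\gamma$ is glued piecewise. One must check that all derivatives of $\gamma$ match across these points. I would first check this directly using the flatness of $e^{-1/t}$-type functions at the endpoints. If the matching at $s=0$ fails, I would replace $\gamma$ by a standard smooth monotone cutoff with $\gamma'\le 2/T$ instead, which changes only constants in the energy estimate from Lemma \ref{lemma 2.5.1}.
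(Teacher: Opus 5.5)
Your proposal is correct and follows the route the paper intends: the paper only says the result follows by a proof similar to Proposition~\ref{prop of weak compactness}, namely a uniform gradient bound on the compact $B_{2\delta}$, then equicontinuity and Arzel\`a--Ascoli, then an inductive bootstrap through the ODE, and you carry this out with the $s$-dependence handled explicitly. Your caution about $\gamma$ is well founded and your fallback is genuinely needed: the paper's $\gamma$ is not $C^\infty$ at $s=0$ (even with $\delta=\frac{1}{2}$ the fourth derivatives from the two sides disagree), and its formula on $(-T,T)$ has $\gamma$ and $1-\gamma$ interchanged relative to the ends, so $f_s$ as written even jumps at $s=\pm T$, which your standard smooth monotone cutoff with the correct end values fixes.
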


\begin{definition}(Floer-Gromov convergence for time dependent flow lines)
$x_\nu\in C^{\infty}(\mathbb{R}, B_{2\delta_0})$ is a sequence of gradient flow lines of $\nabla_{g_s} f_s$ satisfying $$\lim\limits_{s\rightarrow-\infty}x_\nu(s)=c_1, \lim\limits_{s\rightarrow+\infty}x_\nu=c_2$$ and $y=\{x^k\}_{1\le k\le n}$ is a broken flow line of $\nabla_{g_s}f_s$ from $c_1$ to $c_2$.

$x_v$ Floer-Gromov converges to $y$ if there exists a sequence $\Gamma_\nu^k\in\mathbb{R}$ for $1\le k\le n$ such that $$\Gamma_\nu^kx_\nu\overset{C_{loc}^\infty}{\longrightarrow}x^k.$$ 
\end{definition}

\begin{theorem}(Floer-Gromov compactness for time dependent gradient flow lines)
$x_\nu$ is a sequence of gradient flow lines of $\nabla_{g_s}f_s$ such that
$$\lim\limits_{s\rightarrow-\infty}x_\nu(s)=c_1, \lim\limits_{s\rightarrow+\infty}x_\nu=c_2.$$
Then there exist a subsequence $v_j$ and a broken gradient flow line $y=\{x^k\}_{1\le k\le n}$ from $c_1$ to $c_2$ such that $$x_{v_j}\xrightarrow{Floer-Gromov} y.$$
\end{theorem}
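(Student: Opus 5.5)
We prove the Floer–Gromov compactness theorem for time dependent flow lines by adapting the inductive argument in the proof of Theorem 1. The one new feature is that the sequence of shifts $\Gamma_\nu^k$ must locate the unique nonautonomous piece $x^{k_0}$. All flow lines stay in the compact set $B_{2\delta}$ by the locality lemma for solutions of (\ref{ODE(s)}), and each of $f_\alpha, f_\beta$ has finitely many critical points by Lemma \ref{lemma 2.1}. We may therefore use the weak compactness proposition for time dependent flow lines, and Proposition \ref{prop of weak compactness} for $f_\alpha$ and $f_\beta$, freely.

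\textbf{Central piece.} We first take $\Gamma_\nu^{k_0}=0$. By the weak compactness for time dependent flow lines, a subsequence $x_{\nu_j}$ converges in $C^\infty_{loc}$ to a solution $x^{k_0}$ of (\ref{ODE(s)}). The energy identity in the locality lemma bounds $E(x^{k_0})$. For $s\le -T$ the curve $x^{k_0}$ solves (\ref{ODE1}), and for $s\ge T$ it solves (\ref{ODE2}). Applying Lemma \ref{lemma 2.2} to these autonomous ends gives limits $x^{k_0}(s)\to a\in\text{crit} f_\alpha$ as $s\to-\infty$ and $x^{k_0}(s)\to b\in\text{crit} f_\beta$ as $s\to+\infty$. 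The piece $x^{k_0}$ may be constant, which is allowed.

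\textbf{Forward part.} If $b\neq c_2$, we argue exactly as in Case 2 of the proof of Theorem 1. Choose $V\ni b$ with $\overline V\cap\text{crit} f_\beta=\{b\}$, pick $s_0\ge T$ with $x^{k_0}(s)\in V$ for $s\ge s_0$, and let $R_j$ be the first exit time after $s_0$. Then $R_j\to\infty$. Set $\Gamma_j=s_0+R_j\to+\infty$. The shifted curves $(\Gamma_j)_*x_{\nu_j}$ solve $\partial_s x+\nabla_{g_{s+\Gamma_j}}f_{s+\Gamma_j}=0$, and on any compact $s$-interval this equation coincides with the $f_\beta$-flow for $j$ large. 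Hence Proposition \ref{prop of weak compactness} gives a limit which is a nonconstant (it passes through $\partial V$) $f_\beta$-flow line. The argument of Claim 2 there shows it starts at $b$. We iterate this step. It terminates because the $f_\beta$-values strictly decrease along the pieces, so critical points cannot repeat, and $\#\text{crit} f_\beta<\infty$.

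\textbf{Backward part.} If $a\neq c_1$, we run the symmetric construction with last entrance times and $\Gamma\to-\infty$. This produces nonconstant $f_\alpha$-pieces, and we prepend them. Concatenating the backward pieces, $x^{k_0}$, and the forward pieces gives the broken line $y$.

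\textbf{Main obstacle.} The delicate step is proving that the process ends exactly at $c_2$ (and $c_1$), with no energy escaping. A sequence could drift toward a critical point that is not yet $c_2$ while the loss is not captured by the exit-time construction. As in Theorem 1, we handle this with the exit-time argument: as long as the last piece does not end at $c_2$, the curves $x_{\nu_j}$ must leave $V$, because they converge to $c_2\notin\overline V$. This guarantees that each new piece is found. To pass to limits in the shifted equations, we need the uniform $C^0$ bound on $\nabla_{g_s}f_s$ over $B_{2\delta}$, which holds since $\gamma'$ is bounded by Lemma \ref{lemma 2.5.1}.
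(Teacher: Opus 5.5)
Your proposal is correct and follows the paper's route. The paper's proof says only that the argument is analogous to the exit-time induction in the proof of Theorem \ref{thm of Floer-Gromov compactness} plus an energy bound; you carry that adaptation out explicitly by anchoring the nonautonomous piece at shift $0$, reading off its endpoints from the autonomous ends via Lemma \ref{lemma 2.2}, and running the exit-time construction forward for $f_\beta$ and backward for $f_\alpha$.

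One small correction: the uniform $C^0$ bound on $\nabla_{g_s}f_s$ over $B_{2\delta}$ comes from $0\le\gamma\le 1$ and compactness of $[-T,T]\times B_{2\delta}$. It does not come from Lemma \ref{lemma 2.5.1}, which bounds $\gamma'$ rather than $\gamma$.
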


\begin{proof}
This is analogous to the proof of Theorem \ref{thm of Floer-Gromov compactness}, the time independent case. The only extra property needed is the uniform bound on energy we will prove in the next lemma.
\end{proof}



Define $\mathcal{N}(f_s, g_s; c_1, c_2)$ as the moduli space of gradient flow lines of $\nabla_{g_s}f_s$ from $c_1$ to $c_2$.

\begin{proposition}\label{N}
For generic homotopy, $\mathcal{N}(f_g, g_s; c_1, c_2)$ is a maninfold of
$$\text{dim} \mathcal{N}(f_s, g_s; c_1, c_2)=\mu(c_1)-\mu(c_2).$$

(i) If $\mu(c_1)=\mu(c_2)$, then $\mathcal{N}(f_s, g_s; c_1, c_2)$ is a finite set.

(ii) If $\mu(c_1)=\mu(c_2)+1$, then $\mathcal{N}$ can be compactified to a $1$-dim manifold with boundary $\overline{\mathcal{N}}(f_s, g_s; c_1, c_2)$ such that
\begin{equation}\label{DN}
\begin{aligned}
\partial \overline{\mathcal{N}}(f_s, g_s; c_1, c_2)=&\bigsqcup_{c\in \text{crit}(f_\alpha), \mu(c_1)-1=\mu(c)=\mu(c_2)}\mathcal{M}(f_\alpha, g_\alpha; c_1, c)\times\mathcal{N}(f_s, g_s; c, c_2)\\ &\sqcup \bigsqcup_{c\in\text{crit}(f_\beta),\mu(c_1)=\mu(c)=\mu(c_2)+1}\mathcal{N}(f_s, g_s; c_1, c)\times\mathcal{M}(f_\beta, g_\beta; c, c_2).
\end{aligned}\end{equation}
\end{proposition}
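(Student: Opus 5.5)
We prove Proposition \ref{N} with the same Fredholm and transversality machinery used for the time independent moduli spaces. The one new input is the localization lemma for solutions of (\ref{ODE(s)}): every time dependent flow line stays in $B_{2\delta}$, so all analysis happens on the compact set $B_{2\delta}$.

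\textbf{Step 1: Fredholm setup.} Take the Hilbert manifold $\mathcal{H}=\mathcal{H}_{c_1,c_2}$ of $W^{1,2}$ paths from $c_1$ to $c_2$, with charts as before, and the $L^2$ bundle $\mathcal{E}$. Define the section $s(x)=\partial_s x+\nabla_{g_s}f_s(x)$.
\begin{itemize}
\item Its vertical differential has the form $D\xi=\partial_s\xi+A(s)\xi$.
\item For $s\le -T$, $A(s)$ is the Hessian of $f_\alpha$ at $c_1$ (up to a term decaying along $x$); for $s\ge T$, it is the Hessian of $f_\beta$ at $c_2$.
\item Both limits are nondegenerate because $f_\alpha,f_\beta$ are Morse. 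The standard spectral flow result for asymptotically hyperbolic operators (as in \cite{Audin}) then shows $D$ is Fredholm of index $\mu(c_1)-\mu(c_2)$.
\end{itemize}
The $s$-dependence creates no obstruction, since the index depends only on the asymptotic operators.

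\textbf{Step 2: transversality for generic homotopies.} Vary the metric homotopy $g_s$ with $s\in(-T,T)$, keeping the ends $g_\alpha,g_\beta$ fixed. Consider the universal section $S(\{g_s\},x)$ and show $DS$ is surjective, as in Proposition \ref{prop of DSg}.
\begin{itemize}
\item Suppose $\eta$ annihilates the image. Then $\eta$ solves the adjoint equation, so it is continuous, and if it vanishes on an open set it vanishes identically by ODE uniqueness.
\item The identity (\ref{integral of h}) now holds with $h=h(s)$ supported in $(-T,T)$.
\item If $\nabla_{g_s}f_s(x(s_0))\ne 0$ for some $s_0\in(-T,T)$, Lemma \ref{Auxiliary lemma} and a bump function in $s$ give a contradiction. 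Hence $\eta$ vanishes near $s_0$, and so $\eta\equiv0$.
\item If $\nabla f_s(x(s))=0$ on all of $(-T,T)$, then $\partial_s x=0$ there. One handles this case by a generic perturbation of $f_s$ in the interior, or by noting that such constant solutions form a nongeneric set.
\end{itemize}
Sard's theorem for the projection $\Pi$, as in Lemma \ref{Dsg is surjective}, together with Taubes' argument, then gives a second category set of homotopies for which $\mathcal{N}$ is a manifold of dimension $\mu(c_1)-\mu(c_2)$. There is no $\mathbb{R}$ quotient, because (\ref{ODE(s)}) is not translation invariant.

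\textbf{Step 3: compactness and (i).} Apply the Floer-Gromov compactness theorem for time dependent flow lines. Any limit is a broken line consisting of:
\begin{itemize}
\item $\alpha$-pieces, each dropping the index by at least $1$ by transversality;
\item one $s$-dependent piece, of index at least $0$;
\item $\beta$-pieces, each dropping the index by at least $1$.
\end{itemize}
If $\mu(c_1)=\mu(c_2)$, the total index is $0$, so no breaking can occur. Hence $\mathcal{N}$ is a compact $0$-manifold, i.e.\ a finite set, which proves (i).

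\textbf{Step 4: case (ii) and the main obstacle.} If $\mu(c_1)=\mu(c_2)+1$, index counting allows at most one breaking: either an $\alpha$-piece of index $1$ followed by an $s$-piece of index $0$, or an $s$-piece followed by a $\beta$-piece. This gives exactly the two families in (\ref{DN}). The main obstacle is the converse, a gluing theorem: each broken pair is the limit of a unique end of $\mathcal{N}$. The plan is to use Newton iteration around a pre-glued path, with gluing parameter $R\to\infty$; the required uniform right inverse comes from the surjectivity in Steps 1–2, and exponential convergence at the hyperbolic critical point controls the error. Alternatively, one can argue finite-dimensionally with stable and unstable manifolds and the $\lambda$-lemma near the intermediate critical point $c$, which is likely easier here since everything lives in $B_{2\delta}$. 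Combining compactness with gluing yields the compactified $1$-manifold $\overline{\mathcal{N}}$ with boundary (\ref{DN}).
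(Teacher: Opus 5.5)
The paper gives no proof of this proposition. It is stated and used immediately; the only supporting remark is that time-dependent Floer--Gromov compactness is ``analogous'' to Theorem~\ref{thm of Floer-Gromov compactness}. Your outline is the standard proof the paper implicitly relies on, and it is sound: the Fredholm index comes from the asymptotic Hessians, transversality comes from a universal moduli space modelled on Proposition~\ref{prop of DSg}, index counting is done on Floer--Gromov limits, and the boundary is identified by gluing. In two places you are rightly more careful than the paper.

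\textbf{Constant solutions.} Your last bullet in Step~2 is genuinely needed, not a formality. At a constant solution $x\equiv p$ one has $L_{x,g}h=-g^{-1}hg^{-1}df(p)=0$, so the universal section gains nothing from varying $g_s$. Now take the paper's fixed interpolation (\ref{fs}), and suppose $f_\alpha$ and $f_\beta$ share a critical point $p$ with $\mu_{f_\alpha}(p)<\mu_{f_\beta}(p)$. This is easily arranged, e.g.\ $\tilde f=x^3$, $f_\alpha=x^3+\epsilon\chi x^2$, $f_\beta=x^3-\epsilon\chi x^2$ with a cutoff $\chi$. Then $p$ is critical for every $f_s$. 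So for every metric homotopy the constant path lies in a moduli space of negative virtual dimension $\mu_{f_\alpha}(p)-\mu_{f_\beta}(p)$.

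Hence the homotopy of functions must also be perturbed in the interior. Your alternative, ``such constant solutions form a nongeneric set'', is true only in the space of function homotopies, not of metric homotopies. When you perturb $f_s$ by $k_s$, keep $k_s$ supported in $B_\delta$ and keep $\int_{-T}^{T}\max|\partial_s k_s|\,ds$ small. The localization lemma for (\ref{ODE(s)}) rests on the energy identity $E(x)=f_\alpha(c_1)-f_\beta(c_2)+\int(\partial_s f_s)(x(s))\,ds$ together with the annulus lower bound on $\nabla\tilde f$. Both survive such perturbations, and since this is an open condition, genericity is not lost.

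\textbf{Gluing.} The other substantive step is the gluing in Step~4, which you leave as a plan. The paper does not supply it either; its only gluing is the sketch in the proof of Proposition~\ref{functionality}. Of your two routes, the finite-dimensional one fits this setting best. A solution is determined by $x(-T)$, and $\mathcal{N}(f_s,g_s;c_1,c_2)\cong W^u_{f_\alpha}(c_1)\cap\Psi^{-1}\big(W^s_{f_\beta}(c_2)\big)$, where $\Psi$ is the flow of (\ref{ODE(s)}) from time $-T$ to $T$. Transversality, the dimension count, and the boundary description (via the $\lambda$-lemma at the intermediate critical point $c$) then all become statements about intersecting submanifolds of $B_{2\delta}$.
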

From (i) of Proposition \ref{N}, we know that the linear map (\ref{phi}) is well defined. From (\ref{DN}), we can get the equation
$$0=\phi^{\beta\alpha}\partial_\alpha+\partial_\beta\phi^{\beta\alpha}(\mod 2)$$
This equation is equivalent to the equation
\begin{equation}\label{chain map}
\phi^{\beta\alpha}\partial_\alpha=\partial_\beta\phi^{\beta\alpha}(\mod 2).
\end{equation}
i.e. $\phi^{\beta\alpha}$ is a chain map between chain complexes
   $(CM_*^{loc}(f_\alpha), \partial_\alpha)$ and
$(CM_*^{loc}(f_\beta),\partial_\beta)$.
\begin{proposition}
The linear map $\phi^{\beta\alpha}$ in (\ref{phi}) induces linear map
$$\Phi^{\beta\alpha}: HM_*^{loc}(f_\alpha, g_\alpha)\rightarrow HM_*^{loc}(f_\beta, g_\beta).$$
\end{proposition}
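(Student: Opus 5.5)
Since $\phi^{\beta\alpha}$ satisfies the chain map identity (\ref{chain map}), the statement follows from the standard fact that a chain map between chain complexes induces a map on homology. I would set
$$\Phi^{\beta\alpha}([z]) := [\phi^{\beta\alpha}(z)], \qquad z\in\ker\partial_\alpha\subset CM_*^{loc}(f_\alpha),$$
and check the two things required for this to be a well defined linear map $HM_*^{loc}(f_\alpha,g_\alpha)\to HM_*^{loc}(f_\beta,g_\beta)$.

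First, $\phi^{\beta\alpha}$ maps cycles to cycles. If $\partial_\alpha z=0$, then (\ref{chain map}) gives $\partial_\beta\phi^{\beta\alpha}(z)=\phi^{\beta\alpha}\partial_\alpha z=0$. Also, $\phi^{\beta\alpha}$ preserves degree, because by definition (\ref{phi}) it only counts pairs $(c_1,c_2)$ with $\mu(c_1)=\mu(c_2)$. Hence $\phi^{\beta\alpha}(\ker\partial_{\alpha,k})\subset\ker\partial_{\beta,k}$.

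Second, $\phi^{\beta\alpha}$ maps boundaries to boundaries. If $z=\partial_\alpha w$, then $\phi^{\beta\alpha}(z)=\partial_\beta\phi^{\beta\alpha}(w)\in\operatorname{im}\partial_\beta$. Consequently, two representatives $z,z'$ of the same class, with $z-z'\in\operatorname{im}\partial_\alpha$, have images that differ by an element of $\operatorname{im}\partial_\beta$, so the class $[\phi^{\beta\alpha}(z)]$ does not depend on the representative. Linearity over $\mathbb{Z}_2$ is inherited from $\phi^{\beta\alpha}$, which is defined on generators and extended linearly.

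The only genuine input is that $\phi^{\beta\alpha}$ is a finite, well defined sum and satisfies (\ref{chain map}). Both come from Proposition \ref{N}. Part (i) gives finiteness of $\mathcal{N}(f_s,g_s;c_1,c_2)$ in equal index. Part (ii), through the boundary formula (\ref{DN}) and the fact that a compact 1-manifold has an even number of boundary points, gives $\phi^{\beta\alpha}\partial_\alpha+\partial_\beta\phi^{\beta\alpha}=0 \pmod 2$. I would invoke these rather than reprove them; the step that needs care, namely that the homotopy flow lines stay in $B_{2\delta}$ so that compactness holds, is already handled by the preceding energy lemma. So the main obstacle lies upstream, and the proposition itself is the purely algebraic observation above.
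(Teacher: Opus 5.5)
Your proposal is correct and follows essentially the same argument as the paper, which also uses the chain-map identity $\phi^{\beta\alpha}\partial_\alpha=\partial_\beta\phi^{\beta\alpha}$ to show that $\phi^{\beta\alpha}$ sends $\partial_\alpha$-cycles to $\partial_\beta$-cycles and $\operatorname{im}\partial_\alpha$ into $\operatorname{im}\partial_\beta$. From this it concludes that the class $[\phi^{\beta\alpha}\xi]$ is independent of the representative; your added remarks on degree preservation and $\mathbb{Z}_2$-linearity only make explicit what the paper leaves implicit.
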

\begin{proof}

Choose $\xi\in CM_*(f_\alpha)$ such that $\ker \partial^\alpha\xi=0$, then by (\ref{chain map}) we have
$$\partial^\beta\phi^{\beta\alpha}\xi=\phi^{\beta\alpha}\partial^\alpha\xi=0,$$
so $\phi^{\beta\alpha}\xi$ lies in the kernal of $\partial^\beta$ and represents a homology class in $H_*(f_\beta, g_\beta)$.

Assume $\gamma\in \text{im}\partial_\alpha$, then there exists $\eta\in CM_*^{loc}(f_\alpha) $, such that $\partial_\alpha \eta=\gamma$. By (\ref{chain map}), we have
$$\phi^{\beta\alpha}(\gamma)=\phi^{\beta\alpha}(\partial_\alpha\eta)=\partial_\beta\phi^{\beta\alpha}\eta\in \text{im}\partial_\beta,$$
so the class $[\phi^{\beta\alpha}\xi]\in HM_*^{loc}(f_\beta, g_\beta)$ does not depend on the the choice of $\xi$ as a representation of its homology class $HM_*^{loc}(f_\alpha, g_\alpha)$.
\end{proof}

\begin{proposition}
$\Phi^{\beta\alpha}$ is independent of the homotopy $(f_s, g_s)$.
\end{proposition}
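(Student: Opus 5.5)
The approach is the standard homotopy-of-homotopies argument, adapted to the local setting. Let $(f_s^0,g_s^0)$ and $(f_s^1,g_s^1)$ be two admissible homotopies from $(f_\alpha,g_\alpha)$ to $(f_\beta,g_\beta)$, both of the form (\ref{fs}), (\ref{gs}). They induce chain maps $\phi_0^{\beta\alpha}$ and $\phi_1^{\beta\alpha}$. I will construct a chain homotopy
$$K: CM^{loc}_*(f_\alpha)\rightarrow CM^{loc}_{*+1}(f_\beta)$$
satisfying
$$\phi_1^{\beta\alpha}-\phi_0^{\beta\alpha}=\partial_\beta K+K\partial_\alpha \pmod 2.$$
This identity shows that $\Phi^{\beta\alpha}$ does not depend on the homotopy chosen.

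\textbf{The two-parameter family.} Take a smooth family $(f_{s}^\lambda,g_s^\lambda)$, $\lambda\in[0,1]$, interpolating between the two homotopies. For every $\lambda$ the family should agree with $(f_\alpha,g_\alpha)$ for $s\le -T$ and with $(f_\beta,g_\beta)$ for $s\ge T$. I would use
$$f_s^\lambda=\gamma(s)\big((1-\lambda)f^0_\alpha+\lambda f^1_\alpha\big)+\dots,$$
or more simply interpolate the cutoff functions. It is essential to keep every $f_s^\lambda$ built from functions taken from the same approximating sequence $f_\nu$ of Lemma \ref{lemma 2.5.2}, and to keep $|\partial_s\gamma|\le 1/T$ uniformly in $\lambda$ via Lemma \ref{lemma 2.5.1}.

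\textbf{Key steps.} Define the parametrized moduli space
$$\mathcal{N}^{par}(c_1,c_2)=\{(\lambda,x)\,:\,\partial_s x+\nabla_{g_s^\lambda}f_s^\lambda(x)=0,\ x(\pm\infty)=c_{1,2}\}.$$
\emph{Locality.} First show that every such $x$ stays in $B_{2\delta}$. I would repeat the energy argument of the lemma preceding the time-dependent Floer--Gromov compactness theorem. The energy equals $f_\alpha(c_1)-f_\beta(c_2)$ plus $\int\partial_s f_s^\lambda$, so it is bounded by $3\eta$ uniformly in $\lambda$. This contradicts the lower bound $\epsilon_0\delta$ that a trajectory must pay to cross the annulus $A_\delta$.
\emph{Compactness.} Next, weak compactness and Floer--Gromov compactness follow as before, now with $\lambda$ also converging along a subsequence, because $[0,1]$ is compact.
\emph{Transversality.} For a generic family of metrics $g_s^\lambda$, obtained by perturbing $g$ on $(-T,T)\times[0,1]$ as in Proposition \ref{prop of DSg} and Lemma \ref{Dsg is surjective}, the space $\mathcal{N}^{par}$ is a manifold of dimension $\mu(c_1)-\mu(c_2)+1$.

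\textbf{Defining $K$ and the boundary count.} Set
$$K(c_1)=\sum_{\mu(c_2)=\mu(c_1)+1}\#_2\mathcal{N}^{par}(c_1,c_2)\,c_2.$$
This is a finite count, since that space is compact and $0$-dimensional. When $\mu(c_1)=\mu(c_2)$, the space $\mathcal{N}^{par}$ is $1$-dimensional, and its compactification has boundary of four kinds:
\begin{itemize}
\item $\{0\}\times\mathcal{N}_0(c_1,c_2)$;
\item $\{1\}\times\mathcal{N}_1(c_1,c_2)$;
\item $\mathcal{M}(f_\alpha;c_1,c)\times\mathcal{N}^{par}(c,c_2)$;
\item $\mathcal{N}^{par}(c_1,c)\times\mathcal{M}(f_\beta;c,c_2)$.
\end{itemize}
A compact $1$-manifold has an even number of boundary points, so the mod $2$ count of this boundary vanishes. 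That vanishing is exactly the chain homotopy identity above.

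\textbf{Main obstacle.} The hard step is the gluing/compactness description of the boundary of $\overline{\mathcal{N}^{par}}$, together with parametrized transversality. Neither was fully carried out even for the one-parameter case in Proposition \ref{N}. I would invoke the standard Morse-theoretic gluing theorem for nondegenerate breakings, as in \cite{Audin}. The only genuinely new input is locality, which ensures all trajectories stay in $B_{2\delta}$ so that the global theory applies verbatim on that compact ball.
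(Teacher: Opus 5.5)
Your proposal is correct and is essentially the paper's proof: a homotopy of homotopies, a parametrized moduli space whose isolated elements with $\mu(c_2)=\mu(c_1)+1$ define the chain homotopy (the paper's $T$, your $K$), and the four-type boundary of the one-dimensional parametrized space, which gives $\phi_0^{\beta\alpha}+\phi_1^{\beta\alpha}=\partial_\beta K+K\partial_\alpha \pmod 2$. Your energy argument showing that trajectories stay in $B_{2\delta}$ uniformly in $\lambda$ (obtained by interpolating the cutoff functions) is a step the paper's proof leaves implicit, so it is a useful addition rather than a deviation.
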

\begin{proof}
Assume $(f_s^0, g_s^0)$ and $(f_s^1, g_s^1)$ are two homotopies from $(f_\alpha, g_\alpha)$ to $(f_\beta, g_\beta)$. Consider homotopy of these two homotopies
$$(f_s^r, g_s^r), r\in[0,1].$$
Let $c_1\in \text{crit} f_\alpha$, $c_2\in \text{crit} f_\beta$, we introduce the moduli space
$$\begin{aligned}
&\mathcal{R}(f_s^r, g_s^r; c_1, c_2)\\
=&\{(x,r)| r\in[0,1], \partial_s x+\nabla_{g_s^r}f_s^r(x)=0, \lim_{s\rightarrow-\infty}x(s)=c_1,\lim_{s\rightarrow\infty} x(s)=c_2.\}\end{aligned}$$
$$\text{dim}\mathcal{N}(f_s^r,g_s^r; c_1, c_2)=\mu(c_1)-\mu(c_2)+1$$
Assume $\mu(c_1)=\mu(c_2)$, then $\text{dim} \mathcal{R}(f_s^r, g_s^r; c_1, c_2)=1$.
For generic homotopies of homotopies
$\mathcal{R}(f_s^r, g_s^r; c_1, c_2)$ can be compactified to compact $1$-dim manifold with boundary $\bar{\mathcal{R}}(f_s^r, g_s^r; c_1, c_2)$.

There exist finite $R\in[0,1]$, such that
\begin{equation}\label{partial R}
\begin{aligned}
&\partial\bar{\mathcal{R}}(f_s^r, g_s^r; c_1, c_2)\\
=&\mathcal{R}(f_s^0,g_s^0; c_1, c_2)\sqcup\mathcal{R}(f_s^1,g_s^1;c_1,c_2)\\
&\sqcup\bigsqcup_{r_0\in R, c\in\text{crit}_{k+1}f_\alpha,\mu(c)=\mu(c_1)-1=\mu(c_2)-1}\mathcal{R}(f_\alpha, g_\alpha; c_1,c)\times\mathcal{N}(f_s^{r_0},g_s^{r_0};c,c_2)\\
&\sqcup\bigsqcup_{r_1\in R,c\in\text{crit}_{k-1}f_\beta,\mu(c)=\mu(c_1)+1=\mu(c_2)+1}\mathcal{N}(f_s^{r_1},g_s^{r_1};c_1,c)\times\mathcal{R}(f_\beta,g_\beta; c, c_2).
\end{aligned}
\end{equation}

For $c_1\in \text{crit}(f_\alpha)$, define linear map $T: CM_*^{loc}(f_\alpha)\rightarrow CM_{*+1}^{loc}(f_\beta)$,
$$T(c_1)=\sum_{c_2\in\text{crit}(f_\beta),\mu(c_2)=\mu(c_1)+1}\#_2(\bigsqcup_{r_0\in R}\mathcal{N}(f_s^{r_0},g_s^{r_0}; c_1, c_2))c_2.$$
From (\ref{partial R}), we get

$$0=\phi^{\beta\alpha}_{(f_s^0,g_s^0)}+\phi^{\beta\alpha}_{(f_s^1,g_s^1)}+\partial_\beta T+T\partial_\alpha,$$
and then
$$\phi_0^{\beta\alpha}=\phi_1^{\beta\alpha}+\partial_\beta T+T\partial_\alpha.$$
i.e. $\phi_0^{\beta\alpha}$ is chain homotopy equivalent to $\phi_1^{\beta\alpha}$.
As a result, $\Phi_0^{\beta\alpha}=\Phi_1^{\beta\alpha}$.
\end{proof}

\begin{proposition}\label{functionality}
$(f_\alpha, g_\alpha), (f_\beta, g_\beta), (f_\gamma, g_\gamma)$ are Morse-Smale pairs, then $\Phi^{\gamma\alpha}= \Phi^{\gamma\beta}\circ\Phi^{\beta\alpha}: HM_*^{loc}(f_\alpha, g_\alpha)\rightarrow HM_*^{loc}(f_\gamma, g_\gamma)$.
\end{proposition}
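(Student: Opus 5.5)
The plan is the standard gluing/homotopy-of-homotopies argument from Morse homology, transplanted to the local setting. The additional input is that every flow line involved stays inside $B_{2\delta}$, which will again come from the energy estimate.

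First, fix homotopies $(f_s^{\beta\alpha},g_s^{\beta\alpha})$ from $(f_\alpha,g_\alpha)$ to $(f_\beta,g_\beta)$ and $(f_s^{\gamma\beta},g_s^{\gamma\beta})$ from $(f_\beta,g_\beta)$ to $(f_\gamma,g_\gamma)$, both of the form (\ref{fs}) with $\gamma'(s)\le \frac{1}{T}$ as in Lemma \ref{lemma 2.5.1}. For $R>0$ large, define the concatenated homotopy
$$f^R_s=\begin{cases} f^{\beta\alpha}_{s+R}, & s\le 0,\\ f^{\gamma\beta}_{s-R}, & s\ge 0,\end{cases}$$
and similarly $g^R_s$. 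This is a homotopy from $(f_\alpha,g_\alpha)$ to $(f_\gamma,g_\gamma)$. All functions involved are taken from the sequence $f_\nu$ with $\nu>\nu_0$ in Lemma \ref{lemma 2.5.2}. Then $\partial_s f^R_s$ is supported on two intervals of length $2T$, and on each of them the integrand is bounded by $\frac{1}{T}\eta$. Hence the energy identity from the time dependent locality lemma gives $E(x)<5\eta$ for any flow line of $\nabla_{g^R_s}f^R_s$. Choosing $\eta=\frac15\epsilon_0\delta$, the contradiction argument of Lemma \ref{lemma 1.2} shows that these flow lines stay in $B_{2\delta}$, uniformly in $R$.

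Next, since Proposition \ref{N} and the independence of $\Phi^{\gamma\alpha}$ from the homotopy are already available, it suffices to show that for $R$ large
$$\phi^{\gamma\alpha}_{R}=\phi^{\gamma\beta}\circ\phi^{\beta\alpha}$$
on chain level, where $\phi^{\gamma\alpha}_R$ is defined using $(f_s^R,g_s^R)$. Equivalently, for $\mu(c_1)=\mu(c_3)$ one needs a bijection
$$\mathcal{N}(f^R_s,g^R_s;c_1,c_3)\cong\bigsqcup_{c_2\in \text{crit} f_\beta,\ \mu(c_2)=\mu(c_1)}\mathcal{N}(f^{\beta\alpha}_s,g^{\beta\alpha}_s;c_1,c_2)\times\mathcal{N}(f^{\gamma\beta}_s,g^{\gamma\beta}_s;c_2,c_3).$$
The proof splits into two directions. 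For compactness: take $R_\nu\to\infty$ and solutions $x_\nu$. Apply the weak compactness proposition for time dependent flow lines to the shifts $x_\nu(\cdot-R_\nu)$ and $x_\nu(\cdot+R_\nu)$, and apply the Floer--Gromov argument of Theorem \ref{thm of Floer-Gromov compactness} in between. Any limit is then a broken configuration consisting of $f_\alpha$-lines, one $(f^{\beta\alpha}_s)$-line, $f_\beta$-lines, one $(f^{\gamma\beta}_s)$-line, and $f_\gamma$-lines. By the index count from Corollary \ref{s^-1 manifold} and Proposition \ref{N} (the unbroken pieces have dimension $\ge 0$, and nonconstant autonomous lines have dimension $\ge1$), the only index-$0$ possibility is exactly one line from each homotopy, meeting at some $c_2$ with $\mu(c_2)=\mu(c_1)$. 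For the converse direction, a gluing/implicit function theorem argument produces, for each such pair, a unique nearby solution for $R$ large. Counting mod $2$ then gives the chain-level identity, and passing to homology yields $\Phi^{\gamma\alpha}=\Phi^{\gamma\beta}\circ\Phi^{\beta\alpha}$.

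The main obstacle is the gluing step. It requires a uniformly bounded right inverse for the linearized operator $\partial_s+\nabla_{g^R_s}\nabla f^R_s$ on the preglued trajectory. I would try to obtain it from the surjectivity (transversality) of the two pieces, exactly as in Proposition \ref{prop of DSg}, combined with the exponential decay at the nondegenerate critical point $c_2$ obtained in Case 2 of that proof. Throughout, the locality estimate above guarantees that everything happens in $B_{2\delta}$, so the standard compact-manifold argument applies verbatim.
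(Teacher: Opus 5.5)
Your proposal is correct and follows the same route as the paper. Both concatenate the two homotopies with a long $f_\beta$-stretch, preglue $u\in\mathcal{N}(f^{\beta\alpha}_s,g^{\beta\alpha}_s;c_1,c_2)$ and $v\in\mathcal{N}(f^{\gamma\beta}_s,g^{\gamma\beta}_s;c_2,c_3)$, and use the implicit function theorem to get the chain-level identity $\phi^{\gamma\alpha}_R=\phi^{\gamma\beta}\circ\phi^{\beta\alpha}$ for large $R$. Your $R$-independent $5\eta$ energy bound (which keeps the concatenated-homotopy trajectories local) and your compactness/index-count argument (which shows the glued solutions exhaust the index-zero moduli space) are steps the paper's sketch omits, and they are what make the count a genuine bijection.
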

\begin{proof}
Choose a generic homotopy $(f_s^{\beta\alpha}, g_s^{\beta\alpha})$ between $(f_\alpha, g_\alpha)$ and $(f_\beta, g_\beta)$,  $(f_s^{\gamma\beta}, g_s^{\gamma\beta})$ between $(f_\beta, g_\beta)$  and $(f_\gamma, g_\gamma)$.
There exists $T>0$, such that
$$f_s^{\beta\alpha}=\left\{\begin{matrix}
f_\alpha, s\le -T,\\
f_\beta, s\ge T.
\end{matrix}\right.$$
$$f_s^{\gamma\beta}\left\{
\begin{matrix}
f_\beta, s\le -T,\\
f_\gamma, s\ge T.
\end{matrix}
\right.$$
$$\begin{aligned}&\Phi^{\gamma\beta}\circ\Phi^{\beta\alpha}(x)\\=&\sum_{z\in\text{crit}f_\gamma,\mu(z)=\mu(y)}\sum_{y\in\text{crit}f_\beta,\mu(y)=\mu(x)}\#_2
(\mathcal{N}(f^{\beta\alpha}_s,g_s^{\beta\alpha}; x,y)\times(\mathcal{N}(f_s^{\gamma\beta},g_s^{\gamma\beta};y,z))z.\end{aligned}$$
Let $u\in \mathcal{N}(f^{\beta\alpha}_s,g_s^{\beta\alpha}; x,y)$, $v\in \mathcal{N}(f_s^{\gamma\beta},g_s^{\gamma\beta};y,z)$, now we glue $u$ and $v$ together.
$$u\#_R v: \mathbb{R}\rightarrow M,$$
$$u\#_R v(s)=\left\{
\begin{aligned}
&u(s+T+R), s\le -R,\\
&u(\zeta_1(s)), -R\le s\le 0,\\
&y, s=0,\\
&u(\zeta_2(s)), 0\le s\le R,\\
&v(s-T-R), s\ge R.
\end{aligned}
\right.$$
Where $$\zeta_1: [-R,0)\rightarrow [T,+\infty)$$
and $$\zeta_2: (0,R]\rightarrow(-\infty,-T]$$
are smooth monotonic bijections.

Let $$f_s^{\gamma\alpha, R}=\left\{
\begin{aligned}
&f^{\beta\alpha}_{s+(T+R)}, s\le-R,\\
&f_\beta, -R\le s\le R,\\
&f^{\gamma\beta}_{s-(T+R)}, s\ge R.
\end{aligned}
\right.$$
Then $u\#_R v$ is a gradient flow line of
$\nabla_{g_s}^{\gamma\alpha,R}f_s^{\gamma\alpha,R}$ for $s\le -R$ and $s\ge R$ and almost gradient flow line for $s\in[-R, R]$. Using implicit function theorem we can get an actual gradient flow line close by .
Therefore $$\phi^{\gamma\beta}_{(f_s^{\gamma\beta},g_s^{\gamma\beta})}\circ\phi^{\beta\gamma}_{(f_s^{\beta\gamma},g_s^{\beta\alpha})}=\phi^{\gamma\alpha}_{(f_s^{\gamma\alpha,R},g_s^{\gamma\alpha,R})},$$
and
$$\Phi^{\gamma\beta}\circ\Phi^{\beta\alpha}=\Phi^{\gamma\alpha}: HM_*^{loc}(f_\alpha,g_\alpha)\rightarrow HM_*^{loc}(f_\gamma,g_
\gamma).$$

\end{proof}

\begin{proposition}
If $\Phi^{\alpha\alpha}: HM_*^{loc}(f_\alpha,g_\alpha)\rightarrow HM_*^{loc}(f_\alpha, g_\alpha)$, then $\Phi^{\alpha\alpha}=id$.
\end{proposition}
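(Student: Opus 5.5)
We prove that $\Phi^{\alpha\alpha}$ is the identity by choosing a particular homotopy. The previous proposition says $\Phi^{\beta\alpha}$ does not depend on the homotopy $(f_s,g_s)$, so we may compute $\Phi^{\alpha\alpha}$ using the constant homotopy $f_s\equiv f_\alpha$, $g_s\equiv g_\alpha$ for all $s\in\mathbb{R}$. This fits the construction (\ref{fs}), since $\gamma(s)f_\alpha+(1-\gamma(s))f_\alpha=f_\alpha$. It also satisfies the locality lemma for time dependent flow lines trivially: the correction term $\int_{-T}^{T}\gamma'(s)(f_\beta-f_\alpha)\,ds$ vanishes. Hence every such flow line lies in $B_{2\delta}$ and the energy identity of Lemma \ref{lemma 1.2} applies unchanged.

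With this homotopy, equation (\ref{ODE(s)}) becomes the autonomous equation (\ref{equ1}). So $\mathcal{N}(f_\alpha,g_\alpha;c_1,c_2)=\tilde{\mathcal{M}}(f_\alpha,g_\alpha;c_1,c_2)$, the space of \emph{parametrised} flow lines; there is no quotient by $\mathbb{R}$, because $\phi^{\beta\alpha}$ counts solutions of (\ref{ODE(s)}) directly. Take $\mu(c_1)=\mu(c_2)$. If $c_1\neq c_2$, the space is empty: since $(f_\alpha,g_\alpha)$ is Morse--Smale, Corollary \ref{s^-1 manifold} gives $\dim\tilde{\mathcal{M}}(f_\alpha,g_\alpha;c_1,c_2)=\mu(c_1)-\mu(c_2)=0$. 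But a nonempty moduli space with $c_1\neq c_2$ carries a free $\mathbb{R}$-action, so its dimension would be at least $1$, a contradiction. If $c_1=c_2$, then $\tilde{\mathcal{M}}$ consists of the single constant flow line, as noted after the definition of the moduli spaces. This point is transversally cut out by Case 2 of Proposition \ref{prop of DSg}: the operator $D_A=\partial_s+A$ with $A$ nondegenerate is an isomorphism. So it is counted once, giving $\#_2=1$.

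Therefore $\phi^{\alpha\alpha}(c_1)=c_1$ for every $c_1\in\text{crit} f_\alpha$, so $\phi^{\alpha\alpha}=\mathrm{id}$ on $CM^{loc}_*(f_\alpha)$ and $\Phi^{\alpha\alpha}=\mathrm{id}$ on homology.

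The main obstacle is whether the constant homotopy is admissible. Proposition \ref{N} asserts its conclusions only for a \emph{generic} homotopy, while the constant one is very special. We handle this by verifying regularity directly rather than invoking genericity. For nonconstant solutions, the linearisation $D_{s_{g_\alpha}}$ is surjective because $g_\alpha\in\mathcal{M}_{reg}(f_\alpha)$ (Lemma \ref{Dsg is surjective}). For constant solutions, surjectivity holds by the Case 2 computation. Hence the constant homotopy is itself regular, and the well-definedness and homotopy-independence results apply to it. As an alternative, one could compare the constant homotopy with a nearby generic one through the homotopy-of-homotopies argument of the previous proposition.
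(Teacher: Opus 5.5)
Your proposal is correct and follows essentially the same route as the paper. Both use the constant homotopy, identify $\mathcal{N}(f_\alpha,g_\alpha;x,y)$ with $\tilde{\mathcal{M}}(f_\alpha,g_\alpha;x,y)$, rule out $y\neq x$ by a dimension count (the paper via $\dim\mathcal{M}=-1$, you via the free $\mathbb{R}$-action on a $0$-dimensional space), and count the single constant solution when $y=x$. Your extra verification that the constant homotopy is itself regular (via Lemma~\ref{Dsg is surjective} and Case~2 of Proposition~\ref{prop of DSg}) fills in a point the paper leaves implicit.
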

\begin{proof}
Choose a constant homotopy
$$\begin{aligned}&f_s=f_\alpha,\\
&g_s=g_\alpha, s\in \mathbb{R}.
\end{aligned}$$
Then
$$\phi^{\alpha\alpha}(x): CM_*^{loc}(f_\alpha)\rightarrow CM_*^{loc}(f_\alpha).$$
$$\phi^{\alpha\alpha}(x)=\sum_{y\in\text{crit}f_\alpha,\mu(y)=\mu(x)}\#_2\mathcal{N}(f_\alpha, g_\alpha; x, y)y.$$
Where
$$\mathcal{N}(f_\alpha,g_\alpha; x, y)=\tilde{\mathcal{M}}(f_\alpha, f_\alpha; x, y).$$
and $$\mathcal{M}(f_\alpha, f_\alpha; x, y)=\tilde{\mathcal{M}}(f_\alpha, f_\alpha; x, y)/\mathbb{R}$$
If $y\neq x$, then
$$\dim \mathcal{M}(f_\alpha, g_\alpha; x, y)=\mu(x)-\mu(y)-1=-1,$$
thus $$\mathcal{M}(f_\alpha, f_\alpha; x, y)=\emptyset$$ and $$\tilde{\mathcal{M}}(f_\alpha, f_\alpha; x, y)=\emptyset.$$
If $y=x$, then$$\tilde{\mathcal{M}}(f_\alpha, f_\alpha; x, y)=\{x\}.$$
Therefore, $$\#_2\tilde{M}(f_\alpha, g_\alpha; x, y)=\left\{\begin{matrix}0,y\neq x,\\1,y=x.\end{matrix}\right.$$
As a result, $$\phi^{\alpha\alpha}=id: CM_*^{loc}(f_\alpha)\rightarrow CM_*^{loc}(f_\alpha),$$
and $$\Phi^{\alpha\alpha}=id: HM_*^{loc}(f_\alpha, g_\alpha)\rightarrow HM_*^{loc}(f_\alpha, g_\alpha).$$
\end{proof}

\begin{proposition}
$\Phi^{\beta\alpha}: HM_*^{loc}(f_\alpha, g_\alpha)\rightarrow HM_*^{loc}(f_\beta, g_\beta)$ is an isomorphism.
\end{proposition}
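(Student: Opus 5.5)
The plan is the standard functoriality argument, using only the two preceding propositions. By Proposition \ref{functionality}, applied to the triple $(f_\alpha,g_\alpha),(f_\beta,g_\beta),(f_\alpha,g_\alpha)$, we get
$$\Phi^{\alpha\beta}\circ\Phi^{\beta\alpha}=\Phi^{\alpha\alpha}.$$
The preceding proposition gives $\Phi^{\alpha\alpha}=id$ on $HM_*^{loc}(f_\alpha,g_\alpha)$. Hence $\Phi^{\alpha\beta}\circ\Phi^{\beta\alpha}=id$.

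Exchanging the roles of $\alpha$ and $\beta$, we apply Proposition \ref{functionality} to the triple $(f_\beta,g_\beta),(f_\alpha,g_\alpha),(f_\beta,g_\beta)$. Using $\Phi^{\beta\beta}=id$, this gives $\Phi^{\beta\alpha}\circ\Phi^{\alpha\beta}=id$ on $HM_*^{loc}(f_\beta,g_\beta)$. So $\Phi^{\beta\alpha}$ has the two-sided inverse $\Phi^{\alpha\beta}$ and is an isomorphism.

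The key steps, in order, are the following.

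\emph{Step 1.} Check that the reverse homotopy from $(f_\beta,g_\beta)$ to $(f_\alpha,g_\alpha)$ is admissible, i.e.\ that its time dependent flow lines stay in $B_{2\delta}$. We take $\delta=\max\{\delta_\alpha,\delta_\beta\}$ so that both functions lie in a common $\mathcal F^\delta_{Morse}$. The energy argument (Lemma \ref{lemma 2.5.1} and Lemma \ref{lemma 2.5.2}) is symmetric in $\alpha$ and $\beta$, so $\Phi^{\alpha\beta}$ is defined in the same way as $\Phi^{\beta\alpha}$.

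\emph{Step 2.} Check that the identities above hold for the maps actually built from chosen homotopies. The proposition stating that $\Phi^{\beta\alpha}$ is independent of the homotopy $(f_s,g_s)$ is what makes this work. The glued homotopy $f_s^{\alpha\alpha,R}$ produced in Proposition \ref{functionality} represents $\Phi^{\alpha\alpha}$. The constant homotopy, which gives the identity, is another representative. By that independence the two induced maps on homology agree.

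The main obstacle is that the two homotopies being compared are not literally the same. One is the concatenation $\alpha\to\beta\to\alpha$; the other is the constant homotopy at $\alpha$. The homotopy of homotopies $(f^r_s,g^r_s)$ joining them must keep all flow lines local, otherwise the chain homotopy $T$ would count trajectories leaving $B_{2\delta}$. To handle this I would interpolate linearly in $r$ between the two homotopies. I would then reuse the energy estimate
$$E(x)\le f(c_1)-f(c_2)+\int\gamma'(s)\,|f_\beta-f_\alpha|\,ds,$$
taking the members of the approximating sequence $f_\nu$ far enough out that Lemma \ref{lemma 2.5.2} keeps every term below $\frac13\epsilon_0\delta$ uniformly in $r$.
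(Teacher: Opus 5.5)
Your proof is correct and follows the paper's route: applying Proposition \ref{functionality} to the round trips $\alpha\to\beta\to\alpha$ and $\beta\to\alpha\to\beta$, together with $\Phi^{\alpha\alpha}=id$ and $\Phi^{\beta\beta}=id$, shows that $\Phi^{\alpha\beta}$ is a two-sided inverse of $\Phi^{\beta\alpha}$. Your Steps 1 and 2 and the locality check for the homotopy of homotopies make explicit two points the paper leaves implicit: that the glued homotopy and the constant homotopy induce the same map on homology, and that the interpolating flow lines stay in $B_{2\delta}$. They do not change the argument.
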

\begin{proof}
By proposition \ref{functionality}, we get
$$\Phi^{\alpha\beta}\circ\Phi^{\beta\alpha}=\Phi^{\alpha\alpha}=id: HM_*^{loc}(f_\alpha, g_\alpha)\rightarrow HM_*^{loc}(f_\alpha, g_\alpha).$$
$$\Phi^{\beta\alpha}\circ\Phi^{\alpha\beta}=\Phi^{\beta\beta}=id: HM_*^{loc}(f_\beta, g_\beta)\rightarrow HM_*^{loc}(f_\beta, g_\beta).$$
Therefore $$\Phi^{\alpha\beta}: HM_*^{loc}(f_\alpha, g_\alpha)\rightarrow HM_*^{loc}(f_\beta, g_\beta)$$ is an isomorphism with inverse
$$(\Phi^{\alpha\beta})^{-1}=\Phi^{\beta\alpha}.$$
\end{proof}

\begin{theorem}\label{Homotopy invarance}
Local Morse Homology is invariant under a small perturbation of the smooth function if all the critical points of the perturbed smooth function are still isolated.
\end{theorem}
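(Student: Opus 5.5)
Let $\tilde f_0$ denote the original function with isolated critical point $x_0$, and $\tilde f_1$ a small perturbation whose critical points near $x_0$ are again isolated. The statement is that the local Morse homology of $\tilde f_0$ at $x_0$ equals the sum of the local Morse homologies of the critical points $y_1,\dots,y_m$ of $\tilde f_1$ lying in $B_\delta$. My plan is to show that both are the homology of one chain complex $CM^{loc}_*(f,g)$, built from a single Morse function $f$ that approximates both $\tilde f_0$ and $\tilde f_1$.

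First I fix $\delta\in\Delta(\tilde f_0,x_0)$. I choose the perturbation $\tilde f_1$ so small in $C^\infty(B_{2\delta})$ that $\|\nabla \tilde f_1\|\ge \epsilon/2$ on $A_\delta=B_{2\delta}\backslash B_\delta$, where $\|\nabla\tilde f_0\|\ge\epsilon$ there. Then all critical points of $\tilde f_1$ in $B_{2\delta}$ lie in $B_\delta$, and they are finitely many because they are isolated in a compact set.

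Next I take Morse functions $f_\nu\to\tilde f_1$ in $C^\infty$ that agree with $\tilde f_1$ outside $B_\delta$. For $\nu$ large and $\tilde f_1$ close to $\tilde f_0$, the function $f_\nu$ is also a Morse perturbation of $\tilde f_0$ in the sense of Lemma \ref{lemma 1.2}. To see this, I rerun the energy argument. Any flow line leaving $B_{2\delta}$ crosses $A_\delta$ twice, so its energy is at least $\epsilon\delta$. On the other hand its energy is at most $\max f_\nu|_{C_\nu}-\min f_\nu|_{C_\nu}$. Every critical value of $f_\nu$ is close to a critical value of $\tilde f_1$, and by continuity every critical value of $\tilde f_1$ in $B_\delta$ is close to $\tilde f_0(x_0)$. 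Hence this oscillation is small, as in Lemma \ref{lemma 1.1}. This shows $f_\nu\in\mathcal{F}^\delta_{Morse}(\tilde f_0)$, so with a generic metric $g$ the complex $CM^{loc}_*(f_\nu,g)$ computes $HM^{loc}_*(\tilde f_0,x_0)$. By the invariance results of Section \ref{section 2.5}, this does not depend on the choices made.

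Then I split this same complex according to the critical points $y_j$ of $\tilde f_1$. I choose disjoint balls $B_{2\delta_j}(y_j)\subset B_\delta$ with $\delta_j\in\Delta(\tilde f_1,y_j)$, and take $\nu$ large enough that $f_\nu$ lies in every $\mathcal{F}^{\delta_j}_{Morse}(\tilde f_1,y_j)$ at once. The critical points of $f_\nu$ then fall into groups $C^{(j)}$, one group near each $y_j$. By Lemma \ref{lemma 1.2}, flow lines between two points of the same group stay in $B_{2\delta_j}(y_j)$. It remains to rule out flow lines between different groups.

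That step is the main obstacle. A flow line from group $j$ to group $k\neq j$ must cross an annulus $A_{\delta_j}$ around $y_j$ on which $\|\nabla\tilde f_1\|$, and hence $\|\nabla f_\nu\|$, is bounded below. So its energy is bounded below by a constant $c>0$ that does not depend on $\nu$. Its energy also equals $f_\nu(x^-)-f_\nu(x^+)$, which tends to $\tilde f_1(y_j)-\tilde f_1(y_k)$. This difference is not obviously smaller than $c$.

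My first attempt is to use the weak compactness and Floer–Gromov compactness arguments of Theorem \ref{thm of Floer-Gromov compactness} for $f_\nu$ with a uniform energy bound. Passing to the limit, such lines would give a nonconstant flow line of $\tilde f_1$ joining $y_j$ to $y_k$. To exclude this, I would shrink the perturbation so that $\tilde f_1$ has all its critical values in $B_\delta$ within $c$ of each other, giving a contradiction. The difficulty is that $c$ itself depends on $\tilde f_1$. If this circularity cannot be broken, I would weaken the conclusion. Either I would assume a single critical point survives, or I would argue via the chain homotopy $\phi^{\beta\alpha}$ between the $\tilde f_0$ and $\tilde f_1$ complexes, using the homotopy of Section \ref{section 2.5}. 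In either case the result is that $HM^{loc}_*(\tilde f_0,x_0)\cong\bigoplus_j HM^{loc}_*(\tilde f_1,y_j)$. When the group complexes do split, the boundary operator of $CM^{loc}_*(f_\nu,g)$ is block diagonal, and the isomorphism follows immediately.
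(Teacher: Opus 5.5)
\textbf{The gap is the conclusion itself.} You aim for $HM^{loc}_*(\tilde f_0,x_0)\cong\bigoplus_j HM^{loc}_*(\tilde f_1,y_j)$, and for $m\ge 2$ this is false. So the ``main obstacle'' you isolate cannot be removed by any choice of smallness, and the sentence ``in either case the result is \dots'' is unjustified.

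Counterexample: take $M=\mathbb{R}$, $\tilde f_0(x)=x^3$, $x_0=0$, and $\tilde f_1(x)=x^3-\epsilon\chi(x)x$, with a cutoff $\chi\equiv 1$ near $0$ and $\epsilon>0$ small.
\begin{itemize}
\item In $B_\delta$, $\tilde f_1$ has exactly two critical points, both isolated: a nondegenerate maximum $y_1=-\sqrt{\epsilon/3}$ and a nondegenerate minimum $y_2=\sqrt{\epsilon/3}$. Your direct sum is therefore $\mathbb{Z}_2$ in degrees $0$ and $1$.
\item But $HM^{loc}_*(\tilde f_0,0)=0$. The unique flow line from $y_1$ to $y_2$ gives $\partial y_1=y_2$; equivalently, the Morse perturbation $x^3+\epsilon x$ has no critical points at all.
\end{itemize}
This inter-group flow line exists for every $\epsilon>0$, while your annulus constant $c$ tends to $0$ as the $y_j$ merge. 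That is exactly the circularity you noticed, and it is not an artifact of the method.

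The chain-map fallback via $\phi^{\beta\alpha}$ has the same problem. It only identifies $HM^{loc}_*(\tilde f_0,x_0)$ with the homology of the full complex $CM_*(f_\nu,g)$ on $B_{2\delta}$, and that differential need not be block diagonal. In general only the Euler characteristics add up over the $y_j$.

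\textbf{What is true, and what the paper proves, is the single-continuation case.} In the paper's notation, the perturbed function $\tilde f_2$ has one isolated critical point $x_2$ near $x_1$. The paper obtains $HM^{loc}(\tilde f_1,x_1)=HM^{loc}(\tilde f_2,x_2)$ by working on a common neighborhood $N$ and invoking independence from the Morse--Smale pair and the neighborhood (Theorem~\ref{thm of homology iso}). This is also the situation in the application, where the number of critical points stays five along $U^t$.

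For that case your first step is the right mechanism, and it is more explicit than the paper's appeal to ``the proof of Lemma~\ref{lemma 1.2}''. When $m=1$:
\begin{itemize}
\item all critical points of $f_\nu$ converge to $y_1$;
\item the energy argument keeps every connecting flow line inside $B_{2\delta_1}(y_1)\subset B_{2\delta}(x_0)$;
\item hence the complexes computing $HM^{loc}_*(\tilde f_0,x_0)$ and $HM^{loc}_*(\tilde f_1,y_1)$ literally coincide, and no splitting is needed.
\end{itemize}

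So restrict your statement to $m=1$. Also take $\tilde f_1-\tilde f_0$ supported in $B_\delta$, so that $f_\nu$ agrees with $\tilde f_0$ outside $B_\delta$, as membership in $\mathcal{F}$ requires.
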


\begin{proof}

Assume $x_1$ is a critical point of the smooth function $\tilde f_1$, i.e.
$$\nabla \tilde f_1(x_1)=0.$$
Note that $$\Delta(\tilde f,x_0)=\{\delta| x_0 \text{ is the unique critical point of } \tilde f \text{ in } B_{2\delta}\}.$$
We choose $\delta_1\in \Delta(\tilde f_1,x_1)$ and perturb $\tilde f_1$ small enough to get a smooth function $\tilde f_2$ with an isolate critical point $x_2$ near $x_1$ such that $x_2\in B(x_1,2\delta)$. Let $\delta=\max\{\delta_1,\delta_2\}$ and $$N=B(x_1,2\delta)\cap B(x_2,2\delta).$$
Let $(f_1,g_1)$ and $(f_2,g_2)$ be Morse-Smale pairs of $\tilde f_1$ and $\tilde f_2$ respectively.
From the proof of Lemma \ref{lemma 1.2}, we know that local Morse homology only depends on the properties of the Morse function and the neighborhood, does not depend on the isolated critical point of the smooth function, namely, it does not depend on the choice of $x_1$ and $x_2$ here. By Theorem \ref{thm of homology iso}, local Morse homology on N does not depend on the choice of Morse-Smale pairs and the shape of the neighborhood. Therefore, $HM^{loc}(\tilde f_1,g_1,x_1,\delta)=HM^{loc}(\tilde f_2,g_2,x_2,\delta)$.
\end{proof}

By the compactness of the homotopy and Heine-Borel theorem we can get the following theorem directly.
\begin{theorem}(Homotopy invariance of local Morse homology)\label{Homotopy invariance}
Local Morse homology is invariant under a homotopy of smooth functions if the critical points of the smooth functions are always isolated under the homotopy.
\end{theorem}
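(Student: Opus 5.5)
The plan is to reduce Theorem \ref{Homotopy invariance} to the local statement Theorem \ref{Homotopy invarance} by a compactness argument in the homotopy parameter. Let $\tilde f_\tau$, $\tau\in[0,1]$, be a smooth homotopy and let $x_\tau$ be a continuous path of critical points with $x_\tau$ isolated for every $\tau$. For each $\tau$, I will fix $\delta_\tau\in\Delta(\tilde f_\tau,x_\tau)$, so that $x_\tau$ is the unique critical point of $\tilde f_\tau$ in $B(x_\tau,2\delta_\tau)$.

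The key step is to show that for every $\tau$ there is an open interval $I_\tau\ni\tau$ such that, for all $\tau'\in I_\tau$, the function $\tilde f_{\tau'}$ counts as a small perturbation of $\tilde f_\tau$ in the sense of Theorem \ref{Homotopy invarance}. This requires three things.
\begin{enumerate}
\item $x_{\tau'}\in B(x_\tau,2\delta_\tau)$. This follows from continuity of $\tau\mapsto x_\tau$.
\item $x_{\tau'}$ is the only critical point of $\tilde f_{\tau'}$ in a smaller ball around $x_\tau$. This holds because $\|\nabla\tilde f_\tau\|\ge\epsilon>0$ on the annulus $A_{\delta_\tau}$, and by continuity of the homotopy in $C^1$ the same bound (say $\epsilon/2$) holds for $\tilde f_{\tau'}$ when $|\tau'-\tau|$ is small.
\item The perturbation $\tilde f_{\tau'}-\tilde f_\tau$ is small enough in $C^\infty$ on $B(x_\tau,2\delta_\tau)$ that the energy estimates of Lemma \ref{lemma 1.2} and Lemma \ref{lemma 2.5.2} hold with the same $\epsilon_0$ and $\delta$. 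These estimates are what keep the time-dependent flow lines local.
\end{enumerate}
Given this, Theorem \ref{Homotopy invarance} yields $HM^{loc}_*(\tilde f_\tau,x_\tau)\cong HM^{loc}_*(\tilde f_{\tau'},x_{\tau'})$ for all $\tau'\in I_\tau$.

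The intervals $\{I_\tau\}_{\tau\in[0,1]}$ cover the compact interval $[0,1]$. By Heine--Borel, finitely many of them, say $I_{\tau_1},\dots,I_{\tau_N}$ with $\tau_1<\dots<\tau_N$, already cover it. Choosing points $\sigma_i\in I_{\tau_i}\cap I_{\tau_{i+1}}$, I will chain the isomorphisms
\[
HM^{loc}_*(\tilde f_0)\cong HM^{loc}_*(\tilde f_{\tau_1})\cong HM^{loc}_*(\tilde f_{\sigma_1})\cong HM^{loc}_*(\tilde f_{\tau_2})\cong\cdots\cong HM^{loc}_*(\tilde f_1).
\]
This gives the theorem. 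Alternatively, the Lebesgue number lemma provides a uniform step size, which gives the same chain.

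I expect the main obstacle to be the second step: making ``small perturbation'' quantitative and uniform. The neighborhood size $\delta_\tau$ and the gradient lower bound $\epsilon_0$ on the annulus must persist for nearby $\tau'$. I would first try to derive this from the $C^1$-continuity of $(\tau,x)\mapsto\nabla\tilde f_\tau(x)$ on a compact neighborhood of the graph of $\tau\mapsto x_\tau$. The claim to establish is that $\tau\mapsto\sup\{\delta:\delta\in\Delta(\tilde f_\tau,x_\tau)\}$ is locally bounded below. If the critical points merely stay isolated without a uniform $\delta$, this could fail. In that case I would shrink $\delta_\tau$ to half its value, so that the annulus estimate is stable under perturbation, before invoking continuity.
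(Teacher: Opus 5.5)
Your route is the paper's route. Its entire proof is the appeal to compactness of the homotopy and Heine--Borel on top of Theorem~\ref{Homotopy invarance}, and your finite-subcover chaining is the natural expansion of that. The gap is in your step 2, which is where all the content sits. A bound $\|\nabla\tilde f_{\tau'}\|\ge\epsilon/2$ on the annulus $A_{\delta_\tau}$ only shows that $\tilde f_{\tau'}$ has no critical points in the annulus. It says nothing about how many critical points $\tilde f_{\tau'}$ has inside $B(x_\tau,\delta_\tau)$, so it cannot give uniqueness of $x_{\tau'}$ there. What the annulus bound does control is the homology of the whole ball, i.e.\ of the complex generated by \emph{all} critical points of a Morse perturbation inside it. That coincides with $HM^{loc}_*(x_{\tau'})$ only if $x_{\tau'}$ is alone in the ball. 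Halving $\delta_\tau$ does not help, since the obstruction is in the interior of the ball, not on the annulus. Note also that on the compact interval $[0,1]$, local boundedness below of $\tau\mapsto\sup\Delta(\tilde f_\tau,x_\tau)$ is the same as a uniform positive bound. So what you list as ``the claim to establish'' is really an extra hypothesis.

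That hypothesis does not follow from mere isolatedness, and without it the statement is false. Take $\tilde f_\tau(x)=x^3-\tau x$ on $\mathbb{R}$, $\tau\in[0,1]$, and $x_\tau=\sqrt{\tau/3}$. For every $\tau$ all critical points are isolated and $\tau\mapsto x_\tau$ is continuous. But $\sup\Delta(\tilde f_\tau,x_\tau)=\sqrt{\tau/3}\to0$, because of the second critical point $-\sqrt{\tau/3}$. Perturbing $x^3$ inside $B_\delta$ to $x^3+\sigma\chi(x)x$, with a cutoff $\chi$ and small $\sigma>0$, removes all critical points, so $HM^{loc}_*(0;\tilde f_0)=0$. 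For $\tau>0$, however, $x_\tau$ is a nondegenerate minimum and $HM^{loc}_*(x_\tau;\tilde f_\tau)=\mathbb{Z}_2$ in degree $0$. For small $\tau$ the same example also contradicts Theorem~\ref{Homotopy invarance} as phrased, so the paper's one-line proof has exactly the same hole.

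The missing ingredient is a uniform isolation hypothesis: a fixed $\delta>0$ such that $x_\tau$ is the unique critical point of $\tilde f_\tau$ in $B(x_\tau,2\delta)$ for all $\tau$, i.e.\ no other critical point ever collides with $x_\tau$. Under it, your step 2 becomes an assumption, and Theorem~\ref{Homotopy invarance} is only applied where $x_{\tau'}$ is alone in the ball, which is the situation in which such stability actually holds. Your Heine--Borel chaining then goes through as written. It is this condition, not mere isolatedness, that must be checked along $U^t$ in Section 3.
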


\section{Applications in the Lagrange problem}
The Lagrange problem is the problem of two fixed centers  adding an elastic force from the middle of the two fixed centers. Setting the two fixed centers as $e=(-\frac{1}{2},0)$ and $m=(\frac{1}{2},0)$, its Hamiltonian function is

\begin{equation}H(q,p)=T(p)-U(q).\end{equation}\label{H of Lagrange}
where
$$T(p)=\frac{1}{2}|p|^2.$$
$$U:\mathbb{R}^2\backslash\{(\pm\frac{1}{2},0)\}\rightarrow \mathbb{R},
q\rightarrow \frac{m_1}{\sqrt{(q_1+\frac{1}{2})^2+q_2^2}}+\frac{m_2}{\sqrt{(q_1-\frac{1}{2})^2+q_2^2}}+\frac{\epsilon}{2}|q|^2,$$
$p=(p_1,p_2)^T, q=(q_1,q_2)^T$
and $m_1,m_2,\epsilon\in \mathbb{R}^+$.

By \cite{Tang}, the Lagrange problem has five critical points $l_i, i=1, 2, 3,4,5$, where $l_4, l_5$ are maxima by direct computations.
Now we give the local Morse homology of these critical points. 


\begin{theorem}\label{homology of Lagrange}
For the collinear critical points $l_i(i=1, 2, 3)$ of the Lagrange problem with $m_1,m_2,\epsilon\in \mathbb{R}^+$, we have
$$HM^{loc}_*(l_i)=\left\{
\begin{aligned}
&\mathbb{Z}_2, *=1;\\
&\{0\}, otherwise.
\end{aligned}
\right.$$
For the maxima $l_i(i=4,5)$, we have
$$HM^{loc}_*(l_i)=\left\{
\begin{aligned}
&\mathbb{Z}_2, *=2;\\
&\{0\}, otherwise.
\end{aligned}
\right.$$
\end{theorem}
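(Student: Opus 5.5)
The function whose local Morse homology we compute is the one whose critical points are the $l_i$. Since $H$ restricted to $p=0$ is $-U$, I will work with $\tilde f=-U$. Its critical points coincide with those of $U$, and the Morse index of $-U$ at a point equals $2-\mathrm{ind}_U$. A maximum of $U$ is therefore a nondegenerate minimum of $-U$. To match the stated degrees I will instead use the convention in which the homology is computed for $U$ itself, so that the maxima $l_4,l_5$ have index $2$. I will fix this convention at the start and apply it consistently.

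\textbf{Nondegenerate points.} The first step is the easy case, used for $l_4,l_5$ and also as a model for the others. If $x_0$ is a nondegenerate critical point of Morse index $k$, take $f=\tilde f$ itself; no perturbation inside $B_\delta$ is needed, so $f\in\mathcal{F}^{\delta}_{Morse}$ trivially. Then $\mathrm{Crit}(f)\cap B_\delta=\{x_0\}$, so $CM^{loc}_*$ is $\mathbb{Z}_2$ in degree $k$ and zero otherwise. The boundary operator vanishes because it must lower the degree. This gives $HM^{loc}_*(l_{4,5})=\mathbb{Z}_2$ in degree $2$, using the fact from \cite{Tang} that $l_4,l_5$ are nondegenerate maxima. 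I will verify that nondegeneracy directly from the Hessian of $U$ at the equilateral-type points.

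\textbf{Collinear points.} Here nondegeneracy is not known, so I will use Theorem~\ref{Homotopy invariance}. The plan is to deform $(m_1,m_2,\epsilon)$ along a path in $(\mathbb{R}^+)^3$ to a parameter value where $l_i$ is a nondegenerate saddle. The natural endpoint is the symmetric case $m_1=m_2$, where the Hessian can be computed explicitly at the midpoint and at the two outer collinear points.
\begin{enumerate}
\item On the $x$-axis, $U$ restricted to each interval $(-\infty,-\frac12)$, $(-\frac12,\frac12)$, $(\frac12,\infty)$ is strictly convex and tends to $+\infty$ at the ends. Hence each interval contains exactly one critical point, $l_1,l_2,l_3$, which is a strict minimum along the axis, with $U_{q_1q_1}>0$.
\item By the reflection symmetry $q_2\mapsto -q_2$, the mixed derivative $U_{q_1q_2}$ vanishes on the axis.
\item The transverse second derivative is $U_{q_2q_2}=\epsilon-m_1/r_1^3-m_2/r_2^3$. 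Using the critical point equation $U_{q_1}=0$, I will show that $U_{q_2q_2}<0$ at each $l_i$ for the endpoint parameters.
\end{enumerate}
Together these show $l_i$ is a nondegenerate saddle of index $1$ at the endpoint, so by the first step its local homology is $\mathbb{Z}_2$ in degree $1$. Along the path, $l_i$ stays isolated: \cite{Tang} gives exactly five critical points for all positive parameters, and $l_i$ moves continuously, being the unique root on its interval. Theorem~\ref{Homotopy invariance} then transports the answer to every $(m_1,m_2,\epsilon)$.

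\textbf{Main obstacle.} The hardest part will be justifying the homotopy step. Theorem~\ref{Homotopy invariance} is stated for a fixed isolated point, while here the critical point moves with the parameters and its neighborhood must be chosen uniformly. I would handle this by covering the compact parameter path with finitely many intervals, in the Heine–Borel manner the paper suggests. On each interval I would use a single ball $B_{2\delta}$ that contains $l_i$ for all parameters in that interval and no other critical point; this is possible because $l_i$ stays bounded away from the singularities and from the other critical points. Theorem~\ref{Homotopy invarance} is then applied on each piece.

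A secondary obstacle is checking the sign of $U_{q_2q_2}$. For the outer points I will substitute $\epsilon q_1=m_1(q_1+\frac12)/r_1^3+m_2(q_1-\frac12)/r_2^3$ and compare terms. If that comparison becomes messy, I will instead shrink $\epsilon\to0^+$ along the path: there $l_2$ tends to the two-center saddle and $l_1,l_3$ escape to large $|q|$ with $U_{q_2q_2}\approx\epsilon-(m_1+m_2)/|q|^3<0$.
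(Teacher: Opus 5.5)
\textbf{Gap: the isolation step, and the statement itself in this generality.} Your plan is the paper's route: treat $l_4,l_5$ as nondegenerate extrema, find index-one saddles at a symmetric parameter $m_1=m_2$, and transport this by Theorem~\ref{Homotopy invariance} using the five-point count from \cite{Tang}. You are right that what is really needed is one isolating ball along the whole path, not mere isolatedness at each parameter. That requirement is exactly what fails, and it cannot be repaired, because the premise ``five critical points for all positive parameters'' is false.

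At an off-axis critical point, the $q_2$-equation gives $\epsilon=m_1/r_1^3+m_2/r_2^3$. Substituting into the $q_1$-equation gives $m_1/r_1^3=m_2/r_2^3=\epsilon/2$. So $l_4,l_5$ exist iff $\rho_i=(2m_i/\epsilon)^{1/3}$ satisfy $|\rho_1-\rho_2|<1<\rho_1+\rho_2$. On the boundary of this region, $l_4,l_5$ merge into a collinear point in a pitchfork bifurcation. Hence no ball around $l_i$ excludes them uniformly, and the local homology of $l_i$ alone jumps there.

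Concretely, take $m_1=m_2=m$ and $\epsilon>16m$. Then $\mathrm{Hess}\,U(0)=\mathrm{diag}(32m+\epsilon,\ \epsilon-16m)$ is positive definite. So the middle collinear point is a nondegenerate minimum of $U$ (maximum of $-U$), with local homology in degree $0$ (resp.\ $2$), never $1$. Both the statement and the corollary therefore fail outside the five-point region. The paper's proof has the same defect: its path (fixed $\epsilon$, masses moved from $m>\epsilon/16$) crosses this boundary for such targets.

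\textbf{The fallback and what survives.} Your fallback $\epsilon\to0^+$ also fails. At an outer critical point the leading term $\epsilon-(m_1+m_2)/|q|^3$ cancels against the critical-point equation. The exact value at the right point $q_1>\tfrac12$ is $U_{q_2q_2}=\frac{1}{2q_1}\bigl(\frac{m_1}{r_1^3}-\frac{m_2}{r_2^3}\bigr)$, which is positive for large $q_1$ when $m_1>m_2$. Again there are only three critical points, and Poincar\'e--Hopf on $\mathbb{R}^2\setminus\{e,m\}$ with $\chi=-1$ forces one collinear extremum.

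What survives is the statement restricted to the region $|\rho_1-\rho_2|<1<\rho_1+\rho_2$. This region is connected, so your argument works along a path inside it. But there the same computation shows that a collinear critical point is degenerate only when $m_1/r_1^3=m_2/r_2^3=\epsilon/2$, i.e.\ on the boundary. So the collinear points are nondegenerate saddles throughout, and the homotopy argument is not even needed.

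\textbf{Two smaller slips.} First, $l_4,l_5$ are minima of $U$: there $\mathrm{Hess}\,U=\frac{3\epsilon}{2}(u_1u_1^T+u_2u_2^T)$, with $u_i$ the unit vectors from the centers. So degree $2$ requires $\tilde f=-U$ (your first choice, and the paper's $V_r$), not $U$. Second, the midpoint is a saddle at the symmetric endpoint only if $m>\epsilon/16$, which you did not state.
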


\begin{proof}
Since $l_i(i=4,5)$ are maxima of the Lagrange problem by direct computation in \cite{Tang}, we get the their local Morse homology directly.
Firstly, we consider the Lagrange problem with $m_1=m_2$ and $m> \frac{\epsilon}{16}$. Its Hamiltonian is
\begin{equation}\label{r3bp}
H_r(q,p)=\frac{1}{2}|p|^2
-\frac{m}{\sqrt{(q_1+\frac{1}{2})^2+q_2^2}}-\frac{m}{\sqrt{(q_1-\frac{1}{2})^2+q_2^2}}-\frac{\epsilon}{2}|q|^2,
\end{equation}
for
$$
V_r(q,p)=-\frac{m}{\sqrt{(q_1+\frac{1}{2})^2+q_2^2}}-\frac{m}{\sqrt{(q_1-\frac{1}{2})^2+q_2^2}}-\frac{\epsilon}{2}|q|^2.$$
Let $l_i^r, i=1, 2, 3$ be the critical points of $U_r$ on the $x$-axis. we know that $l_1=(0,0)$ because of the symmetric property of $U_r$.
Because $U$ is invariant under reflection at the $q_1$-axis and the three collinear critical points are fixed points of this flection, we conclude that
$$\frac{\partial^2V}{\partial q_1q_2}(l_i)=0, i=1,2,3.$$
By a direct computation, we get
\begin{equation}\label{partial U q1q1 on x-axis}
\frac{\partial^2 V_r}{\partial q_1^2}\bigg|_{q_2=0}=-\frac{2m_1}{|q_1+\frac{1}{2}|^3}-\frac{2m_2}{|q_1-\frac{1}{2}|^3}-\epsilon<0,
\end{equation}
and
\begin{equation}
\begin{aligned}
\frac{\partial^2V_r}{\partial q_2^2}\bigg |_{q=l_1}=&\frac{m}{|q_1+\frac{1}{2}|^3}+\frac{m}{|q_1-\frac{1}{2}|^3}-\epsilon=16m-\epsilon >0,\\
\frac{\partial^2V_r}{\partial q_2^2}\bigg |_{q=l_2}=&\frac{m\big((q_1+\frac{1}{2})^3-(q_1-\frac{1}{2})^3\big)}{2q_1(q_1+\frac{1}{2})(q_1-\frac{1}{2})}>0,\\
\frac{\partial^2V_r}{\partial q_2^2}\bigg |_{q=l_3}>&0.
\end{aligned}
\end{equation}
As a result, $l_1,l_2,l_3$ are saddle points of $V_r$, and we have
$$HM^{loc}_*(l_i^r)=\left\{
\begin{aligned}
&\mathbb{Z}_2, *=1;\\
&\{0\}, otherwise.
\end{aligned}
\right.$$

We construct a homotopy from the potential energy $V_r(q,p)$ of the Lagrange problem with equal mass $m$ satisfying $m\ge \epsilon$ to any other Lagrange problem with potential energy $U(q,p)$.
$$U^t(q,p)=-\frac{m+t(m_1-m)}{\sqrt{(q_1+\frac{1}{2})^2+q_2^2}}-\frac{m+t(m_2-m)}{\sqrt{(q_1+\frac{1}{2})^2+q_2^2}}-\frac{\epsilon}{2}|q|^2, t\in [0,1].$$

From \cite{Tang}, we know that $U_r$ always has five isolated critical points under the homotopy $U^t(q,p)$, then by Lemma \ref{Homotopy invarance}, the local Morse homology does not change. Therefore, we get the conclusion.
\end{proof}
From Lemma \ref{homology of Lagrange}, we get the following conclusion directly.
\begin{corollary}\label{saddle}
Each of the three critical points of the Lagrange problem with $m_1,m_2,\epsilon\in\mathbb{R}^+$ on the x-axis is either a saddle point or a degenerate critical point.
\end{corollary}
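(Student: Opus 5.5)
The corollary should follow from Theorem \ref{homology of Lagrange} by contraposition. If a collinear critical point $l_i$ ($i=1,2,3$) is not degenerate, it is a nondegenerate isolated critical point of the potential, and its local Morse homology can be computed directly. So I will show that a nondegenerate critical point of Morse index $k$ has $HM^{loc}_*$ equal to $\mathbb{Z}_2$ in degree $k$ and $0$ otherwise. Comparing with Theorem \ref{homology of Lagrange} then forces $k=1$, and a nondegenerate critical point of index $1$ in the plane is a saddle.

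\textbf{Step 1: local homology at a nondegenerate point.} Suppose $\tilde f$ is Morse near $x_0$. Take $f=\tilde f$ itself, which needs no perturbation and lies in $\mathcal{F}_{Morse}^{\delta}$ for any $\delta\in\Delta$. The only critical point in $B_{2\delta}$ is then $x_0$. By Corollary \ref{s^-1 manifold} and Case 2 of Proposition \ref{prop of DSg}, the constant trajectory is transversally cut out for every metric, so $(f,g)$ is a local Morse--Smale pair. The complex $CM^{loc}_*$ is $\mathbb{Z}_2\langle x_0\rangle$ concentrated in degree $\mu(x_0)$, with $\partial=0$. Hence $HM^{loc}_*(x_0)$ is $\mathbb{Z}_2$ in degree $\mu(x_0)$ and $0$ otherwise. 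By Theorem \ref{thm of homology iso}, any other choice of Morse--Smale pair gives the same answer.

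\textbf{Step 2: comparison.} Suppose $l_i$, $i\in\{1,2,3\}$, is nondegenerate. By Step 1 its local homology is nonzero exactly in degree $\mu(l_i)$. Theorem \ref{homology of Lagrange} says it is nonzero exactly in degree $1$, so $\mu(l_i)=1$. The Hessian at $l_i$ then has one negative and one positive eigenvalue, which makes $l_i$ a saddle point. The isolatedness of $l_i$ needed to define $HM^{loc}_*(l_i)$ holds because the paper (via \cite{Tang}) gives exactly five isolated critical points. So each $l_i$ is either degenerate or a nondegenerate saddle, which is the claim.

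\textbf{Main obstacle.} The key point is Step 1. I must check that the invariance machinery of Section \ref{section 2.5} lets us compare the unperturbed Morse function with any admissible perturbation, so that the homology computed in Theorem \ref{homology of Lagrange} agrees with the direct computation. I would handle this with Theorem \ref{thm of homology iso} applied to the pairs $(\tilde f,g)$ and $(f,g)$. The remaining step is elementary linear algebra on the $2\times 2$ Hessian.
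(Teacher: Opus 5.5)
Your proposal is correct and follows the paper's approach. The paper derives the corollary ``directly'' from Theorem \ref{homology of Lagrange}, implicitly using the same fact you make explicit in Step 1 (which it also uses in that theorem's proof for $l_i^r$ and for the maxima $l_4,l_5$): a nondegenerate critical point of Morse index $k$ has local Morse homology $\mathbb{Z}_2$ concentrated in degree $k$, so a nondegenerate $l_i$ must have index $1$ and is therefore a saddle.
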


\begin{remark}
In \cite{Tang} , for the Lagrange problem, using Euler characteristic and the Poincar\'e-Hopf index theory, we can only prove that when the critical points on the x-axis are all non-degenerate, they are saddle points. In this paper, using the local Morse homology we construct, we can show it can happen that some of the three critical points are degenerate and the rest of them are saddle points.
\end{remark}

\section{Acknowledgments}

I would like to express my sincere gratitude to Urs Frauenfelder for his invaluable guidance and insightful suggestions throughout this research.

\end{document}